\documentclass[11pt,reqno]{amsart}

\usepackage{latexsym,amsmath,amsthm,amsxtra,xypic}

\usepackage[T1]{fontenc}
\usepackage{lmodern}
\usepackage{microtype}
\usepackage{mathtools}
\usepackage{amssymb}
\usepackage{mathrsfs}
\usepackage{booktabs}
\usepackage{enumitem}
\usepackage{xcolor}
\usepackage[colorlinks=true,linkcolor=blue!55!black,
  citecolor=green!45!black,urlcolor=blue!60!black]{hyperref}
\usepackage[nameinlink,capitalise,noabbrev]{cleveref}
\usepackage[margin=0.9in]{geometry}

\hypersetup{
  pdftitle={Extending Structures for n-Lie Algebras},
  pdfauthor={Tao Zhang},
  pdfsubject={Action-form and unrestricted unified products, non-abelian
    extensions, matched pairs, complements, and explicit examples for n-Lie algebras},
  pdfkeywords={n-Lie algebra, unified product, extending structure,
    action form, crossed product, matched pair, deformation map, explicit example}
}

\allowdisplaybreaks
\numberwithin{equation}{section}

\newtheorem{theorem}{Theorem}[section]
\newtheorem{proposition}[theorem]{Proposition}
\newtheorem{lemma}[theorem]{Lemma}
\newtheorem{corollary}[theorem]{Corollary}
\newtheorem{definition}[theorem]{Definition}
\newtheorem{example}[theorem]{Example}
\newtheorem{remark}[theorem]{Remark}

\renewcommand{\L}{\mathcal{L}}
\newcommand{\End}{\mathrm{End}}
\newcommand{\lam}{\lambda}
\newcommand{\E}{\mathrm{E}}
\newcommand{\Ext}{\mathrm{Ext}}
\newcommand{\si}{\sigma}
\newcommand{\hg}{\hat{\mathfrak g}}

\newcommand{\g}{\mathfrak{g}}
\newcommand{\h}{\mathfrak{h}}
\newcommand{\Extd}{\operatorname{Extd}}
\newcommand{\Hom}{\operatorname{Hom}}
\newcommand{\GL}{\operatorname{GL}}
\newcommand{\id}{\operatorname{id}}

\newcommand{\Ker}{\operatorname{Ker}}
\newcommand{\pr}{\operatorname{pr}}
\newcommand{\ad}{\operatorname{ad}}
\newcommand{\natprod}{\mathbin{\natural}}
\newcommand{\bowprod}{\mathbin{\bowtie}}

\title[Extending Structures for $n$-Lie Algebras]
{Extending Structures for $n$-Lie Algebras}
\author{Tao Zhang}
\address{School of Mathematics and Statistics (School of Cryptology), Henan Normal University,
Xinxiang 453007, P. R. China}
\email{zhangtao@htu.edu.cn}
\subjclass[2010]{Primary 17A42; Secondary 17B56, 17A36}
\keywords{$n$-Lie algebra, extending structure, unified product, non-abelian extension, matched pair, deformation}

\begin{document}

\begin{abstract}
  Motivated by the Casas-Loday-Pirashvili maps concerning representations of $n$-Leibniz algebras, we give an intrinsic characterization of the cohomology of $n$-Lie algebras with coefficient in a representation space $V$.
  We investigated the cohomology theory of $n$-Lie algebras in three different ways:
  cohomology theory of Leibniz algebras, infinitesimal deformation and abelian extension.
  In the main part of this paper, we solve the extending problem for $n$-Lie algebras.
  A necessary and sufficient unified-product criterion is obtained.
  The case of crossed products, sparse non-abelian extensions, matched pairs are studied as special cases.
  We also investigate  the  factorizations, deformation maps, and complements problem for $n$-Lie algebras.
  An appendix removes the general non-reduced unified product with all intermediate mixed components.
\end{abstract}

\maketitle

\section{Introduction}

The passage from binary Lie brackets to higher skew brackets originates
in Nambu's generalization of Hamiltonian mechanics~\cite{Nambu} and in
Filippov's algebraic formulation of \(n\)-Lie algebras~\cite{Filippov}.
The subject now links higher-order dynamics, Nambu--Poisson geometry,
multilinear algebra, and higher gauge models; see
\cite{Takhtajan,deAzcarragaIzquierdo} for broad accounts of these
connections.  The structural theory was developed through the study of
representations, ideals, simple algebras, and cohomology
\cite{Kasymov,Ling,DaletskiiTakhtajan,Rotkiewicz}.  These developments
make extension problems indispensable: extensions construct new
algebras from known ones, while their equivalence classes measure the
failure of a prescribed subalgebra or quotient to determine the ambient
algebra uniquely.  They also provide the natural setting for cocycles,
factorizations, and deformation problems; compare
\cite{AD,AfiBasdouri,Arfa}.
Matched-pair and bicrossed-product in factorization theory explain why the same framework also controls complementary subalgebras, see \cite{MajidMatched,MajidPhysics}.

On the other hand, the algebraic theory of $n$-Lie algebras have been studied by many authors, see \cite{BLW,DaletskiiTakhtajan,Ling,Gau,Kasymov,Takhtajan}.
Specially, the (co)homology theory for $n$-Lie algebras was introduced by L. Takhtajan in \cite{Takhtajan,DaletskiiTakhtajan} and by P. Gautheron in \cite{Gau},
which is adapted to the study of formal differential geometry and of formal deformations of Nambu structures.
Abelian extensions and non-abelian extensions of $n$-Lie algebras were studied in \cite{BLW} and \cite{AfiBasdouri,SongMakhloufTang}.
Deformations and Nijenhuis operators on $n$-Lie algebras were investigated in \cite{LiuSheng}.
The calculation of generating indices of \(n\)-Lie algebras was given in \cite{BaiHanBai}.

A central feature of the cohomological approach is the fundamental Leibniz algebra \(\mathcal L=\bigwedge^{n-1}\mathfrak g\) associated with an \(n\)-Lie algebra \(\mathfrak g\). The relevance of Leibniz algebras to \(n\)-ary cohomology is already present in \cite{DaletskiiTakhtajan} and in the representation-theoretic and cohomological framework of Casas-Loday-Pirashvili \cite{CasasLodayPirashvili}.
The intrinsic connection between the reprensentation of $n$-Lie algebras and the reprensentation of Leibniz algebras for the case of 3-Lie algebras were done in \cite{Zhang,ZhangExtending}.

Building on these foundations, in the first part of this paper, we give a detailed characterization of the Leibniz bimodule structure on \(\Hom(\mathfrak g,V)\) induced by a linear map \(\rho:\bigwedge^{n-1}\mathfrak g\to\End(V)\). We prove that the prescribed left and right actions satisfy the Leibniz bimodule identities precisely when \(\rho\) satisfies the two \(n\)-Lie representation identities.
We then describe the associated cohomology through the fundamental-object cochain complex. Under the natural identification
$$
\Hom\!\left(\mathcal L^{\otimes p}\otimes\mathfrak g,V\right)
\cong
\Hom\!\left(\mathcal L^{\otimes p},\Hom(\mathfrak g,V)\right),
$$
we identify its differential with the Leibniz coboundary determined by these two actions. The contribution is thus an explicit representation-level criterion and a cochain-level realization of the established relationship, rather than the introduction of a new cohomology theory.
In fact, we also study the cohomology of $n$-Lie algebras in three different ways:
by cohomology theory of the derived Leibniz algebras $\L=\wedge^{n-1} \g$, by infinitesimal deformation theory of $n$-Lie algebras,
and by abelian extension of $n$-Lie algebras.
It is proved that a 1-parameter infinitesimal deformation of a $n$-Lie algebra $\mathfrak{g}$  corresponds to a 1-cocycle of
  $\mathfrak{g}$ with the coefficients in the adjoint representation.
  We show that abelian extensions can be classified by the first cohomology group.


Let \(E\) be a vector space containing a fixed \(n\)-Lie algebra
\(\g\).  The extending-structures problem asks for all \(n\)-Lie
brackets on \(E\) for which \(\g\) remains a subalgebra, together with
an effective equivalence relation on the resulting brackets.  For Lie algebras this problem is encoded by the unified product of Agore and
Militaru~\cite{AgoreMilitaru}.
They also introduced and developed for various other varieties of algebras in ~\cite{AgoreMilitaru1,AgoreMilitaru2,AgoreMilitaru4,AgoreMilitaru5}.
The extending problem for \(3\)-Lie algebras was established  in~\cite{ZhangExtending}.
The extending-structures problem generalized the abelian extensions and non-abelian extensions problems since in the exact sequence of vector spaces
\begin{equation}
0\longrightarrow\g\xrightarrow{\,i\,}E
 \xrightarrow{\,p\,}V\longrightarrow0.
\label{eq:extproblem}
\end{equation}
we only need $\g$ to be a subalgebra but $V$ is not, and thus this is not necessarily an exact sequence of \(n\)-Lie algebras.

The rest of this paper is denoted to study extending structures for $n$-Lie algebras.
But it is a hard problem for the \(n\)-ary case since it is essentially different from the case of Lie algebras and most complicated when $n>3$.
The general \(n\)-ary problem has a combinatorial feature absent in the binary and ternary cases.
We resolve this problem on the basis of the following observation.
After choosing \(E=\g\oplus V\), we see that an arbitrary
bracket has two direct-sum components on every space
\begin{equation}
 \bigwedge\nolimits^kV\otimes
 \bigwedge\nolimits^{n-k}\g,
 \qquad 1\leq k\leq n.
\label{eq:intro-mixed-degrees}
\end{equation}
Thus, for \(n>3\), the degrees \(2,\ldots,n-2\) cannot be recovered
from the two extreme mixed degrees.  This observation leads to two
complementary objectives.  First, one needs a tractable action-form
theory for the important class in which the intermediate mixed brackets
vanish.  Second, one needs a complete construction retaining all mixed
degrees, both to delimit the action-form theory and to solve the
unrestricted extending problem.

The main body of this paper addresses the first objective.  Its datum is written as
\[
 \rho_{\g}:\bigwedge^{n-1}\g\to\operatorname{End}(V),
 \qquad
 \rho_V:\bigwedge^{n-1}V\to\operatorname{End}(\g),
\]
together with
\[
 \vartheta_V:V\to\Hom(\bigwedge^{n-1}\g,\g),
 \qquad
 \vartheta_{\g}:\g\to\Hom(\bigwedge^{n-1}V,V),
\]
a twisting map \(\omega:\bigwedge^nV\to\g\), and an alternating
\(V\)-valued \(n\)-operation.  This notation is representation-aligned:
\(\rho_{\g}\) and \(\rho_V\) become the canonical representations of
the two factors whenever the corresponding factor is a subalgebra,
whereas the \(\vartheta\)-maps retain the other extreme components.

The novelty of this approach is not a formal replacement of ternary
variables by \(n\)-tuples.  It separates two mathematically different
levels of the problem: a representation-oriented action form, for which
the Filippov signs remain computable by hand, and an unrestricted form,
which retains the middle mixed degrees that appear only when \(n>3\).
This separation makes it possible to state a concise criterion in the
main text without losing the realization and classification results for
arbitrary extending brackets.

The principal contributions of the paper are the following.
\begin{enumerate}[label=\textup{(\roman*)},leftmargin=2.5em]
\item We find  the relationship between  the Leibniz bimodule structure on \(\Hom(\mathfrak g,V)\) and a linear map \(\rho:\bigwedge^{n-1}\mathfrak g\to\End(V)\) satisfies the two \(n\)-Lie representation identities. An intrinsic characterization of the cohomology of $n$-Lie algebras with coefficient in a representation space $V$ is obtained by using this Leibniz bimodule.

\item We derive a necessary and sufficient unified-product criterion by
substituting arbitrary elements directly into the defining Filippov
identity.  Both direct-sum components and every positional sign are
displayed explicitly.

\item We identify exactly how representations and cocycles arise from
the action datum.  In particular, the abelian specialization recovers
the standard semidirect product and the ordinary one-cocycle identity with respect to a representation.
Also every extreme matched pair gives representations of \(\g\) and \(V\) on each other.

\item We develop the theory of crossed products, sparse non-abelian extensions, matched pairs, extreme factorizations, deformation maps, and the classification of complements within the framework of unified-product.

\item In \Cref{app:full}, we solve the unrestricted problem.  Two components are introduced at every degree in
\eqref{eq:intro-mixed-degrees}; the resulting full unified product is characterized by a direct Filippov calculation and is shown to realize
and classify every extending structure of \(\g\) through \(V\).
\end{enumerate}

The combination of the two theories is useful for both construction and classification of $n$-Lie algebras.  The action form keeps concrete calculations readable
and exposes the representation-theoretic content.  The full datum in the
appendix prevents this simplification from being mistaken for a complete
parametrization when \(n>3\).  In particular, it supplies a precise test
for whether an arbitrary extension admits an extreme-degree complement,
and it makes the ternary case transparent: when \(n=3\), there are no
intermediate degrees, so the action form is already the full six-map unified product as in \cite{ZhangExtending}, but generalized all the results in that paper.

The paper is organized as follows.  \Cref{sec:prelim} recalls the
defining Filippov identity and representations. We explain the cohomology theory of $n$-Lie algebras in three different ways:
  cohomology theory of Leibniz algebras, infinitesimal deformation and abelian extension.
\Cref{sec:unified} defines the action-form unified product, proves its full
criterion directly, and relates its maps to representations and
cocycles.  \Cref{sec:classification} gives the corresponding sparse
realization and equivalence theory.  \Cref{sec:crossed} treats
crossed products and sparse non-abelian extensions, including a direct computation.  \Cref{sec:matched} develops
matched pairs and extreme factorizations.  Finally, \Cref{sec:complements} classifies complements by an explicit four-action deformation equation.
\Cref{app:full} gives the general non-extreme extending datum, unified-product criterion, realization theorem, and classification theorem.

Throughout,  all vector spaces are over a field \( K \) of characteristic zero.  Every map is \( K \)-linear.  Empty
lists are omitted and a sum with no indices is zero. We assume $n\geq 2$ for $n$-Lie algebras.

\section{Notes on $n$-Lie algebras and cohomology theory}\label{sec:prelim}

In this section, we recall some facts about classical cohomology theory for $n$-Lie algebras.
Some concepts and notations such as representations and cocylcles will be used in the following sections.

\begin{definition}\label{def:nlie}
An \emph{\(n\)-Lie algebra} is a vector space \({\g}\) with an alternating
map
\[
 [-,\ldots,-]_{\g}:\bigwedge\nolimits^n{\g}\longrightarrow {\g}
\]
such that, for all \(x_1,\ldots,x_{n-1},y_1,\ldots,y_n\in {\g}\),
\begin{align}
 &[x_1,\ldots,x_{n-1},[y_1,\ldots,y_n]_{\g}]_{\g}
 \notag\\
 &\quad =
 \sum_{i=1}^{n}
 [y_1,\ldots,y_{i-1},
   [x_1,\ldots,x_{n-1},y_i]_{\g},
   y_{i+1},\ldots,y_n]_{\g}.
\label{eq:FI}
\end{align}
\end{definition}

For \(X=(x_1,\ldots,x_{n-1})\in\wedge^{n-1} \g, y_i\in {\g}\), set
\[
 \ad_X(y_i)=[x_1,\ldots,x_{n-1},y_i]_{\g}.
\]
Thus \eqref{eq:FI} says that \(\ad_X\) is a derivation:
\[
 \ad_X([y_1,\ldots,y_n]_{\g}) =
 \sum_{i=1}^{n} [y_1,\ldots,y_{i-1},\ad_X(y_i),  y_{i+1},\ldots,y_n]_{\g}.
\]

Put $\L:=\wedge^{n-1} \g$, which is called fundamental set.
The elements in $\L$ are called fundamental objects.
Define the fundamental product on fundamental object by
\begin{eqnarray}\label{eq:fundamental}
X\circ Y=\sum_{i=1}^{n-1}(y_1,\cdots,[x_1,\cdots,x_{n-1},y_i],\cdots,y_{n-1}).
\end{eqnarray}
In \cite{DaletskiiTakhtajan}, the authors  proved that $\L$ is a Leibniz algebra  satisfying the following Leibniz rule
$$X\circ (Y\circ Z)= (X\circ Y)\circ Z+Y\circ (X\circ Z),$$
and
$$\ad_X\ad_Y (w)-\ad_Y\ad_X (w)=\ad(X\circ Y) (w),$$ 
for all $X,Y,Z\in\L, w\in \g$, i.e. $\ad: \L \to \End(\g)$ is a homomorphism of Leibniz algebras.

Recall that for a Leibniz algebra $\L$, a representation is a vector space $M$
together with two bilinear maps
$$[\cdot,\cdot]_L:\L\times {M}\to {M} \,\,\, \text{and}\,\,\,  [\cdot,\cdot]_R: {M}\times \L\to {M},$$
satisfying the following three axioms
\begin{itemize}
\item[$\bullet$] {\rm(LLM)}\quad  $[X\circ Y, m]_L=[X, [Y, m]_L]_L-[Y,[X, m]_L]_L$,
\item[$\bullet$] {\rm(LML)}\quad  $[m, X\circ Y]_R=[[m, X]_R, Y]_R+[X, [m, Y]_R]_L$,
\item[$\bullet$] {\rm(MLL)}\quad  $[m, X\circ Y]_R=[X,[m, Y]_R]_L-[[X,m]_L, Y]_R,$
\end{itemize}
for all  $X,Y\in\L, m\in {M}$.

By (LML) and (MLL) we also have
\begin{itemize}
\item[$\bullet$] {\rm(MMM)}\quad  $[[m, X]_R, Y]_R+[[X,m]_L, Y]_R=0.$
\end{itemize}
In fact, assume (LLM), one of (LML),(MLL),(MMM) can be derived from the other two.

Given an $n$-Lie algebra $\g$ and a vector space $ {V}$, we define the left action and right action of $\L$ on $\Hom(\g, {V})$ by the following maps
$$[\cdot,\cdot]_L:\L \otimes \Hom(\g, {V})\to \Hom(\g, {V})$$
$$[\cdot,\cdot]_R: \Hom(\g, {V})\otimes \L \to \Hom(\g, {V})$$
by
\begin{eqnarray}
\label{eq:leibniz01}{[(x_1,\cdots,x_{n-1}),\phi]_L}(x_n)&=&\rho(x_1,\cdots,x_{n-1})\phi(x_n)-\phi([x_1,\cdots,x_{n-1},x_n]),\\
\label{eq:leibniz02}{[\phi,(x_1,\cdots,x_{n-1})]_R}(x_n)&=&\phi([x_1,\cdots,x_n])-\sum_{i=1}^n(-1)^{n-i}\rho(x_1,\cdots, \hat{x_i},\cdots, x_n)\phi(x_i),
\end{eqnarray}
for all $\phi\in \Hom(\g, {V}), x_i\in \g$, where $\rho$ is a map from $\L$ to $\End(V)$.
The authors in \cite{CasasLodayPirashvili} give similar maps when $\g$ is an $n$-Leibniz algebra, our maps are inspired by it.

\begin{theorem}\label{thm:rep}
Let $\g$ be an $n$-Lie algebra. Then $\Hom(\g, {V})$ equipped with the above two maps
$[\cdot,\cdot]_L$ and $[\cdot,\cdot]_R$ is a representation of Leibniz algebra $\L$
if and only if the following two conditions are satisfied for $\rho$, $\forall x_i, y_i \in \g$,
\begin{itemize}
\item[$\bullet$]{\rm(R1)}\quad $[\rho(x_1,\cdots,x_{n-1}),\rho(y_1,\cdots,y_{n-1})]=\rho((x_1,\cdots,x_{n-1})\circ (y_1,\cdots,y_{n-1}))$,
\item[$\bullet$]{\rm(R2)}\quad $\rho(x_1,\cdots,x_{n-2},[y_1,\cdots,y_{n}])=$
            ${\sum_{i=1}^n}\, (-1)^{n-i}\rho(y_1,\cdots,\hat{y_i}\cdots, y_{n}) \rho(x_1,\cdots,x_{n-2},y_i)$.
\end{itemize}
\end{theorem}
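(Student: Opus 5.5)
The plan is to check the Leibniz representation axioms one at a time, evaluating both sides on an arbitrary $w\in\g$ (or $x_n,y_n\in\g$). The idea is that each axiom collapses to one of (R1), (R2) once the terms in which $\rho$ does not appear are cancelled using the Daletskii--Takhtajan facts quoted above. These are that $\L$ is Leibniz and that $\ad_X\ad_Y-\ad_Y\ad_X=\ad_{X\circ Y}$. By the remark preceding the theorem, given (LLM) it suffices to establish (MMM) together with one of (LML) or (MLL). I will therefore organise the proof as (LLM) $\Leftrightarrow$ (R1), then (MMM) $\Leftrightarrow$ (R2) given (R1), then (LML) follows from (R1) and (R2).

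\emph{Step 1 (LLM).} Expand $[X,[Y,\phi]_L]_L(w)-[Y,[X,\phi]_L]_L(w)$ using \eqref{eq:leibniz01}. This gives the terms $\rho(X)\rho(Y)\phi(w)-\rho(Y)\rho(X)\phi(w)$ and mixed terms of the form $\rho(X)\phi(\ad_Y w)$ and $\rho(Y)\phi(\ad_X w)$. These mixed terms cancel in the antisymmetrisation. There remain terms $\phi(\ad_Y\ad_Xw)-\phi(\ad_X\ad_Yw)=-\phi(\ad_{X\circ Y}w)$, which match the $\phi$-part of $[X\circ Y,\phi]_L(w)$. Hence (LLM) holds iff $[\rho(X),\rho(Y)]\phi(w)=\rho(X\circ Y)\phi(w)$ for all $\phi,w$. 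Since rank-one maps $\phi$ with $\phi(w)=v$ arbitrary exist, this is exactly (R1).

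\emph{Step 2 (MMM).} In $[\phi,X]_R+[X,\phi]_L$ the terms $\pm\phi([x_1,\dots,x_n])$ cancel. The $i=n$ summand of \eqref{eq:leibniz02} also cancels against $\rho(X)\phi(x_n)$. This leaves
\[
\psi(x_n)=-\sum_{i=1}^{n-1}(-1)^{n-i}\rho(x_1,\dots,\hat{x_i},\dots,x_n)\phi(x_i).
\]
Then $[\psi,Y]_R(y_n)=\psi([y_1,\dots,y_n])-\sum_j(-1)^{n-j}\rho(y_1,\dots,\hat{y_j},\dots,y_n)\psi(y_j)$. The first term produces $\rho(x_1,\dots,\hat{x_i},\dots,x_{n-1},[y_1,\dots,y_n])\phi(x_i)$. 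In the second term, the $\psi(y_j)$ with $j<n$ involve $\phi(y_j)$, while $j=n$ involves $\phi(x_i)$ paired with $\rho(\hat Y)\rho(\dots,y_n)$. Grouping by the argument of $\phi$, the coefficient of $\phi(x_i)$ is precisely the difference of the two sides of (R2), with $(x_1,\dots,\hat{x_i},\dots,x_{n-1})$ in the role of the $n-2$ fixed entries. The terms with $\phi(y_j)$, $j<n$, should cancel in pairs or vanish by the alternating property of $\rho$ and (R1). So (R2) implies (MMM). For the converse, take $x_1,\dots,x_{n-1}$ linearly independent, or argue by multilinearity in the general case, and choose $\phi$ with $\phi(x_i)=v$ and $\phi$ vanishing on the other relevant vectors. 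This isolates one coefficient and yields (R2).

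\emph{Step 3 (LML).} Expand both sides of (LML) via \eqref{eq:fundamental}. The $\phi([\cdots])$-terms match by the Filippov identity \eqref{eq:FI}. The terms quadratic in $\rho$ match using (R1) to commute $\rho(X)$ past the $\rho(\hat{\ })$ factors, and (R2) to rewrite $\rho(\dots,[x_1,\dots,x_{n-1},y_i],\dots)$. The main obstacle is the sign bookkeeping in Steps 2--3, specifically:
\begin{itemize}
\item tracking the $(-1)^{n-i}$ signs when an element is moved into the last slot;
\item carrying out the isolation argument for the converse of (R2) cleanly when the $x_i$ and $y_j$ are not independent.
\end{itemize}
I would first try the case of independent vectors and then extend by multilinearity and density of such configurations in characteristic zero.
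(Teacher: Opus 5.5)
Your overall architecture is the paper's: (LLM)$\Leftrightarrow$(R1), (MMM)$\Leftrightarrow$(R2), then one further axiom. The only structural difference is that you verify (LML) where the paper verifies (MLL). Since (LML) minus (MLL) is exactly (MMM), either choice closes the argument, and your Step 1 is correct as written. However, Step 2 is mis-computed, and that error is what produces the obstacles you list. Step 3 also misplaces (R2).

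\textbf{Step 2.} The map $\psi=[\phi,X]_R+[X,\phi]_L$ is $w\mapsto-\sum_{i=1}^{n-1}(-1)^{n-i}\rho(x_1,\ldots,\widehat{x_i},\ldots,x_{n-1},w)\phi(x_i)$, so $\phi$ is only ever evaluated at $x_1,\ldots,x_{n-1}$. Consequently $\psi(y_j)$, for \emph{every} $j=1,\ldots,n$ and not just $j=n$, is a combination of $\phi(x_1),\ldots,\phi(x_{n-1})$. No $\phi(y_j)$ term exists, so there is nothing to cancel ``in pairs'' or by (R1). Under your bookkeeping, where only $j=n$ feeds $\phi(x_i)$, the coefficient of $\phi(x_i)$ would contain a single summand of the right side of (R2), not all of it. The correct outcome is
\begin{multline*}
[\psi,Y]_R(y_n)=-\sum_{i=1}^{n-1}(-1)^{n-i}\Bigl(\rho(x_1,\ldots,\widehat{x_i},\ldots,x_{n-1},[y_1,\ldots,y_n])\\
-\sum_{j=1}^{n}(-1)^{n-j}\rho(y_1,\ldots,\widehat{y_j},\ldots,y_n)\,\rho(x_1,\ldots,\widehat{x_i},\ldots,x_{n-1},y_j)\Bigr)\phi(x_i).
\end{multline*}
Hence (MMM)$\Leftrightarrow$(R2) holds without assuming (R1).

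The converse needs no density argument. Given $x_1,\ldots,x_{n-2}$, pick $x_{n-1}\notin\operatorname{span}(x_1,\ldots,x_{n-2})$ and $\phi$ with $\phi(x_{n-1})=v$ and $\phi(x_k)=0$ for $k\le n-2$. Then only the $i=n-1$ coefficient survives, and it is exactly the (R2)-defect for these fixed entries. If no such $x_{n-1}$ exists, then $x_1,\ldots,x_{n-2}$ span $\g$, so $\rho(x_1,\ldots,x_{n-2},\cdot)=0$ and (R2) is trivial. The $y_j$ impose no independence condition at all.

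\textbf{Step 3.} The terms $\rho(\ldots,[x_1,\ldots,x_{n-1},y_i],\ldots)$ come from $[\phi,X\circ Y]_R$, and they are matched by (R1), not (R2). Specifically, (R1) is applied to $[\rho(X),\rho(Y)]$ and to $[\rho(X),\rho(y_1,\ldots,\widehat{y_j},\ldots,y_{n-1},y_n)]$. The extra summand $\rho(y_1,\ldots,\widehat{y_j},\ldots,y_{n-1},[X,y_n])$ of the latter cancels a term of $[X,[\phi,Y]_R]_L$, exactly as in the paper's (MLL) computation. Applying (R2) to those terms would only create products $\rho(x_1,\ldots,\widehat{x_k},\ldots,x_{n-1},y_i)\rho(\ldots,x_k)\phi(y_j)$ that occur nowhere else in the expansion.

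In the direct (LML) expansion, (R2) is needed only for the block $\rho(x_1,\ldots,\widehat{x_i},\ldots,x_{n-1},[y_1,\ldots,y_n])\phi(x_i)$ arising from $[[\phi,X]_R,Y]_R$. That block is verbatim the (MMM) expression above. This is why the paper checks (MLL) instead: it needs only (R1) and the Filippov identity, whereas your (LML) route redoes the (MMM) computation inside it. With these corrections, your route works.
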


\begin{proof} For $X=(x_1,\cdots,x_{n-1}),\, Y=(y_1,\cdots,y_{n-1})\in\L, \, y_n\in \g$, first we compute the equality
$$[X\circ Y, \phi]_L(y_n)=[X, [Y, \phi]_L]_L(y_n)-[Y,[X, \phi]_L]_L(y_n).$$
By definition \eqref{eq:leibniz01}, the left hand side is equal to
\begin{eqnarray*}
[X\circ Y, \phi]_L(y_n)=\rho(X\circ Y)\phi(y_n)-\phi(\ad_{X\circ Y}(y_n)),
\end{eqnarray*}
and the right hand side is equal to
\begin{eqnarray*}
&&[X, [Y, \phi]_L]_L(y_n)-[Y,[X, \phi]_L]_L(y_n)\\
&=&\rho(X)[Y, \phi]_L(y_n)-[Y, \phi]_L(\ad_X(y_n))-\rho(Y)[X, \phi]_L(y_n)+[X, \phi]_L(\ad_Y(y_n))\\
&=&\rho(X)\rho(Y)\phi(y_n)-\rho(X)\phi(\ad_Y(y_n))-\rho(Y)\phi(\ad_X(y_n))+\phi(\ad_Y\ad_X(y_n))\\
&&-\rho(Y)\rho(X)\phi(y_n)+\rho(Y)\phi(\ad_X(y_n))+\rho(X)\phi(\ad_Y(y_n))-\phi(\ad_X\ad_Y(y_n))\\
&=&\rho(X)\rho(Y)\phi(y_n)+\phi(\ad_Y\ad_X(y_n))-\rho(Y)\rho(X)\phi(y_n)-\phi(\ad_X\ad_Y(y_n))\\
&=&[\rho(X),\rho(Y)]\phi(y_n)-\phi([\ad_X,\ad_Y](y_n)).
\end{eqnarray*}
Since $\ad: \L \to \End(\g)$ is a homomorphism of Leibniz algebras,
thus (LLM) is valid for $[\cdot,\cdot]_L$ if and only if (R1) is valid for $\rho$.

Next we compute the equality
$$[[\phi, X]_R, Y]_R(y_n)+[[X,\phi]_L, Y]_R(y_n)=0.$$
By \eqref{eq:leibniz01} and \eqref{eq:leibniz02} we have
$${[(x_1,\cdots,x_{n-1}),\phi]_L(w)+[\phi,(x_1,\cdots,x_{n-1})]_R}(w)=-{\sum_{i=1}^{n-1}}\,  (-1)^{n-i}\rho(x_1,\cdots,\hat{x_i},\cdots,x_{n-1},w)\phi(x_i),$$
thus
$${[(x_1,\cdots,x_{n-1}),\phi]_L+[\phi,(x_1,\cdots,x_{n-1})]_R}=-{\sum_{i=1}^{n-1}}\,  (-1)^{n-i}\rho(x_1,\cdots,\hat{x_i},\cdots,x_{n-1},\cdot)\phi(x_i),$$
where we denote $\rho(x_1,\cdots,\hat{x_i},\cdots,x_{n-1},\cdot)\phi(x_i):\g\to  {V}$ by
$$\rho(x_1,\cdots,\hat{x_i},\cdots,x_{n-1},\cdot)\phi(x_i)(w)=\rho(x_1,\cdots,\hat{x_i},\cdots,x_{n-1},w)\phi(x_i).$$
Now replace $x_i$ by $y_i$, $\phi$ by $-{\sum_{i=1}^n}\,  (-1)^{n-i}\rho(x_1,\cdots,\hat{x_i},\cdots,x_{n-1},\cdot)\phi(x_i)$ in \eqref{eq:leibniz02}, then we have
\begin{eqnarray*}
&&[[\phi, X]_R+[X, \phi]_L, Y]_R(y_n)\\
&=&-{\sum_{i=1}^{n-1}}\,  (-1)^{n-i}\rho(x_1,\cdots,\hat{x_i},\cdots,x_{n-1},\cdot)\phi(x_i)([y_1,\cdots,y_n])\\
&&+{\sum_{i=1}^{n-1}}\, (-1)^{n-j}\rho(y_1,\cdots,\hat{y_j},\cdots,y_n)\left({\sum_{i=1}^n}\,  (-1)^{n-i}\rho(x_1,\cdots,\hat{x_i},\cdots,x_{n-1},\cdot)\phi(x_i)(y_j)\right)\\
&=&-{\sum_{i=1}^{n-1}} (-1)^{n-i}\Big(\rho(x_1,\cdots,\hat{x_i},\cdots,x_{n-1},[y_1,\cdots,y_n])\\
&&-{\sum_{i=1}^n}\, (-1)^{n-j}\rho(y_1,\cdots,\hat{y_j},\cdots,y_n)\rho(x_1,\cdots,\hat{x_i},\cdots,x_{n-1},y_j)\Big)\phi(x_i),
\end{eqnarray*}
thus (MMM) is valid for $[\cdot,\cdot]_L$ and $[\cdot,\cdot]_R$ if and only if (R2) is valid for $\rho$.

Let us verify (MLL). The first term on its right-hand side of (MLL) is
\begin{align}
 &[(x_1,\ldots,x_{n-1}),
 [\phi,(y_1,\ldots,y_{n-1})]_R]_L(y_n) \notag\\
 &=\rho(x_1,\ldots,x_{n-1})\phi([y_1,\ldots,y_n]) \notag\\
 &\quad-\rho(x_1,\ldots,x_{n-1})
 \rho(y_1,\ldots,y_{n-1})\phi(y_n) \notag\\
 &\quad-\sum_{j=1}^{n-1}(-1)^{n-j}
 \rho(x_1,\ldots,x_{n-1})
 \rho(y_1,\ldots,\widehat{y_j},\ldots,y_{n-1},y_n)
 \phi(y_j) \notag\\
 &\quad-\phi\bigl([y_1,\ldots,y_{n-1},
 [x_1,\ldots,x_{n-1},y_n]]\bigr) \notag\\
 &\quad+\rho(y_1,\ldots,y_{n-1})
 \phi([x_1,\ldots,x_{n-1},y_n]) \notag\\
 &\quad+\sum_{j=1}^{n-1}(-1)^{n-j}
 \rho(y_1,\ldots,\widehat{y_j},\ldots,y_{n-1},
 [x_1,\ldots,x_{n-1},y_n])\phi(y_j).              \label{eq:MLL-first}
\end{align}
The second term is
\begin{align}
 &[[ (x_1,\ldots,x_{n-1}),\phi]_L,
 (y_1,\ldots,y_{n-1})]_R(y_n) \notag\\
 &=\rho(x_1,\ldots,x_{n-1})\phi([y_1,\ldots,y_n])
 -\phi\bigl([x_1,\ldots,x_{n-1},[y_1,\ldots,y_n]]\bigr) \notag\\
 &\quad-\rho(y_1,\ldots,y_{n-1})
 \rho(x_1,\ldots,x_{n-1})\phi(y_n) \notag\\
 &\quad+\rho(y_1,\ldots,y_{n-1})
 \phi([x_1,\ldots,x_{n-1},y_n]) \notag\\
 &\quad-\sum_{j=1}^{n-1}(-1)^{n-j}
 \rho(y_1,\ldots,\widehat{y_j},\ldots,y_{n-1},y_n)
 \rho(x_1,\ldots,x_{n-1})\phi(y_j) \notag\\
 &\quad+\sum_{j=1}^{n-1}(-1)^{n-j}
 \rho(y_1,\ldots,\widehat{y_j},\ldots,y_{n-1},y_n)
 \phi([x_1,\ldots,x_{n-1},y_j]).                 \label{eq:MLL-second}
\end{align}
Subtracting \eqref{eq:MLL-second} from
\eqref{eq:MLL-first}, we obtain
\begin{align}
 &[(x_1,\ldots,x_{n-1}),
 [\phi,(y_1,\ldots,y_{n-1})]_R]_L(y_n) \notag\\
 &\quad-[[ (x_1,\ldots,x_{n-1}),\phi]_L,
 (y_1,\ldots,y_{n-1})]_R(y_n) \notag\\
 &=-\bigl[\rho(x_1,\ldots,x_{n-1}),
 \rho(y_1,\ldots,y_{n-1})\bigr]\phi(y_n) \notag\\
 &\quad+\phi\bigl([x_1,\ldots,x_{n-1},[y_1,\ldots,y_n]]
 -[y_1,\ldots,y_{n-1},[x_1,\ldots,x_{n-1},y_n]]\bigr) \notag\\
 &\quad-\sum_{j=1}^{n-1}(-1)^{n-j}
 \Bigl\{
 [\rho(x_1,\ldots,x_{n-1}),
 \rho(y_1,\ldots,\widehat{y_j},\ldots,y_{n-1},y_n)] \notag\\
 &\hspace{43mm}
 -\rho(y_1,\ldots,\widehat{y_j},\ldots,y_{n-1},
 [x_1,\ldots,x_{n-1},y_n])
 \Bigr\}\phi(y_j) \notag\\
 &\quad-\sum_{j=1}^{n-1}(-1)^{n-j}
 \rho(y_1,\ldots,\widehat{y_j},\ldots,y_{n-1},y_n)
 \phi([x_1,\ldots,x_{n-1},y_j]).                 \label{eq:MLL-difference}
\end{align}
Now condition (R1) gives
\begin{align}
 &\bigl[\rho(x_1,\ldots,x_{n-1}),
 \rho(y_1,\ldots,\widehat{y_j},\ldots,y_{n-1},y_n)\bigr] \notag\\
 &=\sum_{\substack{1\leq i\leq n-1\\i\neq j}}
 \rho(y_1,\ldots,\widehat{y_j},\ldots,
 [x_1,\ldots,x_{n-1},y_i],\ldots,y_{n-1},y_n) \notag\\
 &\quad+\rho(y_1,\ldots,\widehat{y_j},\ldots,y_{n-1},
 [x_1,\ldots,x_{n-1},y_n]).                       \label{eq:R1-second-use}
\end{align}
Using (R1) and \eqref{eq:R1-second-use} in
\eqref{eq:MLL-difference}, we get
\begin{align}
 &[(x_1,\ldots,x_{n-1}),
 [\phi,(y_1,\ldots,y_{n-1})]_R]_L(y_n) \notag\\
 &\quad-[[ (x_1,\ldots,x_{n-1}),\phi]_L,
 (y_1,\ldots,y_{n-1})]_R(y_n) \notag\\
 &=\sum_{i=1}^{n-1}
 \phi\bigl([y_1,\ldots,y_{i-1},
 [x_1,\ldots,x_{n-1},y_i],y_{i+1},\ldots,y_n]\bigr) \notag\\
 &\quad-\sum_{i=1}^{n-1}
 \rho(y_1,\ldots,y_{i-1},[x_1,\ldots,x_{n-1},y_i],
 y_{i+1},\ldots,y_{n-1})\phi(y_n) \notag\\
 &\quad-\sum_{\substack{1\leq i,j\leq n-1\\i\neq j}}
 (-1)^{n-j}
 \rho(y_1,\ldots,\widehat{y_j},\ldots,
 [x_1,\ldots,x_{n-1},y_i],\ldots,y_{n-1},y_n)
 \phi(y_j) \notag\\  \label{eq:MLL-reduced}
 &\quad-\sum_{i=1}^{n-1}(-1)^{n-i}
 \rho(y_1,\ldots,\widehat{y_i},\ldots,y_{n-1},y_n)
 \phi([x_1,\ldots,x_{n-1},y_i])\\
 &=[\phi,(x_1,\ldots,x_{n-1})\circ
 (y_1,\ldots,y_{n-1})]_R(y_n).     \notag
\end{align}
This proves (MLL).

At last, one can check that (LML) is valid for $[\cdot,\cdot]_L$ and $[\cdot,\cdot]_R$ if and only if (R1) and (R2) are valid for $\rho$.
\end{proof}

\begin{definition}\label{def:representation}
Let \(\g\) be an \(n\)-Lie algebra.  A \emph{representation} of
\(\g\) on a vector space \(M\) is a linear map
\[
 \rho:\bigwedge\nolimits^{n-1}\g\longrightarrow\operatorname{End}(M)
\]
satisfying
\begin{align}
 &[\rho(x_1,\ldots,x_{n-1}),
   \rho(y_1,\ldots,y_{n-1})]
 \notag\\
 &\quad =
 \sum_{i=1}^{n-1}
 \rho(y_1,\ldots,y_{i-1},
 [x_1,\ldots,x_{n-1},y_i]_{\g},
 y_{i+1},\ldots,y_{n-1}),
\label{eq:rep1}\\
 &\rho(x_1,\ldots,x_{n-2},[y_1,\ldots,y_n]_{\g})
 \notag\\
 &\quad =
 \sum_{i=1}^{n}(-1)^{n-i}
 \rho(y_1,\ldots,\widehat{y_i},\ldots,y_n)
 \rho(x_1,\ldots,x_{n-2},y_i).
\label{eq:rep2}
\end{align}
We denote it by $(V, \rho_\g)$ or simply $(V, \rho)$ and  $V$ is called an $\g$-module.
\end{definition}

For example, given an $n$-Lie algebra $\g$, there is a natural {\bf adjoint representation} on itself. The corresponding representation
$\ad(x_1,\cdots,x_{n-1})=\ad_X$ is given by
\begin{eqnarray*}
\ad(x_1,\cdots,x_{n-1})(x_n)=[x_1,\cdots,x_{n-1},x_n].
\end{eqnarray*}

\begin{proposition}
Given a representation $\rho$ of the $n$-Lie algebra $\g$ on the vector space $V$. Define a bracket $\g\oplus V$ by
\begin{eqnarray}
[x_1 + u_1, \cdots, x_n + u_n]=[x_1,\cdots,x_{n}] +{\sum_{i=1}^n}\, (-1)^{n-i} \rho (x_1,\cdots, \hat{x_i},\cdots, x_n)(u_i),
\end{eqnarray}
Then $\g\oplus V$ is an $n$-Lie algebra, which is called the semidirect product of $n$-Lie algebra $\g$ and $V$.
\end{proposition}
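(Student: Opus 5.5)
The plan is to check the two axioms of \Cref{def:nlie} for the bracket on $\g\oplus V$: first that it is alternating, then the Filippov identity \eqref{eq:FI}. Both sides of \eqref{eq:FI} are multilinear, so it is enough to test homogeneous arguments. Each of $x_1,\ldots,x_{n-1},y_1,\ldots,y_n$ will lie either in $\g$ or in $V$.

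\emph{Alternation.} The defining formula can be read as follows. The bracket of $n-1$ elements of $\g$ and one element $u\in V$ equals $\rho(\ldots)u$ when $u$ stands in the last slot. In any other slot, the value is obtained by moving $u$ to the end, which produces the sign $(-1)^{n-i}$. The bracket vanishes as soon as two entries lie in $V$. Since $\rho$ is defined on $\wedge^{n-1}\g$ and the sign $(-1)^{n-i}$ is exactly the sign of the cyclic move of slot $i$ to slot $n$, the bracket is alternating. The first step is to record the resulting rule
\[
[x_1,\ldots,x_{i-1},u,x_{i+1},\ldots,x_n]=(-1)^{n-i}\rho(x_1,\ldots,\widehat{x_i},\ldots,x_n)u .
\]
This lets us use skew-symmetry to place the $V$-entry wherever it is convenient.

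\emph{Filippov identity, by number of $V$-entries.} Suppose first that at least two of the $2n-1$ arguments lie in $V$. If both are among the $y$'s, the inner bracket vanishes on the left-hand side. If one is among the $x$'s and one among the $y$'s, the inner bracket lies in $V$, so the outer bracket has two $V$-entries and vanishes. The same reasoning kills every term on the right-hand side, so these cases hold trivially. If no argument lies in $V$, the identity is \eqref{eq:FI} for $\g$ itself. It remains to treat exactly one $V$-entry, in two sub-cases.

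\emph{(a) The $V$-entry is a $y$.} By skew-symmetry of both sides in $y_1,\ldots,y_n$ we may take $y_n=u$. The left-hand side is $\rho(X)\rho(y_1,\ldots,y_{n-1})u$. On the right-hand side, the terms $i<n$ give $\sum_{i<n}\rho(y_1,\ldots,\ad_X y_i,\ldots,y_{n-1})u$, and the term $i=n$ gives $\rho(y_1,\ldots,y_{n-1})\rho(X)u$. Hence the identity is exactly \eqref{eq:rep1}.

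\emph{(b) The $V$-entry is an $x$.} By skew-symmetry in $x_1,\ldots,x_{n-1}$ we may take $x_{n-1}=u$. Moving $u$ past the single element $[y_1,\ldots,y_n]$ shows the left-hand side equals $-\rho(x_1,\ldots,x_{n-2},[y_1,\ldots,y_n])u$. For the right-hand side, each inner bracket is
\[
[x_1,\ldots,x_{n-2},u,y_i]=-\rho(x_1,\ldots,x_{n-2},y_i)u .
\]
This is a $V$-element sitting in slot $i$, and applying the rule once more gives $-\sum_i(-1)^{n-i}\rho(y_1,\ldots,\widehat{y_i},\ldots,y_n)\rho(x_1,\ldots,x_{n-2},y_i)u$. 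So the identity is exactly \eqref{eq:rep2}.

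The only real obstacle is the sign bookkeeping in (b), namely the two successive moves of the $V$-element to the last slot. Everything else is forced by multilinearity and the vanishing of brackets with two $V$-entries. The computation also mirrors the one used for (MMM) in \Cref{thm:rep}, which is a useful cross-check on the signs.
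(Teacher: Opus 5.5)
Your proof is correct, with one small omission in the case split. The paper gives no proof of this proposition; it is the $\omega=0$ case of the twisted semidirect product the paper later quotes from \cite{BLW}. So there is no argument to compare line by line.

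Your reduction is, however, exactly the computation the paper runs in the converse direction. A single $V$-entry in the inner block turns \eqref{eq:FI} into \eqref{eq:rep1}. A single $V$-entry in the outer block turns it into \eqref{eq:rep2} after cancelling a common factor $-1$. The paper does the same in \Cref{pro:2-modules}, and in \Cref{prop:matched-representations} with the same placement $x_{n-1}=u$ and identical signs.

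The omission: among configurations with at least two $V$-entries, you treat ``both among the $y_i$'' and ``one among the $x_a$, one among the $y_i$'', but not ``both among the $x_a$''. That case is immediate, since the outer bracket on the left and every inner bracket $[x_1,\ldots,x_{n-1},y_i]$ on the right already contain two $V$-entries. It should still be listed so the case analysis is visibly exhaustive.
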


Now we define the generalized cochain complex for an $n$-Lie algebra $\g$ with coefficients in $ {V}$ by
$$C^p(\g, {V}):=\Hom\left(\left(\wedge{}^{(n-1)p}\g\right)\otimes\g, {V}\right)
=\Hom\left(\L{}^{p},\Hom(\g, {V})\right)$$
and the coboundary operator
$$d^{p-1}:C^{p-1}(\g, {V})\to C^{p}(\g, {V})$$
by the Loday-Pirashvili cohomology of Leibniz algebras of $\L$ with coefficient in $\Hom(\g,V)$:
\begin{eqnarray*}
&&d^{{p}-1}\omega(X_1,X_2,\cdots,X_{p},w)\\
&=&d^{{p}-1}\omega(X_1,X_2,\cdots,X_{p})(w)\\
&=&\sum_{i=1}^{{p}-1}(-1)^{i+1}[X_i,\omega(X_1,\cdots,\hat{X_i},\cdots,X_{p})]_L(w)+(-1)^{p}[\omega(X_1,\cdots,X_{{p}-1}),X_{p}]_R(w)\\
&&+\sum_{1 \leq i<j \leq {p}}(-1)^i \omega\left({X}_1, \cdots, \widehat{{X}}_i,
\cdots, {X}_{j-1}, {X}_i \circ {X}_j, {X}_{j+1}, \cdots, {X}_{p}\right) (w)\\
\end{eqnarray*}
for all $X_i\in \L=\wedge{}^{n-1}\g,\ w\in \g$. It is proved that $d\circ d=0$.
For more details on cohomology of  Leibniz algebras, see \cite{Loday}.

\begin{theorem}\label{thm:cohomology}
There is a well-defined cohomology of an $n$-Lie algebra $\g$ with coefficients in ${V}$ seen as the cohomology of Leibniz algebra $\L$ with coefficients in $\Hom(\g, {V})$. The cohomology group is denoted by $\mathbf{H}^p(\g,{V}):=\mathbf{H}^p(\L,\Hom(\g, {V}))$.
\end{theorem}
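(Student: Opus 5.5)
The statement follows from \Cref{thm:rep} together with the standard Loday--Pirashvili theory; the work is to check that the hypotheses of that theory hold and that the cochain spaces match. I will assume that $\rho$ is a representation of $\g$ on $V$ in the sense of \Cref{def:representation}, so that \eqref{eq:rep1} and \eqref{eq:rep2} hold. These are exactly (R1) and (R2).

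The first step uses \Cref{thm:rep}. By that theorem, the maps \eqref{eq:leibniz01} and \eqref{eq:leibniz02} make $M:=\Hom(\g,V)$ a representation of the Leibniz algebra $\L=\wedge^{n-1}\g$, meaning that (LLM), (LML) and (MLL) hold. The second step recalls that $\L$ really is a Leibniz algebra under \eqref{eq:fundamental}, as proved in \cite{DaletskiiTakhtajan}.

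The third step invokes the general fact from \cite{Loday}. For any Leibniz algebra $\L$ and any representation $M$, the Loday--Pirashvili operator on $C^p(\L,M)=\Hom(\L^{\otimes p},M)$ satisfies $d\circ d=0$, using only the Leibniz identity and the three bimodule axioms. The displayed $d^{p-1}$ is precisely this operator for $M=\Hom(\g,V)$, written out by evaluating at $w\in\g$.

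The fourth step identifies the cochain spaces. Currying gives the canonical isomorphism
\[
\Hom(\L^{\otimes p}\otimes\g,V)\cong\Hom(\L^{\otimes p},\Hom(\g,V)).
\]
Through it, $C^p(\g,V)$ is the Leibniz cochain space. Hence $(C^\bullet(\g,V),d)$ is a cochain complex, and $\mathbf H^p(\g,V)$ is well defined as $\ker d^p/\operatorname{im} d^{p-1}$.

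The main point needing care is that $d\circ d=0$ requires all three bimodule axioms, not just (LLM). \Cref{thm:rep} supplies (LLM) and (MLL) explicitly, and it obtains (LML) from (MMM) together with (MLL). Its proof states (LML) only as ``one can check''. I would therefore either rely on the earlier remark that, given (LLM), any two of (LML), (MLL), (MMM) imply the third, or verify directly that (LML) is the sum of (MLL) and (MMM) applied to $\phi$.

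A smaller subtlety is that the source should be tensor powers $\L^{\otimes p}$ rather than exterior powers $\wedge^{(n-1)p}\g$. I would read the notation $C^p$ in the tensor sense, as the Leibniz setting requires.
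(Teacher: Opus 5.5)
Your argument is correct, but it approaches the statement from the opposite end to the paper.

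\textbf{The paper's route.} The paper asserts $d\circ d=0$ from Leibniz cohomology just before the theorem, with \Cref{thm:rep} implicitly supplying the bimodule. Its proof then consists entirely of substituting \eqref{eq:leibniz01}--\eqref{eq:leibniz02} into $d^{p-1}$. It splits off the $i=n$ term of the right action so that it merges with the $\rho(X_i)$-sum. It then observes that the resulting formula in $\rho$ and the $n$-bracket is the known Gautheron--Takhtajan coboundary, now with coefficients in $V$.

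\textbf{Your route.} You make the well-definedness itself explicit. \Cref{thm:rep} gives the bimodule, the general Leibniz fact gives $d^2=0$, and currying matches the cochain spaces. Your patch for the ``one can check'' step in \Cref{thm:rep} is sound: (LML) is literally the sum of (MLL) and (MMM), and (LLM) is not even needed for this. You are also right that $d^2=0$ needs all three axioms. With the paper's signs, $d^1d^0=0$ is exactly (LML). In $d^2d^1\eta=0$, the terms in $\eta(X_3)$, $\eta(X_2)$, $\eta(X_1)$ cancel by (LLM), (MLL), (LML) respectively, and the pure $\eta$-terms cancel by the left Leibniz identity.

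One small caution: Loday--Pirashvili write their complex for right Leibniz algebras. So ``the displayed $d^{p-1}$ is precisely this operator'' should be read as its left-handed mirror. The low-degree checks above confirm that the paper's conventions match.

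\textbf{What each buys.} Your reading of $C^p$ through $\L^{\otimes p}$ rather than $\wedge^{(n-1)p}\g$ is the right one, and your route is the one that proves what the statement literally claims. What the paper's computation provides, and yours does not, is the identification of this complex with the classical $n$-Lie cochain complex. That identification is what justifies calling $\mathbf H^p(\L,\Hom(\g,V))$ the cohomology of $\g$. It also yields the explicit low-degree formulas \eqref{eq:1coc} and \eqref{eq:one-cocycle}, which are used later for deformations and abelian extensions.
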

\begin{proof} According to \eqref{eq:leibniz01} and \eqref{eq:leibniz02}, for all $X_i, {X}_{p}=(x_{p}^1, \cdots, x_{p}^{n-1})\in \L=\wedge{}^{n-1}\g,\ w\in \g$, we have
\begin{eqnarray*}
&&d^{{p}-1}\omega(X_1,X_2,\cdots,X_{p},w)\\
&=&\sum_{i=1}^{{p}-1}(-1)^{i+1}[X_i,\omega(X_1,\cdots,\hat{X_i},\cdots,X_{p})]_L(w)+(-1)^{p}[\omega(X_1,\cdots,X_{{p}-1}),X_{p}]_R(w)\\
&&+\sum_{1 \leq i<j \leq {p}}(-1)^i \omega\left({X}_1, \cdots, \widehat{{X}}_i,\cdots, {X}_{j-1}, {X}_i \circ {X}_j, {X}_{j+1}, \cdots, {X}_{p}\right) (w)\\
&=&\sum_{i=1}^{{p}-1}(-1)^{i+1}\left[\rho(X_i)\omega(X_1,\cdots,\hat{X_i},\cdots,X_{p})(w)-\omega(X_1,\cdots,\hat{X_i},\cdots,X_{p})([X_i,w])\right]\\
&&+(-1)^{p}\left[\omega(X_1,\cdots,X_{{p}-1})([X_{p},w])-\sum_{i=1}^n(-1)^{n-i}\rho(x_{p}^1,\cdots, \widehat{x_{p}^i},\cdots,x_{p}^{n-1}, w)\omega(X_1,\cdots,X_{{p}-1})(x_{p}^i)\right]\\
&&+\sum_{1 \leq i<j \leq {p}}(-1)^i \omega\left({X}_1, \cdots, \widehat{{X}}_i,\cdots, {X}_{j-1}, {X}_i \circ {X}_j, {X}_{j+1}, \cdots, {X}_{p}\right) (w)\\
&=&\sum_{i=1}^{{p}-1}(-1)^{i+1}\rho(X_i)\omega(X_1,\cdots,\hat{X_i},\cdots,X_{p},w)\\
&&+\sum_{i=1}^{{p}-1}(-1)^{i}\omega(X_1,\cdots,\hat{X_i},\cdots,X_{p},[X_i,w])
+(-1)^{p}\omega(X_1,\cdots,X_{{p}-1},[X_{p},w])\\
&&+(-1)^{{p}+1}\sum_{i=1}^n(-1)^{n-i}\rho(x_{p}^1,\cdots, \widehat{x_{p}^i},\cdots, x_{p}^{n-1}, w)\omega(X_1,\cdots,X_{{p}-1},x_{p}^i)\\
&&+\sum_{1 \leq i<j \leq {p}}(-1)^i \omega\left({X}_1, \cdots, \widehat{{X}}_i,\cdots, {X}_{j-1}, {X}_i \circ {X}_j, {X}_{j+1}, \cdots, {X}_{p},w\right)\\
&=&\sum_{i=1}^{p}(-1)^{i+1} \rho\left({X}_i\right) \omega\left({X}_1, \cdots,{\widehat{X_i}}, \cdots, {X}_{p}, w\right)+\sum_{i=1}^{p}(-1)^i  \omega\left({X}_1, \cdots,{\widehat{{X}_i}}, \cdots, {X}_{p},\left[{X}_i, w\right]\right) \\
&& +(-1)^{{p}+1}\sum_{i=1}^{n-1}(-1)^{n-i} \rho\left(x_{p}^1, \cdots, \widehat{x_{p}^i}, \cdots, x_{p}^{n-1}, w\right) \omega\left({X}_1, \cdots, {X}_{{p}-1}, x_{p}^i\right)\\
&& +\sum_{1 \leq i<j \leq {p}}(-1)^i \omega\left({X}_1, \cdots, \widehat{{X}}_i,
\cdots, {X}_{j-1}, {X}_i \circ {X}_j, {X}_{j+1}, \cdots, {X}_{p}, w\right) \\
\end{eqnarray*}
The terms on the right-hand side of the last equality are precisely the well-known cohomology of $n$-Lie algebras.
See \cite{Gau,Takhtajan} for similar formulas of the cohomology  of $n$-Lie algebras with adjoint representation of $\g$ on itself but not on $V$.
\end{proof}
\begin{remark}
While no new cohomology appears to arise--both the cohomology of Leibniz algebras and that of
$n$-Lie algebras being already known in the literature--we establish a connection between them, showing that the cohomology of
$n$-Lie algebras is essentially the cohomology of Leibniz algebras with the fundamental set $\L$ acting on the specific representation space $\Hom(\g,V)$.
\end{remark}

According to the above Theorem \ref{thm:cohomology}, a 0-cochain is a map $\nu\in\Hom(\g, {V})$,
a 1-cochain is a map
$\omega\in \Hom\left(\wedge{}^{n-1}\g,\Hom(\g, {V})\right)=\Hom\left(\wedge{}^n\g, {V}\right)$,
and the coboundary operator is give by
\begin{eqnarray}
\label{eq:cobound01}d^{0}\nu(X_1,w)&=&d^{0}\nu(X_1)(w)=-[\nu,X_1]_R(w),\\
\label{eq:cobound02}d^{1}\omega(X_1,X_2,w)&=&[X_1,\omega(X_2)]_L(w)+[\omega(X_1),X_2]_R(w)-\omega(X_1\circ X_2)(w).
\end{eqnarray}

Put $X_1=(x_1,\cdots,x_{n-1})\in \L,\ w=x_n\in\g$ in the equality \eqref{eq:cobound01}, then by \eqref{eq:leibniz02} we have
\begin{eqnarray}\label{eq:1coc}
d^0\nu(x_1,\cdots,x_n)&=&{\sum_{i=1}^n}\, (-1)^{n-i}\rho(x_1,\cdots,\hat{x_i},\cdots,x_n)\nu(x_i)-\nu([x_1, \cdots, x_n]).
\end{eqnarray}
\begin{definition}
Let $\g$ be an $n$-Lie algebra and $(V, \rho)$ be an $\g$-module. Then a map $\nu\in\Hom(\g, {V})$
is called 0-cocycle if and only if
\begin{eqnarray}\label{eq:0coc}
{\sum_{i=1}^n}\, (-1)^{n-i}\rho(x_1,\cdots,\hat{x_i},\cdots,x_n)\nu(x_i)-\nu([x_1, \cdots, x_n])=0,
\end{eqnarray}
and a map $\omega: \wedge^n\g\to V$ is called a 1-coboudary if there exists a map $\nu\in\Hom(\g, {V})$ such that $\omega=d^0\nu$.
\end{definition}
Put $X_1=(x_1,\cdots,x_{n-1})\in \L, X_2=(y_1,\cdots,y_{n-1})\in \L$, $w=y_n\in\g$ in the equality \eqref{eq:cobound02}, then we have
\begin{eqnarray*}
&&d^{1}\omega(x_1,\cdots,x_{n-1},y_1,\cdots,y_{n-1},y_n)\\
&=&[x_1,\cdots,x_{n-1},\omega(y_1,\cdots,y_{n-1})]_L(y_n)+[\omega(x_1,\cdots,x_{n-1}),y_1,\cdots,y_{n-1}]_R(y_n)\\
&&-\omega((x_1,\cdots,x_{n-1})\circ (y_1,\cdots,y_{n-1}))(y_n),
\end{eqnarray*}
where
\begin{eqnarray*}
&&[x_1,\cdots,x_{n-1},\omega(y_1,\cdots,y_{n-1})]_L(y_n)\\
&=&\rho(x_1,\cdots,x_{n-1})\omega(y_1,\cdots,y_{n-1})(y_n)-\omega(y_1,\cdots,y_{n-1})([x_1,\cdots,x_{n-1},y_n])\\
&=&\rho(x_1,\cdots,x_{n-1})\omega(y_1,\cdots,y_{n-1},y_n)-\omega(y_1,\cdots,y_{n-1},[x_1,\cdots,x_{n-1},y_n]),
\end{eqnarray*}
\begin{eqnarray*}
&&[\omega(x_1,\cdots,x_{n-1}),y_1,\cdots,y_{n-1}]_R(y_n)\\
&=&\omega(x_1,\cdots,x_{n-1})([y_1,\cdots,y_{n-1},y_n])-{\sum_{i=1}^n}\, (-1)^{n-i}\rho(y_1,\cdots,\hat{y_i},\cdots, y_n)\omega(x_1,\cdots,x_{n-1})(y_i)\\
&=&\omega(x_1,\cdots,x_{n-1},[y_1,\cdots,y_n])-{\sum_{i=1}^n}\, (-1)^{n-i}\rho(y_1,\cdots,\hat{y_i},\cdots, y_n)\omega(x_1,\cdots,x_{n-1},y_i),
\end{eqnarray*}
and
\begin{eqnarray*}
\omega((x_1,\cdots,x_{n-1})\circ (y_1,\cdots,y_{n-1}))(y_n)&=&\omega(\sum_{i=1}^{n-1}(y_1,\cdots,[x_1,\cdots,x_{n-1},y_i],\cdots,y_{n-1}])(y_n)\\
&=&\sum_{i=1}^{n-1}\omega(y_1,\cdots,[x_1,\cdots,x_{n-1},y_i],\cdots,y_{n-1},y_n).
\end{eqnarray*}
\begin{definition}\label{def:1coc}
Let $\g$ be an $n$-Lie algebra and $(V, \rho)$ be an $\g$-module. Then a map
$\omega: \wedge^n\g\to V$ is called a 1-cocycle if  $\forall x_i,y_i\in \g$,
\begin{eqnarray}\label{eq:one-cocycle}
\nonumber&& \omega(x_1,\cdots,x_{n-1},[y_1, \cdots, y_n]_\g)+\rho(x_1,\cdots,x_{n-1})\omega(y_1,\cdots, y_n)\\
\nonumber&=&{\sum_{i=1}^n}\, \omega(y_1,\cdots,[x_1,\cdots,x_{n-1}, y_i]_\g,\cdots, y_n)\\
&&+{\sum_{i=1}^n}\, (-1)^{n-i}\rho(y_1,\cdots,\hat{y_i},\cdots, y_n)\omega(x_1,\cdots,x_{n-1},y_i).
\end{eqnarray}
\end{definition}

\begin{remark}
Readers may notice that the 1-cocycle for $n$-Lie algebras is in fact the 2-cocycle in the Chevalley-Eilenberg cohomology of Lie algebras when $n=2$.
In fact, that is the key difference between the Chevalley-Eilenberg cohomology of Lie algebras and  Loday-Pirashvili cohomology of Leibniz algebras.
The Loday-Pirashvili cohomology of Lie algebras is very different to the Chevalley-Eilenberg cohomology when Lie algebras were seen as Leibniz algebras.
For more about the concise relationship between the cohomology of  Lie algebras and Leibniz algebras, see \cite{FW}.
\end{remark}
\subsection{Infinitesimal deformations of $n$-Lie algebras}

In this section, we study infinitesimal deformations of $n$-Lie algebras.
First we prove that a 1-parameter infinitesimal deformation of a $n$-Lie algebra $\g$
corresponds to a 1-cocycle of $\g$ with the coefficients in the adjoint representation.
Then we consider when two infinitesimal deformations are equivalent. Finally,
we introduce the notion of Nijenhuis operators to describe trivial deformations.

Let $\g$ be an $n$-Lie algebra, and $\omega:\wedge^{n}\g\to\g$ be a linear map. Consider a $\lambda$-parametrized family of linear operations:
\begin{eqnarray*}
[x_1,\cdots,x_{n}]_{\lam,\omega}&\triangleq& [x_1,\cdots,x_{n}]+ \lambda\omega(x_1,\cdots,x_{n})+o({\lambda^2}).
 \end{eqnarray*}

If all $[\cdot,\cdot,\cdot]_{\lam,\omega}$ endow $\g$ with $n$-Lie algebra structures which is denoted by $\g_{\lam,\omega}$, then we say that $\omega$ generates a
$\lambda$-parameter infinitesimal deformation of the $n$-Lie algebra $\g$.

\begin{theorem}\label{thm:deformation}
$\omega$ generates a $\lambda$-parameter infinitesimal deformation of the $n$-Lie algebra $\g$ is equivalent to\\
(i) $\omega$ itself defines an $n$-Lie algebras structure on $\g$;\\
(ii) $\omega$ is a 1-cocycle of $\g$ with the coefficients in the adjoint representation.
\end{theorem}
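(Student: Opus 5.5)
The plan is to substitute the deformed bracket $[\cdot,\ldots,\cdot]_{\lam,\omega}=[\cdot,\ldots,\cdot]+\lam\,\omega$ into the Filippov identity \eqref{eq:FI} and sort the result by powers of $\lam$. Higher-order terms $o(\lam^2)$ are taken to vanish, so we work with the linear family, as is standard for infinitesimal deformations. Alternating-ness of $[\cdot,\ldots,\cdot]_{\lam,\omega}$ is automatic because $\omega\in\Hom(\wedge^n\g,\g)$.

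For $x_1,\ldots,x_{n-1},y_1,\ldots,y_n\in\g$ we expand
\[
[x_1,\ldots,x_{n-1},[y_1,\ldots,y_n]_{\lam,\omega}]_{\lam,\omega}
-\sum_{i=1}^n[y_1,\ldots,[x_1,\ldots,x_{n-1},y_i]_{\lam,\omega},\ldots,y_n]_{\lam,\omega}
\]
as a polynomial $A_0+\lam A_1+\lam^2A_2$ in $\lam$, with $A_j$ independent of $\lam$. The three coefficients are:
\begin{itemize}
\item $A_0$ is the Filippov expression for $[\cdot,\ldots,\cdot]$, which vanishes since $\g$ is an $n$-Lie algebra.
\item $A_2$ is the Filippov expression for $\omega$ alone.
\item $A_1$ is the mixed term
\[
[x_1,\ldots,x_{n-1},\omega(y_1,\ldots,y_n)]+\omega(x_1,\ldots,x_{n-1},[y_1,\ldots,y_n])
-\sum_{i=1}^n\big([y_1,\ldots,\omega(x_1,\ldots,x_{n-1},y_i),\ldots,y_n]+\omega(y_1,\ldots,[x_1,\ldots,x_{n-1},y_i],\ldots,y_n)\big).
\]
\end{itemize}
Since $K$ has characteristic zero and is therefore infinite, the identity holds for all $\lam$ exactly when $A_1=0$ and $A_2=0$. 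The condition $A_2=0$ says precisely that $\omega$ is an $n$-Lie bracket on $\g$, which is condition (i).

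The remaining step is to identify $A_1=0$ with \eqref{eq:one-cocycle} for $\rho=\ad$. The term $\rho(x_1,\ldots,x_{n-1})\omega(y_1,\ldots,y_n)$ is just $[x_1,\ldots,x_{n-1},\omega(y_1,\ldots,y_n)]$. The only nontrivial point is the sign bookkeeping in the last sum of \eqref{eq:one-cocycle}. Moving the $i$-th slot to the last position by skew-symmetry gives
\[
(-1)^{n-i}\ad(y_1,\ldots,\widehat{y_i},\ldots,y_n)\,\omega(x_1,\ldots,x_{n-1},y_i)=[y_1,\ldots,y_{i-1},\omega(x_1,\ldots,x_{n-1},y_i),y_{i+1},\ldots,y_n].
\]
Here we need $n-i$ transpositions, which cancels the sign $(-1)^{n-i}$. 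With this, $A_1=0$ is term-by-term the 1-cocycle identity \eqref{eq:one-cocycle} with $\rho=\ad$, which is condition (ii).

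Both directions follow at once: the deformation property forces $A_1=A_2=0$, and conversely (i) and (ii) make every coefficient vanish. I expect no real obstacle beyond this sign check and stating the polynomial-in-$\lam$ argument carefully.
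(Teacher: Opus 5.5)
Your proposal is correct and takes essentially the same route as the paper. Both expand the Filippov identity for $[\cdot,\ldots,\cdot]_{\lam,\omega}$ and read off the $\lam^2$-coefficient as the Filippov identity for $\omega$, and the $\lam$-coefficient as \eqref{eq:one-cocycle} with $\rho=\ad$. Your version is slightly more careful than the paper's: you make the $(-1)^{n-i}$ sign bookkeeping explicit, and you justify the converse direction using the infinitude of $K$.
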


\begin{proof}
For the equality
\begin{eqnarray*}
[x_1,\cdots,x_{n-1}, [y_1,\cdots,y_{n}]_{\lam,\omega}]_{\lam,\omega}
&=&{\sum_{i=1}^n}\, [y_1, \cdots,[x_1,\cdots,x_{n-1}, y_i]_{\lam,\omega},\cdots, y_n]_{\lam,\omega},
\end{eqnarray*}
the left hand side is equal to
\begin{eqnarray*}
&&[x_1,\cdots,x_{n-1}, [y_1,\cdots,y_{n}]+\lam\omega(y_1,\cdots,y_{n})]_{\lam,\omega}\\
&=&[x_1,\cdots,x_{n-1}, [y_1,\cdots,y_{n}]]+\lam\omega(x_1,\cdots,x_{n-1}, [y_1,\cdots,y_{n}])\\
&&+[x_1,\cdots,x_{n-1},\lam\omega(y_1,\cdots,y_{n})]+\lam\omega(x_1,\cdots,x_{n-1},\lam\omega(y_1,\cdots,y_{n}))\\
&=&[x_1,\cdots,x_{n-1}, [y_1,\cdots,y_{n}]]\\
&&+\lam\{\omega(x_1,\cdots,x_{n-1}, [y_1,\cdots,y_{n}])+[x_1,\cdots,x_{n-1},\omega(y_1,\cdots,y_{n})]\}\\
&&+\lam^2\omega(x_1,\cdots,x_{n-1},\omega(y_1,\cdots,y_{n})),
\end{eqnarray*}
and the right hand side is equal to
\begin{eqnarray*}
&&{\sum_{i=1}^n}\, [y_1, \cdots,[x_1,\cdots,x_{n-1}, y_i],\cdots, y_n]_{\lam,\omega}+{\sum_{i=1}^n}\, [y_1, \cdots,[x_1,\cdots,x_{n-1}, y_i]_{\lam,\omega},\cdots, y_n]_{\lam,\omega},\\
&=&{\sum_{i=1}^n}\, [y_1, \cdots,[x_1,\cdots,x_{n-1}, y_i],\cdots, y_n]\\
&&+\lam\{{\sum_{i=1}^n}\, \omega(y_1,\cdots,[x_1,\cdots,x_{n-1}, y_i],\cdots, y_n)+{\sum_{i=1}^n}\, [y_1,\cdots,\omega(x_1,\cdots,x_{n-1}, y_i),\cdots, y_n]\}\\
&&+\lam^2\{{\sum_{i=1}^n}\, \omega(y_1,\cdots,\omega(x_1,\cdots,x_{n-1}, y_i),\cdots,y_n)\}.
\end{eqnarray*}
Thus we have
\begin{eqnarray}
\nonumber &&\omega(x_1,\cdots,x_{n-1}, [y_1,\cdots,y_{n}])+[x_1,\cdots,x_{n-1},\omega(y_1,\cdots,y_{n})]\\
\nonumber &=&{\sum_{i=1}^n}\, \omega(y_1,\cdots,[x_1,\cdots,x_{n-1}, y_i],\cdots, y_n)\\
\label{eq:2-coc01}&&+{\sum_{i=1}^n}\, (-1)^{n-i}[y_1,\cdots,\hat{y_i},\cdots, y_n,\omega(x_1,\cdots,x_{n-1}, y_i)],
\end{eqnarray}
and
\begin{eqnarray}
\label{eq:2-coc02} &&\omega(x_1,\cdots,x_{n-1},\omega(y_1,\cdots,y_{n}))={\sum_{i=1}^n}\, \omega(y_1,\cdots,\omega(x_1,\cdots,x_{n-1}, y_i),\cdots,y_n).
\end{eqnarray}
Therefore $\omega$ defines an $n$-Lie algebra structure on $\g$ and $\omega$ is a 1-cocycle of $\g$ with the coefficients in the adjoint representation.
\end{proof}

\begin{remark}\label{rem:no-hidden-conditions}
In the above proof, we don't consider the higher order deformation which include items related $\lambda^2$, $\cdot$, $\lambda^{n-1}$, etc.
The interested reader can refer to existing literature to find interesting research findings, see \cite{LiuSheng}.
\end{remark}

\subsection{Abelian extensions of $n$-Lie algebras}

In this section, we study abelian extensions of $n$-Lie algebras.
We show that associated to any abelian extension, there is a representation and a 1-cocycle.
Furthermore, abelian extensions can be classified by the first cohomology group.

\begin{definition}
 Let $\g, {V}$ and $\hat{\g}$ be $n$-Lie algebras and
$i: {V}\to\hat{\g},~~p:\hat{\g}\to\g$
be homomorphisms. The following sequence of $n$-Lie algebras is a
short exact sequence if $\mathrm{Im}(i)=\mathrm{Ker}(p)$,
$\mathrm{Ker}(i)=0$ and $\mathrm{Im}(p)=\g$.
\begin{equation}\label{diagram:exact}
 \xymatrix{
   0  \ar[r]^{} &  {V} \ar[r]^{i} & \hat{\g} \ar[r]^{p} & \g  \ar[r]^{} & 0 \\
  }
\end{equation}
In this case, we call $\hat{\g}$  an extension of $\g$ by
$ {V}$, and denote it by $\E_{\hat{\g}}$.
It is called an abelian extension if $ {V}$ is an abelian ideal (see \cite{Kasymov}) of $\hat{\g}$.
\end{definition}

A section $\sigma:\g\to\hat{\g}$ of $p:\hat{\g}\to\g$
consists of linear maps
$\sigma:\g\to\hat{\g}$
 such that  $p\circ\sigma=\id_{\g}$.

\begin{definition}
 Two extensions of $n$-Lie algebras
 $\E_{\hat{\g}}:0\to {V}\stackrel{i}{\to}\hat{\g}\stackrel{p}{\to}\g\to0$
 and $\E_{\tilde{\g}}:0\to {V}\stackrel{j}{\to}\tilde{\g}\stackrel{q}{\to}\g\to0$ are equivalent,
 if there exists an $n$-Lie algebras homomorphism $F:\hat{\g}\to\tilde{\g}$  such that the following diagram commutes
\begin{equation}\label{diagram:equivalent}
\xymatrix{
   0  \ar[r]^{} &  {V} \ar[d]_{\id} \ar[r]^{i} & \hat{\g} \ar[d]_{F} \ar[r]^{p} & \g \ar[d]_{\id} \ar[r]^{} & 0 \\
   0 \ar[r]^{} &  {V} \ar[r]^{j} & \tilde{\g} \ar[r]^{q} & \g \ar[r]^{} & 0
   }
\end{equation}
The set of equivalent classes of extensions of $\g$ by $\h$ is denoted by $\Ext(\g, {V})$.
\end{definition}

Let $\hat{\g}$ be an abelian extension of $\g$ by
$ {V}$, and $\sigma:\g\to\hat{\g}$ be a section. Denote by
$$\si(X)=\si(x_1,\cdots,x_{n-1})\triangleq(\si(x_1),\cdots,\si(x_{n-1})),$$
and define $\rho:\wedge^{n-1}\g\to\End( {V})$ by
\begin{equation}\label{eq:rep}
\rho(X)(u)=\rho(x_1,\cdots,x_{n-1})(u)\triangleq[\sigma(x_1),\cdots,\sigma(x_{n-1}),u]=\ad(\si(X))u,
\end{equation}
for all $X=(x_1,\cdots,x_{n-1})\in\wedge^{n-1}\g$, $u\in {V}$.

\begin{lemma}\label{pro:2-modules}
With the above notations, $\rho$ is a representation of $\g$ on ${V}$ and does not depend on the choice of the section $\sigma$.
Moreover,  equivalent abelian extensions give the same representation of $\g$ on ${V}$.
\end{lemma}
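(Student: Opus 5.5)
The plan is to exploit two facts: $V=\Ker(p)$ is an ideal of $\hat{\g}$, and it is abelian. Abelian here means any bracket in $\hat{\g}$ with at least two entries in $V$ vanishes. I will establish the three claims of Lemma~\ref{pro:2-modules} in this order: independence of the section, the representation identities, and invariance under equivalence.

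\textbf{Independence of the section.} Let $\sigma,\sigma'$ be two sections. Then $p(\sigma'(x)-\sigma(x))=0$, so $\sigma'(x)=\sigma(x)+v(x)$ for some linear $v:\g\to V$. Expanding $[\sigma'(x_1),\dots,\sigma'(x_{n-1}),u]$ by multilinearity, every term other than $[\sigma(x_1),\dots,\sigma(x_{n-1}),u]$ contains $u$ together with at least one $v(x_i)$. Each such term therefore has two entries in $V$ and vanishes. Hence $\rho$ does not depend on $\sigma$. Since $V$ is an ideal, $\rho(X)u$ lies in $V$, so $\rho$ is well defined as a map into $\End(V)$. The same expansion shows that replacing a single $\sigma(y)$ by any lift of $y$ does not change a bracket already containing one element of $V$. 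I will use this fact repeatedly.

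\textbf{Identity \eqref{eq:rep1}.} Apply the Filippov identity \eqref{eq:FI} in $\hat{\g}$ to $\sigma(x_1),\dots,\sigma(x_{n-1})$ and $\sigma(y_1),\dots,\sigma(y_{n-1}),u$. The left side is $\rho(X)\rho(Y)u$. On the right, the last summand gives $\rho(Y)\rho(X)u$. In the remaining summands, $[\sigma(X),\sigma(y_i)]$ equals $\sigma([X,y_i]_\g)$ plus an element of $V$, because $p$ is a homomorphism. The correction lies in $V$ and sits in a bracket that already contains $u$, so it drops out. This yields \eqref{eq:rep1}.

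\textbf{Identity \eqref{eq:rep2}.} Apply \eqref{eq:FI} with outer entries $\sigma(x_1),\dots,\sigma(x_{n-2}),u$ and inner entries $\sigma(y_1),\dots,\sigma(y_n)$. The left side is $[\sigma(x_1),\dots,\sigma(x_{n-2}),u,[\sigma(y_1),\dots,\sigma(y_n)]]$. Replacing the inner bracket by $\sigma([y_1,\dots,y_n]_\g)$ modulo $V$ and moving $u$ to the last slot gives $-\rho(x_1,\dots,x_{n-2},[y_1,\dots,y_n])u$; moving $u$ past the final entry contributes one sign. Each right-hand summand is $[\sigma(y_1),\dots,\rho(x_1,\dots,x_{n-2},y_i)u\ \text{(up to sign)},\dots,\sigma(y_n)]$. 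Moving the $V$-element from slot $i$ to slot $n$ costs $(-1)^{n-i}$, and the result is $(-1)^{n-i}\rho(y_1,\dots,\widehat{y_i},\dots,y_n)\rho(x_1,\dots,x_{n-2},y_i)u$, again with the same overall minus. Cancelling the common sign gives \eqref{eq:rep2}.

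The sign bookkeeping in this last step is the main obstacle. I would check it first at $n=2$, where it reduces to the usual Lie module axiom, and then at $n=3$ against \cite{ZhangExtending}.

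\textbf{Equivalent extensions.} Let $F$ be an equivalence and $\sigma$ a section of $p$. Because $q\circ F=p$, the map $F\circ\sigma$ is a section of $q$. Because $F\circ i=j$, the map $F$ restricts to the identity on $V$. Then
\[
\tilde\rho(X)u=[F\sigma(x_1),\dots,F\sigma(x_{n-1}),u]=F[\sigma(x_1),\dots,\sigma(x_{n-1}),u]=\rho(X)u,
\]
using that $F$ is a homomorphism and that $F u=u$. Combined with section independence, this shows that equivalent abelian extensions give the same representation.
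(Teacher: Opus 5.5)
Your proposal is correct and follows the paper's proof essentially step for step. Section independence comes from the vanishing of brackets with two $V$-entries. Identities \eqref{eq:rep1} and \eqref{eq:rep2} come from the Filippov identity applied to lifted elements together with $u$; you place $u$ last in the outer block where the paper places it first, which only changes a common sign that cancels. Invariance under equivalence comes from $F$ being a homomorphism that is the identity on $V$, with your use of $F\circ\sigma$ as a section of $q$ playing the role of the paper's arbitrary section $\sigma'$.
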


\begin{proof}
First, we show that $\rho$ is independent of
the choice of $\sigma$. In fact, if we choose another section $\sigma':\g\to\hg$, then
$$p(\sigma(x_i)-\sigma'(x_i))=x_i-x_i=0
\Longrightarrow\sigma(x_i)-\sigma'(x_i)\in \h\Longrightarrow\sigma'(x_i)=\sigma'(x_i)+u_i$$
for some $u_i\in\h$.

Since we  have $[\cdots,u,v]_{\hat{\g}}=0$
for all $u,v\in {V}$, which implies that
\begin{eqnarray*}
&&[\sigma'(x_1),\cdots,\sigma'(x_{n-1}),w]_{\hat{\g}}\\
&=&[\sigma(x_1)+u_1,\cdots,\sigma(x_{n-1})+u_{n-1},w]_{\hat{\g}}\\
&=&[\sigma(x_1),\cdots,\sigma(x_{n-1})+u_{n-1},w]_{\hat{\g}}+[u_1,\cdots,\sigma(x_{n-1})+u_{n-1},w]_{\hat{\g}}\\
&=&[\sigma(x_1),\cdots,\sigma(x_{n-1}),w]_{\hat{\g}}+\cdots+[\sigma(x_1),\cdots,u_{n-1},w]_{\hat{\g}}\\
&=&[\sigma(x_1),\cdots,\sigma(x_{n-1}),w]_{\hat{\g}},
\end{eqnarray*}
thus $\rho$ is independent on the choice of $\sigma$.

Second, we show that $\rho$ is representation of $\g$.

By the equality
\begin{eqnarray*}
&&[\si(x_1),\cdots,\si(x_{n-1}), [\si(y_1),\cdots,\si y_{n-1},u]] \\
&=&\sum_{i=1}^{n-1} [\si(y_1),\cdots,[\si(x_1),\cdots,\si(x_{n-1}),\si(y_i)],\cdots,\si y_{n-1},u]\\
&&+ [\si(y_1),\cdots,\si y_{n-1}, [\si(x_1),\cdots,\si(x_{n-1}),u]\\
&=&\sum_{i=1}^{n-1} [\si(y_1),\cdots,\si[ x_1,\cdots, x_{n-1}, y_i],\cdots,\si y_{n-1},u]
+ [\si(y_1),\cdots,\si y_{n-1}, [\si(x_1),\cdots,\si(x_{n-1}),u]
\end{eqnarray*}
we have
\begin{eqnarray*}
\rho(x_1,\cdots,x_{n-1})\rho(y_1,\cdots,y_{n-1})u&=&\rho((x_1,\cdots,x_{n-1})\circ (y_1,\cdots,y_{n-1}))u\\
&&+ \rho(y_1,\cdots,y_{n-1})\rho(x_1,\cdots,x_{n-1})u,
\end{eqnarray*}
where we use the fact that
$$\si([x_1,\cdots,x_{n-1},y_i])-[\si(x_1),\cdots,\si(x_{n-1}),\si(y_i)]\in V\cong \mathrm{Ker}(p),$$
and that $ {V}$ is abelian ideal of $\hat{\g}$,
$$[\si(y_1),\cdots,\si[x_1,\cdots,x_{n-1},y_i]-[\si(x_1),\cdots,\si(x_{n-1}),\si(y_i)],\cdots,\si y_{n-1},u]=0,$$
thus we get the condition (R1).

Similarly, by the equality
\begin{eqnarray*}
&&[u,\si x_2,\cdots,\si(x_{n-1}), [\si(y_1),\cdots,\si(y_n)]] ={\sum_{i=1}^n}\,  [\si(y_1),\cdots,[u,\si x_2,\cdots,\si(x_{n-1}),\si(y_i)],\cdots,\si(y_n)],
\end{eqnarray*}
we have
\begin{eqnarray*}
\rho(x_2,\cdots, x_{n-1},[y_1,\cdots,y_{n}])u={\sum_{i=1}^n}\,  (-1)^{n-i}\rho (y_1,\cdots,\hat{y_i},\cdots,y_n)\rho(x_2,\cdots,x_{n-1},y_i)u.
\end{eqnarray*}
thus we get the condition (R2). Therefore we see that $\rho$ is a representation of $\g$ on $\h$.

At last, suppose that $\E_{\hat{\g}}$ and $\E_{\tilde{\g}}$ are equivalent abelian extensions, and $F:\hat{\g}\to\tilde{\g}$ is the $n$-Lie algebra homomorphism satisfying $F\circ i=j$, $q\circ F=p$.
Choose linear sections $\sigma$ and $\sigma'$ of $p$ and $q$, we get $qF\sigma(x_i)=p\sigma(x_i)=x_i=q\sigma'(x_i)$,
then $F\sigma(x_i)-\sigma'(x_i)\in \Ker (q)\cong\h$. Thus, we have
$$
[\sigma(x_1),\cdots,\sigma(x_{n-1}),u]_{\hat{\g}}=[F\sigma(x_1),\cdots,F\sigma(x_{n-1}),u]_{\tilde{\g}}
=[\sigma'(x_1),\cdots,\sigma'(x_{n-1}),u]_{\tilde{\g}}.
$$
Therefore, equivalent abelian extensions give the same $\rho$. The proof is finished.
\end{proof}
\medskip

Let $\sigma:\g\to\hat{\g}$  be a
section of the abelian extension. Define the following map:
\begin{equation}\label{eq:coc}
\omega(x_1,\cdots,x_n)\triangleq[\sigma(x_1),\cdots,,\sigma(x_n)]_{\hat{\g}}-\sigma([x_1,\cdots,x_n]_\g),
\end{equation}
for all $x_1,\cdots,x_n\in\g$.

\begin{lemma}\label{thm:2-cocylce}
Let $0\to {V}{\to}\hat{\g}{\to}\g\to 0$ be an abelian extension of $\g$ by $\h$. Then $\omega$ defined by \eqref{eq:coc} is a 1-cocycle of $\g$ with coefficients in $ {V}$,
where the representation $\rho$ is given by \eqref{eq:rep}.
\end{lemma}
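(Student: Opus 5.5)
The plan is to apply the Filippov identity \eqref{eq:FI} in $\hat{\g}$ to lifted elements and read off its $V$-component. Fix a section $\sigma$ and write every bracket of lifts as
\[
[\sigma(y_1),\ldots,\sigma(y_n)]_{\hat{\g}}=\sigma([y_1,\ldots,y_n]_\g)+\omega(y_1,\ldots,y_n),
\]
which is just \eqref{eq:coc}. First note that $\omega$ is alternating, since both brackets involved are alternating, and that $\omega$ takes values in $V=\Ker(p)$ because $p$ is a homomorphism and $p\sigma=\id$. So $\omega$ is a 1-cochain in $\Hom(\wedge^n\g,V)$.

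Next, write \eqref{eq:FI} in $\hat{\g}$ for $\sigma(x_1),\ldots,\sigma(x_{n-1}),\sigma(y_1),\ldots,\sigma(y_n)$.

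\emph{Left-hand side.} Substituting the decomposition gives
\[
[\sigma(x_1),\ldots,\sigma(x_{n-1}),\sigma([y_1,\ldots,y_n])]_{\hat{\g}}+[\sigma(x_1),\ldots,\sigma(x_{n-1}),\omega(y_1,\ldots,y_n)]_{\hat{\g}}.
\]
The first term equals $\sigma([x_1,\ldots,x_{n-1},[y_1,\ldots,y_n]])+\omega(x_1,\ldots,x_{n-1},[y_1,\ldots,y_n])$. By \eqref{eq:rep}, the second term is $\rho(x_1,\ldots,x_{n-1})\omega(y_1,\ldots,y_n)$.

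\emph{Right-hand side.} The $i$-th summand is
\[
[\sigma(y_1),\ldots,\sigma([x_1,\ldots,x_{n-1},y_i])+\omega(x_1,\ldots,x_{n-1},y_i),\ldots,\sigma(y_n)]_{\hat{\g}}.
\]
Its $\sigma$-part contributes $\sigma([y_1,\ldots,[x,y_i],\ldots,y_n])+\omega(y_1,\ldots,[x,y_i],\ldots,y_n)$. Its $\omega$-part is a bracket with exactly one entry in $V$. Moving that entry to the last slot costs the sign $(-1)^{n-i}$, and then \eqref{eq:rep} gives $(-1)^{n-i}\rho(y_1,\ldots,\widehat{y_i},\ldots,y_n)\omega(x_1,\ldots,x_{n-1},y_i)$. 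Only one $V$-entry ever occurs in these brackets, so the abelian-ideal hypothesis $[\ldots,u,v]=0$ is never needed beyond its role in making $\rho$ well defined (Lemma \ref{pro:2-modules}).

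The $\sigma$-parts on the two sides cancel because \eqref{eq:FI} holds in $\g$ and $\sigma$ is linear. Comparing what remains in $V$ yields exactly \eqref{eq:one-cocycle}, with $\rho$ the representation of Lemma \ref{pro:2-modules}.

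The main care point is the sign bookkeeping in that last step: checking that the transposition moving the $V$-entry from position $i$ to position $n$ gives precisely $(-1)^{n-i}$, matching the convention in \eqref{eq:one-cocycle} and in the semidirect product. One should also note that the sums over $i$ in \eqref{eq:one-cocycle} run over $1\le i\le n$, which agrees with the $n$ summands of \eqref{eq:FI}. Everything else is direct substitution.
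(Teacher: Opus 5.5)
Your proposal is correct and follows the paper's proof. Both apply the Filippov identity in $\hat{\g}$ to the lifts $\sigma(x_a),\sigma(y_i)$, expand each inner bracket as $\sigma(\cdot)+\omega(\cdot)$, and convert the single-$V$-entry brackets via \eqref{eq:rep}, picking up the sign $(-1)^{n-i}$. The $\sigma$-terms then cancel by \eqref{eq:FI} in $\g$. The only cosmetic difference is order: the paper moves the inner bracket to the last slot before substituting, while you substitute first. Your side remark is also accurate: the identity itself uses only the common section $\sigma$, and abelianness enters only through Lemma~\ref{pro:2-modules}.
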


\begin{proof}
By the equality
\begin{eqnarray*}
[\si(x_1),\cdots,\si(x_{n-1}), [\si(y_1),\cdots,\si(y_n)]]
&=&{\sum_{i=1}^n}\,  [\si(y_1),\cdots,[\si(x_1),\cdots,\si(x_{n-1}),\si(y_i)],\cdots,\si(y_n)],
\end{eqnarray*}
we get that the left hand side is equal to
\begin{eqnarray*}
&=&[\si(x_1),\cdots,\si(x_{n-1}), \omega(y_1,\cdots,y_n)+\sigma([y_1,\cdots,y_n])] \\
&=&\rho(x_1,\cdots,x_{n-1})\omega(y_1,\cdots,y_n)+[\si(x_1),\cdots,\si(x_{n-1}),\sigma([y_1,\cdots,y_n])]\\
&=&\rho(x_1,\cdots,x_{n-1})\omega(y_1,\cdots,y_n)+\omega(x_1,\cdots,x_{n-1},[y_1,\cdots,y_n])\\
&&+\sigma([x_1,\cdots,x_{n-1},[y_1,\cdots,y_n]]).
\end{eqnarray*}
Similarily, the right hand side is equal to
\begin{eqnarray*}
&=&{\sum_{i=1}^n}\, (-1)^{n-i}[\si(y_1),\cdots,\widehat{\si(y_i)},\cdots,\si(y_n), [\si(x_1),\cdots,\si(x_{n-1}),\si(y_i)]]\\
&=&{\sum_{i=1}^n}\, (-1)^{n-i}[\si(y_1),\cdots,\widehat{\si(y_i)},\cdots,\si(y_n),,\omega(x_1,\cdots,x_{n-1},y_i)+\sigma([x_1,\cdots, x_{n-1},y_i])] \\
&=&{\sum_{i=1}^n}\, (-1)^{n-i}\rho(y_1,\cdots,\hat{y_i},\cdots,y_n)\omega(x_1,\cdots,x_{n-1},y_i)\\
&&+\omega({\sum_{i=1}^n}\, [y_1,\cdots,[x_1,\cdots,x_{n-1},y_i],\cdots,y_n]])\\
&&+\sigma({\sum_{i=1}^n}\, [y_1,\cdots,[x_1,\cdots,x_{n-1},y_i],\cdots,y_n]])\\
\end{eqnarray*}
Thus we have
\begin{eqnarray*}
&& \omega(x_1,\cdots,x_{n-1},[y_1,\cdots,y_{n}])+\rho(x_1,\cdots,x_{n-1})\omega(y_1,\cdots,y_{n})\\
&=&\omega({\sum_{i=1}^n}\, [y_1,\cdots,[x_1,\cdots,x_{n-1},y_i],\cdots,y_n]])\\
&&+{\sum_{i=1}^n}\, (-1)^{n-i}\rho(y_1,\cdots,\hat{y_i},\cdots,y_n)\omega(x_1,\cdots,x_{n-1},y_i)
\end{eqnarray*}
This is exactly the 1-cocycle condition in Definition \ref{def:1coc}.
\end{proof}
\medskip

Now the $n$-Lie algebra structure on $\hat{\g}$ can be transferred to the $n$-Lie algebra structure on $\g\oplus {V}$ using the 1-cocycle given above.
See \cite{BLW} for a direct computation of the following Lemma.

\begin{lemma}[\cite{BLW}]
Let $\g$ be an $n$-Lie algebra and $(V, \rho)$ be an $\g$-module. If $\omega: \wedge^n \g\to V$ is a 1-cocycle,
then $\g\oplus V$ is an $n$-Lie algebra under the following $n$-bracket:
\begin{eqnarray}
\notag[x_1 + u_1, \cdots, x_n + u_n]_\omega &=&[x_1,\cdots,x_{n}] + \omega(x_1,\cdots,x_{n})\\
&&+{\sum_{i=1}^n}\, (-1)^{n-i} \rho (x_1,\cdots, \hat{x_i},\cdots, x_n)(u_i),
\end{eqnarray}
where $x_1,\cdots,x_{n} \in \g$ and $u_1, \cdots, u_n \in V$. This  is  also called the semidirect product of $n$-Lie algebra $\g$ and $V$.
\end{lemma}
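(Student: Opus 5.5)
The plan is to verify directly that the bracket $[\cdot,\ldots,\cdot]_\omega$ is alternating and satisfies the Filippov identity \eqref{eq:FI}, reducing each case to one of the defining identities \eqref{eq:rep1}, \eqref{eq:rep2} or \eqref{eq:one-cocycle}.

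\textbf{Alternation.} The $\g$-part $[x_1,\ldots,x_n]$ and the term $\omega(x_1,\ldots,x_n)$ are alternating by hypothesis. The term $\sum_{i}(-1)^{n-i}\rho(x_1,\ldots,\widehat{x_i},\ldots,x_n)(u_i)$ is exactly what one gets by alternating the expression $\rho(x_1,\ldots,x_{n-1})u_n$, which has a $V$-entry only in the last slot. This uses that $\rho$ is defined on $\wedge^{n-1}\g$. Checking a transposition of adjacent entries is a short sign count, which I will just record.

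\textbf{Reduction of the Filippov identity.} Both sides of \eqref{eq:FI} are multilinear in the $2n-1$ entries $x_1+u_1,\ldots,x_{n-1}+u_{n-1},\,y_1+v_1,\ldots,y_n+v_n$, so it suffices to check the identity when each entry lies in $\g$ or in $V$. The bracket vanishes as soon as two entries lie in $V$, and a bracket containing one $V$-entry lands in $V$. Hence any monomial with at least two $V$-entries vanishes on both sides: either the inner or the outer bracket already has two $V$-entries. Using alternation in the $x$'s and in the $y$'s, three cases remain.

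\begin{enumerate}
\item[(a)] \emph{All entries lie in $\g$.} The $\g$-component is the Filippov identity of $\g$. The $V$-component of the left side is
\[
\omega(x_1,\ldots,x_{n-1},[y_1,\ldots,y_n])+\rho(x_1,\ldots,x_{n-1})\,\omega(y_1,\ldots,y_n).
\]
On the right side, each term $[y_1,\ldots,\omega(x_1,\ldots,x_{n-1},y_i),\ldots,y_n]_\omega$ becomes $(-1)^{n-i}\rho(y_1,\ldots,\widehat{y_i},\ldots,y_n)\,\omega(x_1,\ldots,x_{n-1},y_i)$. The $V$-component of the right side is therefore exactly the right-hand side of \eqref{eq:one-cocycle}.
\item[(b)] \emph{Exactly one $V$-entry, placed at $y_n=v$.} The left side is $\rho(x_1,\ldots,x_{n-1})\rho(y_1,\ldots,y_{n-1})v$. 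On the right side, the terms with $i<n$ give $\rho(y_1,\ldots,[x_1,\ldots,x_{n-1},y_i],\ldots,y_{n-1})v$, and the term $i=n$ gives $\rho(y_1,\ldots,y_{n-1})\rho(x_1,\ldots,x_{n-1})v$. Thus the identity is equivalent to \eqref{eq:rep1}.
\item[(c)] \emph{Exactly one $V$-entry, placed at $x_{n-1}=u$.} Writing $u$ in the second-to-last slot produces the sign $(-1)^{n-(n-1)}=-1$. The left side is $-\rho(x_1,\ldots,x_{n-2},[y_1,\ldots,y_n])u$. The $i$-th term on the right side is $-[y_1,\ldots,\rho(x_1,\ldots,x_{n-2},y_i)u,\ldots,y_n]$, which equals $-(-1)^{n-i}\rho(y_1,\ldots,\widehat{y_i},\ldots,y_n)\rho(x_1,\ldots,x_{n-2},y_i)u$. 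Cancelling the common minus sign gives exactly \eqref{eq:rep2}.
\end{enumerate}

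\textbf{Main obstacle.} The only delicate point is sign bookkeeping. This concerns the alternation claim, and cases (a) and (c), where a $V$-entry must be moved to the last slot inside an inner bracket. I will do this carefully once, via the identity $[z_1,\ldots,z_{i-1},w,z_{i+1},\ldots,z_n]=(-1)^{n-i}[z_1,\ldots,\widehat{z_i},\ldots,z_n,w]$, and reuse it. This mirrors the computations already carried out in the proofs of \cref{pro:2-modules} and \cref{thm:2-cocylce}, now run in reverse.
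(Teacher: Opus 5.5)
Your proposal is correct: the alternation check, the vanishing of every monomial with two or more $V$-entries, and the three remaining homogeneous cases reduce exactly to \eqref{eq:one-cocycle}, \eqref{eq:rep1} and \eqref{eq:rep2} with the signs you state. The paper gives no proof of its own and instead refers to the direct computation in \cite{BLW}, which is essentially the case-by-case verification you carry out.
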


\begin{lemma}\label{thm:2-cocylce}
Two abelian extensions of $n$-Lie algebras
$$0\to {V}{\to}\g\oplus_\omega {V}{\to}\g\to 0\quad\text{and}\quad0\to {V}{\to}\g\oplus_{\omega'} {V}{\to}\g\to 0$$
are equivalent if and only if $\omega$ and $\omega'$ are in the same cohomology class.
\end{lemma}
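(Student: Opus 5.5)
We prove both directions by comparing brackets on $\g\oplus V$ under a linear map of the shape forced by the commutative diagram \eqref{diagram:equivalent}. Any linear $F:\g\oplus_\omega V\to\g\oplus_{\omega'}V$ with $F\circ i=j$ and $q\circ F=p$ must satisfy $F(u)=u$ for $u\in V$. It must also satisfy $q(F(x))=x$ for $x\in\g$. Hence $F$ has the form
\[
F(x+u)=x+\nu(x)+u
\]
for a unique linear map $\nu\in\Hom(\g,V)$. Conversely, every such $\nu$ gives a linear bijection making the diagram commute. So equivalence reduces to asking when $F_\nu$ is an $n$-Lie homomorphism.

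The main step is to expand $F_\nu[x_1+u_1,\ldots,x_n+u_n]_\omega$ and $[F_\nu(x_1+u_1),\ldots,F_\nu(x_n+u_n)]_{\omega'}$.

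For the left side, the bracket gives
\[
[x_1,\ldots,x_n]+\omega(x_1,\ldots,x_n)+\sum_{i}(-1)^{n-i}\rho(x_1,\ldots,\widehat{x_i},\ldots,x_n)u_i .
\]
Applying $F_\nu$ adds the term $\nu([x_1,\ldots,x_n])$.

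For the right side, the $\g$-components are still $x_i$ and the $V$-components are now $u_i+\nu(x_i)$. The bracket $[\cdot,\ldots,\cdot]_{\omega'}$ is linear in the $V$-slots and vanishes on two or more $V$-entries, since $V$ is an abelian ideal. So the right side equals
\[
[x_1,\ldots,x_n]+\omega'(x_1,\ldots,x_n)+\sum_i(-1)^{n-i}\rho(x_1,\ldots,\widehat{x_i},\ldots,x_n)\bigl(u_i+\nu(x_i)\bigr).
\]
The representation $\rho$ is the same in both extensions, by Lemma \ref{pro:2-modules} and the given semidirect form.

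The $\g$-components and the $\rho(\cdots)u_i$ terms cancel. The homomorphism condition is therefore equivalent to
\[
\omega(x_1,\ldots,x_n)-\omega'(x_1,\ldots,x_n)=\sum_i(-1)^{n-i}\rho(x_1,\ldots,\widehat{x_i},\ldots,x_n)\nu(x_i)-\nu([x_1,\ldots,x_n]) .
\]
By \eqref{eq:1coc}, this is exactly $\omega-\omega'=d^0\nu$.

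Hence, if the extensions are equivalent via some $F$, the associated $\nu$ shows that $\omega-\omega'$ is a 1-coboundary, so $\omega$ and $\omega'$ are cohomologous. Conversely, if $\omega-\omega'=d^0\nu$, then $F_\nu$ is a homomorphism. It is invertible with inverse $F_{-\nu}$ and makes the diagram commute, so the extensions are equivalent.

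The only real obstacle is the sign and term bookkeeping when expanding the bracket of $F_\nu$-images. The point to check carefully is that all terms with two or more $V$-entries vanish, including the $\omega'$ terms, which see only $\g$-components. Also, the sign $(-1)^{n-i}$ must match the convention used in \eqref{eq:1coc}. Once that is confirmed, the rest is a direct comparison.
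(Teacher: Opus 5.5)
Your proof is correct and follows the same route as the paper: you write the equivalence as $F(x+u)=x+\nu(x)+u$, compare the two brackets, and read off $\omega-\omega'=d^0\nu$ from \eqref{eq:1coc}. Your version is slightly more complete than the paper's. The paper compares brackets only on pure $\g$-inputs and proves only the direction ``equivalent $\Rightarrow$ cohomologous''; you also check that the $\rho(\cdots)u_i$ terms cancel on general inputs, and for the converse you construct the equivalence $F_\nu$ with inverse $F_{-\nu}$.
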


\begin{proof} Let $F:\g\oplus_\omega\h\to \g\oplus_{\omega'}\h$ be the corresponding homomorphism, then
\begin{eqnarray}\label{hhh}
&&F[x_1,\cdots,x_{n}]_{\omega}=[F(x_1),\cdots,F(x_n)]_{\omega'}.
\end{eqnarray}
Since $F$ is an equivalence of extensions, there exist $\nu:\g\to  {V}$ such that
$$F(x_i+u)=x_i+\nu(x_i)+u,\quad i=1,\cdots,n.$$
The left hand side of \eqref{hhh} is equal to
\begin{eqnarray*}
&=&F_1([x_1,\cdots,x_{n}]+\omega(x_1,\cdots,x_{n}))\\
&=&[x_1,\cdots,x_{n}]+\omega(x_1,\cdots,x_{n})+\nu([x_1,\cdots,x_{n}]),
\end{eqnarray*}
and the right hand side of \eqref{hhh} is equal to
\begin{eqnarray*}
&=&[x_1+\nu(x_1),\cdots,x_n+\nu(x_n)]_{\omega'}\\
&=&[x_1,\cdots,x_n]+\omega'(x_1,\cdots,x_n)+{\sum_{i=1}^n}\, (-1)^{n-i}\rho(x_1,\cdots,\hat{x_i}\cdots,x_n)\nu(x_i).
\end{eqnarray*}
Thus we have
\begin{eqnarray}\label{eq:exact4}
\nonumber(\omega-\omega')(x_1,\cdots,x_{n})&=&{\sum_{i=1}^n}\, (-1)^{n-i}\rho(x_1,\cdots,\hat{x_i}\cdots,x_n)\nu(x_i)-\nu([x_1,\cdots,x_{n}]),
\end{eqnarray}
that is $\omega-\omega'=d\nu$. Therefore $\omega$ and $\omega'$ are in the same cohomology class.
\end{proof}

Using the above lemmas, we obtain the main result of this subsection.
\begin{theorem}
Given a representation $\rho:\g\to \End({V})$,  there is a one-to-one correspondence between equivalence classes of abelian extensions of $n$-Lie algebra $\g$ by ${V}$ and the first cohomology group $\mathbf{H}^1(\g,{V})$.
\end{theorem}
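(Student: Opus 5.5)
We construct two maps and show they are mutually inverse. Fix the representation $(V,\rho)$. Let $\Ext_\rho(\g,V)$ be the equivalence classes of abelian extensions whose induced representation (via \eqref{eq:rep}) is $\rho$. By Lemma \ref{pro:2-modules} this $\rho$ is independent of the section and is constant on equivalence classes, so the set is well defined.

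\emph{The map $\Theta:\Ext_\rho(\g,V)\to\mathbf H^1(\g,V)$.} Given an abelian extension $\hat\g$ and a section $\sigma$, send $[\hat\g]$ to the class of the map $\omega$ defined in \eqref{eq:coc}. This $\omega$ is a $1$-cocycle by Lemma \ref{thm:2-cocylce}. We must check independence of the section. If $\sigma'=\sigma+\nu$ with $\nu:\g\to V$, expand $[\sigma'(x_1),\dots,\sigma'(x_n)]$ by multilinearity. Since $V$ is an abelian ideal, every term containing two or more entries from $V$ vanishes. Each term with exactly one $\nu(x_i)$ equals $(-1)^{n-i}\rho(x_1,\dots,\widehat{x_i},\dots,x_n)\nu(x_i)$ after moving $\nu(x_i)$ to the last slot. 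Hence $\omega'-\omega=d^0\nu$ by \eqref{eq:1coc}.

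Next we check invariance under equivalence. If $F:\hat\g\to\tilde\g$ is an equivalence, then $F\circ\sigma$ is a section of $q$, and $F$ is the identity on $V$. Applying $F$ to \eqref{eq:coc} gives the same $\omega$ for $\tilde\g$ computed with the section $F\sigma$. Combined with section-independence, $\Theta$ is well defined.

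\emph{The map $\Psi:\mathbf H^1(\g,V)\to\Ext_\rho(\g,V)$.} Send $[\omega]$ to $\g\oplus_\omega V$, which is an $n$-Lie algebra by the lemma of \cite{BLW}. Three things need checking:
\begin{itemize}
\item $V$ is an abelian ideal, since by the displayed bracket any bracket with two $V$-entries is zero.
\item The induced representation is $\rho$, using the section $x\mapsto x$.
\item By Lemma \ref{thm:2-cocylce} (the equivalence criterion), cohomologous cocycles give equivalent extensions.
\end{itemize}
For the third point, that lemma's proof only treats one direction. I would add the converse: if $\omega-\omega'=d^0\nu$, then $F(x+u)=x+\nu(x)+u$ is a homomorphism. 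This follows by reading the same computation backwards, and $F$ clearly commutes with the inclusions and projections.

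\emph{Mutual inverses.} For $\Theta\circ\Psi=\id$: with the section $x\mapsto x$ in $\g\oplus_\omega V$, formula \eqref{eq:coc} returns exactly $\omega$. For $\Psi\circ\Theta=\id$: given $\hat\g$ with section $\sigma$, define the linear isomorphism $\Phi:\g\oplus V\to\hat\g$ by $\Phi(x+u)=\sigma(x)+u$. Expanding $[\sigma(x_1)+u_1,\dots,\sigma(x_n)+u_n]_{\hat\g}$ by multilinearity and using that $V$ is abelian, the result is $\sigma([x_1,\dots,x_n])+\omega(x_1,\dots,x_n)+\sum_i(-1)^{n-i}\rho(\dots)u_i$. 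This is $\Phi$ applied to the $\omega$-bracket, so $\Phi$ is an equivalence of extensions.

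The main obstacle is bookkeeping rather than ideas. The sign $(-1)^{n-i}$ arising from moving a $V$-entry to the last slot must be tracked consistently in both the section-change computation and the transport computation. One must also make explicit the converse direction of the equivalence criterion, which the earlier proof leaves implicit.
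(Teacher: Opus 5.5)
Your proof is correct and follows the paper's route. The paper proves the theorem only by ``using the above lemmas'': the section-independent induced representation, the cocycle $\omega$ of \eqref{eq:coc}, the cocycle-twisted semidirect product $\g\oplus_\omega V$, and the equivalence criterion. Your version also makes explicit the steps the paper leaves implicit, and all of them check out: section-independence of $[\omega]$ via $\omega'-\omega=d^0\nu$, the converse half of the equivalence lemma via $F(x+u)=x+\nu(x)+u$, and the transport $\Phi(x+u)=\sigma(x)+u$ identifying every abelian extension with some $\g\oplus_\omega V$.
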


\section{The action-form unified product}
\label{sec:unified}

Let \((\g,[-,\ldots,-]_{\g})\) be an \(n\)-Lie algebra and let \(V\)
be a vector space.  In \(\rho_{\g}\) and \(\rho_V\), the subscript names
the factor supplying the \(n-1\) acting variables.  In \(\vartheta_V\)
and \(\vartheta_{\g}\), it names the factor supplying the unique
distinguished variable.  This convention keeps the two representation
maps visibly distinct from the remaining extreme mixed components.

\begin{definition}\label{def:action-datum}
An \emph{extreme action datum} of \(\g\) through \(V\) is a sextuple
\[
 \Omega=(\rho_{\g},\rho_V,\vartheta_V,\vartheta_{\g},
          \omega,\{- ,\ldots,-\}_V)
\]
consisting of alternating linear maps
\begin{align*}
 \rho_{\g}&:\bigwedge^{n-1}\g\longrightarrow\operatorname{End}(V),
 &\rho_V&:\bigwedge^{n-1}V\longrightarrow
          \operatorname{End}(\g),\\
 \vartheta_V&:V\longrightarrow
          \Hom(\bigwedge^{n-1}\g,\g),
 &\vartheta_{\g}&:\g\longrightarrow
          \Hom(\bigwedge^{n-1}V,V),\\
 \omega&:\bigwedge^nV\longrightarrow\g,
 &\{- ,\ldots,-\}_V&:\bigwedge^nV\longrightarrow V.
\end{align*}
No Filippov identity is initially imposed on \(\{- ,\ldots,-\}_V\),
and none of the four action maps is initially assumed to be a
representation.
\end{definition}

The defining homogeneous brackets are
\begin{align}
 [u,x_2,\ldots,x_n]_{\Omega}
 &=\vartheta_V(u)(x_2,\ldots,x_n)
   +(-1)^{n-1}\rho_{\g}(x_2,\ldots,x_n)u,
\label{eq:def-rho-actions}\\
[u_1,\ldots,u_{n-1},x]_{\Omega}
 &=\rho_V(u_1,\ldots,u_{n-1})x
   +\vartheta_{\g}(x)(u_1,\ldots,u_{n-1}),
\label{eq:def-vartheta-actions}\\
 [u_1,\ldots,u_n]_{\Omega}
 &=\omega(u_1,\ldots,u_n)+\{u_1,\ldots,u_n\}_V.
\label{eq:def-pure-V}
\end{align}
Here and below the first summand of a displayed value belongs to \(\g\)
and the second belongs to \(V\).  Brackets containing between two and
\(n-2\) entries of \(V\) are declared to be zero.  This last declaration
is void when \(n=3\).

\begin{proposition}[Direct expansion of the bracket]
\label{prop:direct-bracket}
There is a unique alternating multilinear operation on \(\g\oplus V\)
with the homogeneous values \eqref{eq:def-rho-actions}--
\eqref{eq:def-pure-V}.  For \(x_i\in\g\) and \(u_i\in V\), it is
\begin{align}
&[x_1+u_1,\ldots,x_n+u_n]_{\Omega}
   =G(x_1,u_1;\ldots;x_n,u_n)
    +H(x_1,u_1;\ldots;x_n,u_n),
\label{eq:action-direct-expansion}
\end{align}
where
\begin{align}
G(x_1,u_1;\ldots;x_n,u_n)
={}&[x_1,\ldots,x_n]_{\g}+\omega(u_1,\ldots,u_n)
\notag\\
&+\sum_{i=1}^{n}(-1)^{i-1}
  \vartheta_V(u_i)(x_1,\ldots,\widehat{x_i},\ldots,x_n)
\notag\\
&+\sum_{i=1}^{n}(-1)^{n-i}
  \rho_V(u_1,\ldots,\widehat{u_i},\ldots,u_n)x_i,
\label{eq:G-component}\\
H(x_1,u_1;\ldots;x_n,u_n)
={}&\{u_1,\ldots,u_n\}_V
\notag\\
&+\sum_{i=1}^{n}(-1)^{n-i}
  \rho_{\g}(x_1,\ldots,\widehat{x_i},\ldots,x_n)u_i
\notag\\
&+\sum_{i=1}^{n}(-1)^{n-i}
  \vartheta_{\g}(x_i)
  (u_1,\ldots,\widehat{u_i},\ldots,u_n).
\label{eq:H-component}
\end{align}
\end{proposition}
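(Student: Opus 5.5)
The argument rests on multilinearity together with the decomposition
\[
\bigwedge\nolimits^n(\g\oplus V)\;\cong\;\bigoplus_{k=0}^{n}\bigwedge\nolimits^{n-k}\g\otimes\bigwedge\nolimits^{k}V .
\]
An alternating multilinear map on \(\g\oplus V\) is therefore the same thing as a family of maps, one on each summand. On the summand with \(k\) entries in \(V\), the map is determined by its values on tuples in which the \(V\)-entries are listed first, followed by the \(\g\)-entries, and is extended by antisymmetry.

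The homogeneous values prescribed in \eqref{eq:def-rho-actions}--\eqref{eq:def-pure-V} are of exactly this kind. For \(k=0\) we take \([-,\ldots,-]_\g\); for \(k=1\) we take \eqref{eq:def-rho-actions}; for \(k=n-1\) we take \eqref{eq:def-vartheta-actions}; for \(k=n\) we take \eqref{eq:def-pure-V}; and for \(2\le k\le n-2\) the value is zero. Each of these is already alternating in the \(V\)-slots and in the \(\g\)-slots separately, because the maps \(\rho_\g,\rho_V,\vartheta_V,\vartheta_\g,\omega,\{\cdot\}_V\) are alternating. Consequently each extends uniquely to an alternating map on its summand, and uniqueness and existence follow together. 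When \(n=2\) the degrees \(k=1\) and \(k=n-1\) coincide. Since the paper assumes \(n\ge3\) in this section implicitly (the two definitions must agree), I would note this case separately, or assume \(n\ge 3\).

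For the explicit formula, expand \([x_1+u_1,\ldots,x_n+u_n]_\Omega\) multilinearly into \(2^n\) terms, indexed by the subset \(S\subseteq\{1,\ldots,n\}\) of positions where \(u_i\) is chosen. Only \(|S|\in\{0,1,n-1,n\}\) contribute. The cases \(|S|=0\) and \(|S|=n\) give \([x_1,\ldots,x_n]_\g\), \(\omega(u_1,\ldots,u_n)\) and \(\{u_1,\ldots,u_n\}_V\) immediately.

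For \(S=\{i\}\), move \(u_i\) from position \(i\) to the front using \(i-1\) transpositions. This gives
\[
(-1)^{i-1}[u_i,x_1,\ldots,\widehat{x_i},\ldots,x_n]_\Omega,
\]
and by \eqref{eq:def-rho-actions} this equals
\[
(-1)^{i-1}\vartheta_V(u_i)(\ldots)+(-1)^{i-1}(-1)^{n-1}\rho_\g(\ldots)u_i,
\]
where \((-1)^{i-1+n-1}=(-1)^{n-i}\).

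For \(S=\{1,\ldots,n\}\setminus\{i\}\), move \(x_i\) from position \(i\) to the last position using \(n-i\) transpositions. This gives
\[
(-1)^{n-i}[u_1,\ldots,\widehat{u_i},\ldots,u_n,x_i]_\Omega,
\]
which \eqref{eq:def-vartheta-actions} splits into the \(\rho_V\)-term and the \(\vartheta_\g\)-term.

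Collecting the \(\g\)-components gives \(G\) and collecting the \(V\)-components gives \(H\). There is no real obstacle; the only care needed is the sign bookkeeping for the transpositions just described.
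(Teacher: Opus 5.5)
Your proposal is correct and follows the paper's proof: expand multilinearly, discard the degrees \(2\le k\le n-2\), and use the transposition counts \((-1)^{i-1}\) and \((-1)^{n-i}\) for the single \(V\)-entry and the single \(\g\)-entry. Your decomposition \(\bigwedge^n(\g\oplus V)\cong\bigoplus_k\bigwedge^kV\otimes\bigwedge^{n-k}\g\) also supplies the existence (alternation) and uniqueness argument that the paper's proof takes for granted. Your caveat is justified as well: for \(n=2\), both \eqref{eq:def-rho-actions} and \eqref{eq:def-vartheta-actions} prescribe \([u,x]_{\Omega}\), so they must agree, and even then the displayed \(G+H\) counts every nonzero mixed term twice, so the statement genuinely needs \(n\ge3\).
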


\begin{proof}
Let \(x_1,\ldots,x_n\in\g\) and \(u_1,\ldots,u_n\in V\).
Multilinearity expands the bracket into its homogeneous mixed parts:
\begin{align*}
&[x_1+u_1,\ldots,x_n+u_n]_{\Omega}\\
&=[x_1,\ldots,x_n]_{\g}
 +\sum_{i=1}^{n}
 [x_1,\ldots,x_{i-1},u_i,x_{i+1},\ldots,x_n]_{\Omega}\\
&\quad+\sum_{i=1}^{n}
 [u_1,\ldots,u_{i-1},x_i,u_{i+1},\ldots,u_n]_{\Omega}
 +[u_1,\ldots,u_n]_{\Omega},
\end{align*}
where every term containing between two and \(n-2\) entries of \(V\)
vanishes by definition:
\[
[u_1,\ldots,u_k,x_{k+1},\ldots,x_n]_{\Omega}=0
\qquad(2\leq k\leq n-2),
\]
with no such term when \(n=3\).  For a unique \(V\)-entry, moving it
to the first or last distinguished position gives, respectively, the
signs \((-1)^{i-1}\) and \((-1)^{n-i}\).  For a unique \(\g\)-entry,
the move to the last position gives \((-1)^{n-i}\).  Hence
\begin{align*}
&[x_1,\ldots,x_{i-1},u_i,x_{i+1},\ldots,x_n]_{\Omega}\\
&=(-1)^{i-1}\vartheta_V(u_i)
 (x_1,\ldots,\widehat{x_i},\ldots,x_n)
 +(-1)^{n-i}\rho_{\g}
 (x_1,\ldots,\widehat{x_i},\ldots,x_n)u_i,\\
&[u_1,\ldots,u_{i-1},x_i,u_{i+1},\ldots,u_n]_{\Omega}\\
&=(-1)^{n-i}\rho_V
 (u_1,\ldots,\widehat{u_i},\ldots,u_n)x_i
 +(-1)^{n-i}\vartheta_{\g}(x_i)
 (u_1,\ldots,\widehat{u_i},\ldots,u_n),\\
&[u_1,\ldots,u_n]_{\Omega}
 =\omega(u_1,\ldots,u_n)+\{u_1,\ldots,u_n\}_V,
\end{align*}
and substitution gives
\[
[x_1+u_1,\ldots,x_n+u_n]_{\Omega}
 =G(x_1,u_1;\ldots;x_n,u_n)
  +H(x_1,u_1;\ldots;x_n,u_n).
\]
\end{proof}

When the operation in \eqref{eq:action-direct-expansion} is an
\(n\)-Lie bracket, the resulting algebra is denoted by
\(\g\natprod_{\Omega}V\) and is called the \emph{action-form unified
product}.  Its compatibility criterion will now be obtained without a
decomposition by homogeneous degree.

\subsection{Direct calculation with the defining Filippov identity}

Take arbitrary elements
\[
 \begin{gathered}
 x_1,\ldots,x_{n-1},y_1,\ldots,y_n\in\g,\qquad
 u_1,\ldots,u_{n-1},v_1,\ldots,v_n\in V,\\
 z_a=x_a+u_a\quad(1\leq a\leq n-1),\qquad
 w_i=y_i+v_i\quad(1\leq i\leq n).
 \end{gathered}
\]
The two components of the inner bracket on the left-hand side of
\eqref{eq:FI} are
\begin{align}
G_0={}&[y_1,\ldots,y_n]_{\g}+\omega(v_1,\ldots,v_n)
\notag\\
&+\sum_{j=1}^{n}(-1)^{j-1}
 \vartheta_V(v_j)(y_1,\ldots,\widehat{y_j},\ldots,y_n)
\notag\\
&+\sum_{j=1}^{n}(-1)^{n-j}
 \rho_V(v_1,\ldots,\widehat{v_j},\ldots,v_n)y_j,
\label{eq:G0}\\
H_0={}&\{v_1,\ldots,v_n\}_V
\notag\\
&+\sum_{j=1}^{n}(-1)^{n-j}
 \rho_{\g}(y_1,\ldots,\widehat{y_j},\ldots,y_n)v_j
\notag\\
&+\sum_{j=1}^{n}(-1)^{n-j}
 \vartheta_{\g}(y_j)
 (v_1,\ldots,\widehat{v_j},\ldots,v_n).
\label{eq:H0}
\end{align}
Thus \([w_1,\ldots,w_n]_{\Omega}=G_0+H_0\).

For \(1\leq i\leq n\), direct substitution into the inner bracket in
the \(i\)-th summand on the right-hand side of \eqref{eq:FI} gives
\begin{align}
G_i={}&[x_1,\ldots,x_{n-1},y_i]_{\g}
 +\omega(u_1,\ldots,u_{n-1},v_i)
\notag\\
&+\sum_{a=1}^{n-1}(-1)^{a-1}
 \vartheta_V(u_a)
 (x_1,\ldots,\widehat{x_a},\ldots,x_{n-1},y_i)
\notag\\
&+(-1)^{n-1}\vartheta_V(v_i)(x_1,\ldots,x_{n-1})
\notag\\
&+\sum_{a=1}^{n-1}(-1)^{n-a}
 \rho_V(u_1,\ldots,\widehat{u_a},\ldots,u_{n-1},v_i)x_a
\notag\\
&+\rho_V(u_1,\ldots,u_{n-1})y_i,
\label{eq:Gi}\\
H_i={}&\{u_1,\ldots,u_{n-1},v_i\}_V
\notag\\
&+\sum_{a=1}^{n-1}(-1)^{n-a}
 \rho_{\g}(x_1,\ldots,\widehat{x_a},\ldots,x_{n-1},y_i)u_a
 +\rho_{\g}(x_1,\ldots,x_{n-1})v_i
\notag\\
&+\sum_{a=1}^{n-1}(-1)^{n-a}
 \vartheta_{\g}(x_a)
 (u_1,\ldots,\widehat{u_a},\ldots,u_{n-1},v_i)
\notag\\
&+\vartheta_{\g}(y_i)(u_1,\ldots,u_{n-1}).
\label{eq:Hi}
\end{align}
Hence
\[
 [z_1,\ldots,z_{n-1},w_i]_{\Omega}=G_i+H_i.
\]

\begin{theorem}[Action-form unified-product criterion]
\label{thm:reduced-unified-product}
The operation \eqref{eq:action-direct-expansion} is an \(n\)-Lie
bracket on \(\g\oplus V\) if and only if the following two identities
hold for all the elements chosen above:
\begin{align}
&G(x_1,u_1;\ldots;x_{n-1},u_{n-1};G_0,H_0)
\notag\\
&\quad=\sum_{i=1}^{n}
G(y_1,v_1;\ldots;y_{i-1},v_{i-1};G_i,H_i;
  y_{i+1},v_{i+1};\ldots;y_n,v_n),
\label{eq:direct-FI-g}\\
&H(x_1,u_1;\ldots;x_{n-1},u_{n-1};G_0,H_0)
\notag\\
&\quad=\sum_{i=1}^{n}
H(y_1,v_1;\ldots;y_{i-1},v_{i-1};G_i,H_i;
  y_{i+1},v_{i+1};\ldots;y_n,v_n),
\label{eq:direct-FI-V}
\end{align}
where \(G,H,G_0,H_0,G_i,H_i\) are the explicit expressions in
\eqref{eq:G-component}, \eqref{eq:H-component}, and
\eqref{eq:G0}--\eqref{eq:Hi}.
\end{theorem}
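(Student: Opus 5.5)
The plan is a direct unpacking of definitions: the criterion is essentially the defining Filippov identity \eqref{eq:FI} written in components. By \cref{prop:direct-bracket}, the operation $[-,\ldots,-]_{\Omega}$ is already alternating and multilinear on $\g\oplus V$. Hence it is an $n$-Lie bracket if and only if \eqref{eq:FI} holds for all $z_1,\ldots,z_{n-1},w_1,\ldots,w_n\in\g\oplus V$.

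Every element of $\g\oplus V$ can be written uniquely as $x+u$ with $x\in\g$ and $u\in V$. So it suffices, and is necessary, to test \eqref{eq:FI} on $z_a=x_a+u_a$ and $w_i=y_i+v_i$ with all $x_a,y_i\in\g$ and $u_a,v_i\in V$ arbitrary. No reduction to homogeneous elements is needed, since both sides are taken at fully general arguments.

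The computation then proceeds in four steps.
\begin{enumerate}[label=\textup{(\arabic*)},leftmargin=2.5em]
\item By \eqref{eq:action-direct-expansion}, the inner bracket $[w_1,\ldots,w_n]_{\Omega}$ equals $G_0+H_0$, with $G_0\in\g$ and $H_0\in V$ as displayed in \eqref{eq:G0}--\eqref{eq:H0}. This is just \eqref{eq:G-component}--\eqref{eq:H-component} with $(x_i,u_i)$ replaced by $(y_i,v_i)$.
\item Likewise $[z_1,\ldots,z_{n-1},w_i]_{\Omega}=G_i+H_i$. Here one checks \eqref{eq:Gi}--\eqref{eq:Hi} by substituting the $n$-th slot $(y_i,v_i)$ into $G$ and $H$. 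The $i=n$ terms of the sums in \eqref{eq:G-component}--\eqref{eq:H-component} carry sign $(-1)^{n-1}$ for $\vartheta_V(v_i)$ and $(-1)^0=1$ for $\rho_V$, $\rho_{\g}$ and $\vartheta_{\g}$. These match the displayed formulas.
\item Apply \eqref{eq:action-direct-expansion} once more, now with last argument $G_0+H_0$. This writes the left side of \eqref{eq:FI} as $G(x_1,u_1;\ldots;x_{n-1},u_{n-1};G_0,H_0)+H(\ldots)$, with the two summands lying in $\g$ and $V$ respectively.
\item For each $i$, the $i$-th summand on the right side of \eqref{eq:FI} is the bracket with $(G_i,H_i)$ in position $i$. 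Its $\g$- and $V$-components are $G(y_1,v_1;\ldots;G_i,H_i;\ldots;y_n,v_n)$ and the corresponding $H$.
\end{enumerate}

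Since $\g\oplus V$ is a direct sum, the equality of the two sides of \eqref{eq:FI} is equivalent to equality of the $\g$-components and of the $V$-components separately. These are exactly \eqref{eq:direct-FI-g} and \eqref{eq:direct-FI-V}. Both directions of the equivalence follow at once, because every step above is a reversible rewriting of the same expressions.

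The only place needing care is the sign bookkeeping in step (2) when $n\geq 3$, where the distinguished slot must be placed correctly. In particular one must confirm that \cref{prop:direct-bracket} is used with consistent positional signs, both when the inserted pair $(G_i,H_i)$ sits in an interior position $i$ and when $(G_0,H_0)$ sits in the last position. I would handle this by always applying the general formula $G,H$ verbatim with the pair inserted in place, rather than moving it, so that no extra reordering signs ever appear.
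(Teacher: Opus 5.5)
Your proposal is correct and matches the paper's proof essentially step for step. Like the paper, you test \eqref{eq:FI} on fully general elements $z_a=x_a+u_a$, $w_i=y_i+v_i$, identify the inner brackets as $G_0+H_0$ and $G_i+H_i$, re-expand the outer brackets with the pair inserted in place, and read off the $\g$- and $V$-components via the direct sum; your explicit check of the position-$n$ signs in \eqref{eq:Gi}--\eqref{eq:Hi} is a small verification the paper leaves implicit.
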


\begin{proof}
With \(z_a=x_a+u_a\) and \(w_i=y_i+v_i\), equations
\eqref{eq:G0}--\eqref{eq:H0} first give
\begin{align*}
&[z_1,\ldots,z_{n-1},[w_1,\ldots,w_n]_{\Omega}]_{\Omega}\\
&=[x_1+u_1,\ldots,x_{n-1}+u_{n-1},G_0+H_0]_{\Omega}\\
&=G(x_1,u_1;\ldots;x_{n-1},u_{n-1};G_0,H_0)\\
&\quad+H(x_1,u_1;\ldots;x_{n-1},u_{n-1};G_0,H_0).
\end{align*}
The \(\g\)-component in the last equality is
\begin{align*}
&[x_1,\ldots,x_{n-1},G_0]_{\g}
 +\omega(u_1,\ldots,u_{n-1},H_0)\\
&\quad+\sum_{a=1}^{n-1}(-1)^{a-1}
 \vartheta_V(u_a)
 (x_1,\ldots,\widehat{x_a},\ldots,x_{n-1},G_0)\\
&\quad+(-1)^{n-1}\vartheta_V(H_0)(x_1,\ldots,x_{n-1})\\
&\quad+\sum_{a=1}^{n-1}(-1)^{n-a}
 \rho_V(u_1,\ldots,\widehat{u_a},\ldots,u_{n-1},H_0)x_a
 +\rho_V(u_1,\ldots,u_{n-1})G_0,
\end{align*}
and its \(V\)-component is
\begin{align*}
&\{u_1,\ldots,u_{n-1},H_0\}_V\\
&\quad+\sum_{a=1}^{n-1}(-1)^{n-a}
 \rho_{\g}(x_1,\ldots,\widehat{x_a},\ldots,x_{n-1},G_0)u_a
 +\rho_{\g}(x_1,\ldots,x_{n-1})H_0\\
&\quad+\sum_{a=1}^{n-1}(-1)^{n-a}
 \vartheta_{\g}(x_a)
 (u_1,\ldots,\widehat{u_a},\ldots,u_{n-1},H_0)
 +\vartheta_{\g}(G_0)(u_1,\ldots,u_{n-1}).
\end{align*}

For \(1\leq i\leq n\), equations \eqref{eq:Gi}--\eqref{eq:Hi}
give
\begin{align*}
&[w_1,\ldots,w_{i-1},
 [z_1,\ldots,z_{n-1},w_i]_{\Omega},
 w_{i+1},\ldots,w_n]_{\Omega}\\
&=[y_1+v_1,\ldots,y_{i-1}+v_{i-1},G_i+H_i,
 y_{i+1}+v_{i+1},\ldots,y_n+v_n]_{\Omega}\\
&=G(y_1,v_1;\ldots;y_{i-1},v_{i-1};G_i,H_i;
 y_{i+1},v_{i+1};\ldots;y_n,v_n)\\
&\quad+H(y_1,v_1;\ldots;y_{i-1},v_{i-1};G_i,H_i;
 y_{i+1},v_{i+1};\ldots;y_n,v_n).
\end{align*}
Therefore the \(\g\)-component of the Filippov difference in
\eqref{eq:FI} is
\begin{align*}
&G(x_1,u_1;\ldots;x_{n-1},u_{n-1};G_0,H_0)\\
&\quad-\sum_{i=1}^{n}
G(y_1,v_1;\ldots;y_{i-1},v_{i-1};G_i,H_i;
  y_{i+1},v_{i+1};\ldots;y_n,v_n),
\end{align*}
while its \(V\)-component is
\begin{align*}
&H(x_1,u_1;\ldots;x_{n-1},u_{n-1};G_0,H_0)\\
&\quad-\sum_{i=1}^{n}
H(y_1,v_1;\ldots;y_{i-1},v_{i-1};G_i,H_i;
  y_{i+1},v_{i+1};\ldots;y_n,v_n).
\end{align*}
The direct-sum decomposition shows that the difference vanishes exactly
when both displayed components vanish.  These are
\eqref{eq:direct-FI-g} and \eqref{eq:direct-FI-V}.
\end{proof}

\begin{remark}\label{rem:no-hidden-conditions}
The two equations in \Cref{thm:reduced-unified-product} are the two
components obtained after substituting arbitrary elements directly into
the defining Filippov identity \eqref{eq:FI}.  Setting selected
\(x_a,y_i,u_a,v_i\) equal to zero extracts every homogeneous
compatibility equation, including those forcing the omitted middle
mixed brackets to remain zero.
\end{remark}

\begin{example}[Six-parameter unified product]
\label{ex:examples-six-map}
Let
\[
 \g=\operatorname{span}\{x_1,\ldots,x_n,z\},
 \qquad
 V=\operatorname{span}\{u_0,u_1,\ldots,u_n\}.
\]
The only nonzero bracket of \(\g\) is
\begin{equation}
 [x_1,\ldots,x_{n-1},x_n]_{\g}=x_n.
\label{eq:examples-g-standard}
\end{equation}
Put
\[
 X_0=(x_1,\ldots,x_{n-1}),\qquad
 U_0=(u_1,\ldots,u_{n-1}).
\]
For arbitrary \(a,b,c,d,e,f\in K \), define
\begin{align}
 \rho_{\g}(X_0)u_0&=b u_0,
&
 \vartheta_V(u_0)(X_0)&=(-1)^{n-1}a x_n,
\label{eq:examples-six-one-V}\\
 \rho_V(U_0)z&=c z,
&
 \vartheta_{\g}(z)(U_0)&=d u_n,
\label{eq:examples-six-one-g}\\
 \omega(u_1,\ldots,u_n)&=e z,
&
 \{u_1,\ldots,u_n\}_V&=f u_n.
\label{eq:examples-six-pure-V}
\end{align}
All unlisted values are zero.  Then the bracket
\eqref{eq:action-direct-expansion} makes
\(E_{a,b,c,d,e,f}=\g\natprod V\) an \(n\)-Lie algebra.  Its complete
list of nonzero basis brackets is
\begin{align}
 [x_1,\ldots,x_{n-1},x_n]&=x_n,
\label{eq:examples-six-final-1}\\
 [x_1,\ldots,x_{n-1},u_0]&=a x_n+b u_0,
\label{eq:examples-six-final-2}\\
 [u_1,\ldots,u_{n-1},z]&=c z+d u_n,
\label{eq:examples-six-final-3}\\
 [u_1,\ldots,u_{n-1},u_n]&=e z+f u_n.
\label{eq:examples-six-final-4}
\end{align}
In particular, all six maps are nonzero if \(abcdef\neq0\).
\end{example}

\section{Realization and equivalence of action data}
\label{sec:classification}

Let \(E\) be a vector space containing \(\g\), and choose a complement
\(V\), so that \(E=\g\oplus V\).  Write
\(\pr_{\g}:E\to\g\) and \(\pr_V:E\to V\) for the associated
projections.

\begin{definition}\label{def:extreme-structure}
An \(n\)-Lie bracket on \(E\) containing \(\g\) as a subalgebra is
called \emph{extreme with respect to \(V\)} if
\begin{equation}
 [u_1,\ldots,u_k,x_{k+1},\ldots,x_n]_E=0
 \quad(2\leq k\leq n-2).
\label{eq:extreme-sparsity}
\end{equation}
For \(n=3\), this condition is empty.
\end{definition}

\begin{theorem}[Realization theorem]\label{thm:realization}
Every extreme extending structure on \(E=\g\oplus V\) is an
action-form unified product.  More precisely, it determines the datum
\begin{align}
\vartheta_V(u)(x_2,\ldots,x_n)
 &=\pr_{\g}[u,x_2,\ldots,x_n]_E,
\label{eq:extract-thetaV}\\
\rho_{\g}(x_1,\ldots,x_{n-1})u
 &=\pr_V[x_1,\ldots,x_{n-1},u]_E,
\label{eq:extract-rhog}\\
\rho_V(u_1,\ldots,u_{n-1})x
 &=\pr_{\g}[u_1,\ldots,u_{n-1},x]_E,
\label{eq:extract-rhoV}\\
\vartheta_{\g}(x)(u_1,\ldots,u_{n-1})
 &=\pr_V[u_1,\ldots,u_{n-1},x]_E,
\label{eq:extract-thetag}\\
\omega(u_1,\ldots,u_n)
 &=\pr_{\g}[u_1,\ldots,u_n]_E,
\label{eq:extract-omega}\\
\{u_1,\ldots,u_n\}_V
 &=\pr_V[u_1,\ldots,u_n]_E.
\label{eq:extract-Vbracket}
\end{align}
The identity map of \(\g\oplus V\) identifies the given bracket with
\eqref{eq:action-direct-expansion}.  Conversely, every datum satisfying
\eqref{eq:direct-FI-g}--\eqref{eq:direct-FI-V} defines such an extending
structure.
\end{theorem}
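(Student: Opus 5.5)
The plan has two directions. For the forward direction, I start from an extreme extending structure \([-,\ldots,-]_E\) on \(E=\g\oplus V\) and define the six maps by \eqref{eq:extract-thetaV}--\eqref{eq:extract-Vbracket}.

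First I check that each map has the type required by \Cref{def:action-datum}. Each is linear because the bracket and the projections are linear. Alternation in the \(\g\)-slots, respectively \(V\)-slots, is inherited from the alternation of \([-,\ldots,-]_E\). For example, \(\rho_{\g}(x_1,\ldots,x_{n-1})\) is alternating in the \(x\)'s, and \(\vartheta_V(u)\) is alternating in \(x_2,\ldots,x_n\).

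Next I compare the bracket \([-,\ldots,-]_E\) with \([-,\ldots,-]_\Omega\) on homogeneous arguments.
\begin{itemize}
\item With all entries in \(\g\), the two agree because \(\g\) is a subalgebra, so \([x_1,\ldots,x_n]_E=[x_1,\ldots,x_n]_{\g}\).
\item With exactly one \(V\)-entry, placed first, I write \([u,x_2,\ldots,x_n]_E\) as its \(\g\)- and \(V\)-components. Moving \(u\) to the last slot costs the sign \((-1)^{n-1}\). This turns the \(V\)-component into \((-1)^{n-1}\rho_{\g}(x_2,\ldots,x_n)u\), which matches \eqref{eq:def-rho-actions}.
\item With exactly one \(\g\)-entry, placed last, the bracket reproduces \eqref{eq:def-vartheta-actions} directly.
\item With all entries in \(V\), the bracket gives \eqref{eq:def-pure-V}.
\item With between two and \(n-2\) entries from \(V\), the bracket vanishes by the extremality condition \eqref{eq:extreme-sparsity}.
\end{itemize}
By the uniqueness part of \Cref{prop:direct-bracket}, two alternating multilinear operations that agree on these homogeneous values agree everywhere. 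Hence \([-,\ldots,-]_E=[-,\ldots,-]_\Omega\) under the identity map.

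Since \([-,\ldots,-]_E\) is an \(n\)-Lie bracket, \Cref{thm:reduced-unified-product} then shows that \(\Omega\) satisfies \eqref{eq:direct-FI-g}--\eqref{eq:direct-FI-V}.

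For the converse, suppose a datum satisfies \eqref{eq:direct-FI-g}--\eqref{eq:direct-FI-V}. By \Cref{thm:reduced-unified-product}, the operation \([-,\ldots,-]_\Omega\) is an \(n\)-Lie bracket. Setting all \(u_i=0\) in \eqref{eq:action-direct-expansion} gives \([x_1,\ldots,x_n]_{\g}\), so \(\g\) is a subalgebra. The brackets with between two and \(n-2\) entries of \(V\) vanish by construction, so the structure is extreme. Finally, applying the extraction formulas to \([-,\ldots,-]_\Omega\) returns the original six maps, because of the homogeneous values \eqref{eq:def-rho-actions}--\eqref{eq:def-pure-V}. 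So the two constructions are mutually inverse.

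The only delicate step is the sign bookkeeping for the one-\(V\)-entry brackets. I have to confirm that the convention placing \(u\) first for \(\vartheta_V\) and last for \(\rho_{\g}\) is consistent with \eqref{eq:extract-rhog}, where \(u\) sits last. To do this I would write \([x_1,\ldots,x_{n-1},u]_E=(-1)^{n-1}[u,x_1,\ldots,x_{n-1}]_E\) and check that both projections reproduce \eqref{eq:def-rho-actions} exactly. The remaining steps are routine.
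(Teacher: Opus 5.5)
Your proposal is correct and is essentially the paper's argument. The paper expands $[x_1+u_1,\ldots,x_n+u_n]_E$ by multilinearity, discards the middle degrees by \eqref{eq:extreme-sparsity}, and projects onto $\g$ and $V$ to obtain exactly $G$ and $H$; your homogeneous comparison combined with the uniqueness in \Cref{prop:direct-bracket} is the same step, and both directions are then read off from \Cref{thm:reduced-unified-product}. Your converse is slightly more explicit than the paper's, since you check that $\g$ is a subalgebra, that the middle degrees vanish, and that extraction returns $\Omega$ (like the paper, you tacitly need $n\geq 3$: for $n=2$ the one-$V$-entry and one-$\g$-entry types coincide, so the prescriptions \eqref{eq:def-rho-actions} and \eqref{eq:def-vartheta-actions} overlap).
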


\begin{proof}
Let \(x_1,\ldots,x_n\in\g\) and \(u_1,\ldots,u_n\in V\).
Multilinearity in \(E=\g\oplus V\) gives
\begin{align*}
&[x_1+u_1,\ldots,x_n+u_n]_E\\
&=[x_1,\ldots,x_n]_{\g}
 +\sum_{i=1}^{n}(-1)^{i-1}[u_i,
 x_1,\ldots,\widehat{x_i},\ldots,x_n]_E\\
&\quad+\sum_{k=2}^{n-2}
 \sum_{1\leq i_1<\cdots<i_k\leq n}
 (-1)^{i_1+\cdots+i_k-k(k+1)/2}\\
&\qquad\qquad\cdot
 [u_{i_1},\ldots,u_{i_k},
 x_{j_1},\ldots,x_{j_{n-k}}]_E\\
&\quad+\sum_{i=1}^{n}(-1)^{n-i}
 [u_1,\ldots,\widehat{u_i},\ldots,u_n,x_i]_E
 +[u_1,\ldots,u_n]_E,
\end{align*}
where \(j_1<\cdots<j_{n-k}\) are the complementary indices.  By
\eqref{eq:extreme-sparsity}, every summand in the middle sum is zero.
Taking the two projections and using
\eqref{eq:extract-thetaV}--\eqref{eq:extract-Vbracket} gives
\begin{align*}
\operatorname{pr}_{\g}[x_1+u_1,\ldots,x_n+u_n]_E
&=[x_1,\ldots,x_n]_{\g}
 +\omega(u_1,\ldots,u_n)\\
&\quad+\sum_{i=1}^{n}(-1)^{i-1}\vartheta_V(u_i)
 (x_1,\ldots,\widehat{x_i},\ldots,x_n)\\
&\quad+\sum_{i=1}^{n}(-1)^{n-i}\rho_V
 (u_1,\ldots,\widehat{u_i},\ldots,u_n)x_i=G,\\
\operatorname{pr}_{V}[x_1+u_1,\ldots,x_n+u_n]_E
&=\{u_1,\ldots,u_n\}_V\\
&\quad+\sum_{i=1}^{n}(-1)^{n-i}\rho_{\g}
 (x_1,\ldots,\widehat{x_i},\ldots,x_n)u_i\\
&\quad+\sum_{i=1}^{n}(-1)^{n-i}\vartheta_{\g}(x_i)
 (u_1,\ldots,\widehat{u_i},\ldots,u_n)=H.
\end{align*}
Thus the transported bracket agrees with
\eqref{eq:action-direct-expansion} on every
\(x_1+u_1,\ldots,x_n+u_n\).  Applying \eqref{eq:FI} to arbitrary
\(z_1,\ldots,z_{n-1},w_1,\ldots,w_n\in E\) and invoking
\Cref{thm:reduced-unified-product} shows that
\([-,\ldots,-]_E=[-,\ldots,-]_{\Omega}\) satisfies the Filippov
identity precisely when both \eqref{eq:direct-FI-g} and
\eqref{eq:direct-FI-V} hold.
\end{proof}

We next give the equivalence formulas directly in action notation.  Let
\(\Omega\) and \(\Omega'\) be two action data satisfying
\Cref{thm:reduced-unified-product}; put a prime on every map belonging to
\(\Omega'\).

\begin{theorem}[Homomorphisms fixing \(\g\)]
\label{thm:action-morphisms}
A linear map
\[
 \psi:\g\natprod_{\Omega}V\longrightarrow
       \g\natprod_{\Omega'}V
\]
fixes \(\g\) pointwise if and only if there are linear maps
\(r:V\to\g\) and \(s:V\to V\) such that
\begin{equation}
 \psi(x+u)=x+r(u)+s(u)
\label{eq:psi-rs}
\end{equation}
and the identities below hold.

For \(u\in V\) and \(x_2,\ldots,x_n\in\g\),
\begin{align}
&\vartheta_V(u)(x_2,\ldots,x_n)
 +(-1)^{n-1}r\bigl(\rho_{\g}(x_2,\ldots,x_n)u\bigr)
\notag\\
&\qquad=[r(u),x_2,\ldots,x_n]_{\g}
 +\vartheta'_V(s(u))(x_2,\ldots,x_n),
\label{eq:morphism-one-gpart}\\
&s\bigl(\rho_{\g}(x_2,\ldots,x_n)u\bigr)
 =\rho'_{\g}(x_2,\ldots,x_n)s(u).
\label{eq:morphism-one-Vpart}
\end{align}

For \(2\leq k\leq n-2\), \(u_1,\ldots,u_k\in V\), and
\(x_{k+1},\ldots,x_n\in\g\),
\begin{align}
0={}&[r(u_1),\ldots,r(u_k),x_{k+1},\ldots,x_n]_{\g}
\notag\\
&+\sum_{i=1}^{k}(-1)^{i-1}\vartheta'_V(s(u_i))
 (r(u_1),\ldots,\widehat{r(u_i)},\ldots,r(u_k),
  x_{k+1},\ldots,x_n),
\label{eq:morphism-middle-g}\\
0={}&\sum_{i=1}^{k}(-1)^{n-i}\rho'_{\g}
 (r(u_1),\ldots,\widehat{r(u_i)},\ldots,r(u_k),
  x_{k+1},\ldots,x_n)s(u_i).
\label{eq:morphism-middle-V}
\end{align}

For \(u_1,\ldots,u_{n-1}\in V\) and \(x\in\g\),
\begin{align}
&\rho_V(u_1,\ldots,u_{n-1})x
 +r\bigl(\vartheta_{\g}(x)(u_1,\ldots,u_{n-1})\bigr)
\notag\\
&=[r(u_1),\ldots,r(u_{n-1}),x]_{\g}
\notag\\
&\quad+\sum_{i=1}^{n-1}(-1)^{i-1}\vartheta'_V(s(u_i))
 (r(u_1),\ldots,\widehat{r(u_i)},\ldots,r(u_{n-1}),x)
\notag\\
&\quad+\rho'_V(s(u_1),\ldots,s(u_{n-1}))x,
\label{eq:morphism-nminusone-g}\\
&s\bigl(\vartheta_{\g}(x)(u_1,\ldots,u_{n-1})\bigr)
\notag\\
&=\sum_{i=1}^{n-1}(-1)^{n-i}\rho'_{\g}
 (r(u_1),\ldots,\widehat{r(u_i)},\ldots,r(u_{n-1}),x)s(u_i)
\notag\\
&\quad+\vartheta'_{\g}(x)
 (s(u_1),\ldots,s(u_{n-1})).
\label{eq:morphism-nminusone-V}
\end{align}

Finally, for \(u_1,\ldots,u_n\in V\),
\begin{align}
&\omega(u_1,\ldots,u_n)+r(\{u_1,\ldots,u_n\}_V)
\notag\\
&=[r(u_1),\ldots,r(u_n)]_{\g}
 +\omega'(s(u_1),\ldots,s(u_n))
\notag\\
&\quad+\sum_{i=1}^{n}(-1)^{i-1}\vartheta'_V(s(u_i))
 (r(u_1),\ldots,\widehat{r(u_i)},\ldots,r(u_n))
\notag\\
&\quad+\sum_{i=1}^{n}(-1)^{n-i}\rho'_V
 (s(u_1),\ldots,\widehat{s(u_i)},\ldots,s(u_n))r(u_i),
\label{eq:morphism-pure-g}\\
&s(\{u_1,\ldots,u_n\}_V)
 =\{s(u_1),\ldots,s(u_n)\}'_V
\notag\\
&\quad+\sum_{i=1}^{n}(-1)^{n-i}\rho'_{\g}
 (r(u_1),\ldots,\widehat{r(u_i)},\ldots,r(u_n))s(u_i)
\notag\\
&\quad+\sum_{i=1}^{n}(-1)^{n-i}\vartheta'_{\g}(r(u_i))
 (s(u_1),\ldots,\widehat{s(u_i)},\ldots,s(u_n)).
\label{eq:morphism-pure-V}
\end{align}
Moreover, \(\psi\) is an isomorphism if and only if \(s\) is
invertible.
\end{theorem}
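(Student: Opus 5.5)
First, I would settle the shape of \(\psi\). A linear map fixes \(\g\) pointwise exactly when \(\psi(x)=x\) for every \(x\in\g\). So \(\psi\) is determined by its restriction to \(V\). Decomposing \(\psi(u)\) along \(\g\oplus V\) gives \(r=\pr_{\g}\circ\psi|_V\) and \(s=\pr_V\circ\psi|_V\), which is \eqref{eq:psi-rs}; conversely every pair \((r,s)\) defines such a linear map. Reading the statement as characterizing when such a \(\psi\) is an \(n\)-Lie homomorphism, the task is to show that the homomorphism condition \(\psi[w_1,\ldots,w_n]_{\Omega}=[\psi w_1,\ldots,\psi w_n]_{\Omega'}\) is equivalent to the displayed identities.

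Both sides are multilinear and alternating in the \(w_i\), so it suffices to test homogeneous inputs with \(k\) entries from \(V\), placed first, and \(n-k\) from \(\g\), for \(0\le k\le n\).
\begin{itemize}
\item For \(k=0\) both sides equal \([x_1,\ldots,x_n]_{\g}\), since \(\g\) is a subalgebra of both products and \(\psi\) fixes \(\g\).
\item For \(k=1\), I would compute the left side \(\psi([u,x_2,\ldots,x_n]_\Omega)\) from \eqref{eq:def-rho-actions} and \eqref{eq:psi-rs}, giving \(\vartheta_V(u)(\cdots)+(-1)^{n-1}r(\rho_{\g}(\cdots)u)+(-1)^{n-1}s(\rho_{\g}(\cdots)u)\). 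The right side \([r(u)+s(u),x_2,\ldots,x_n]_{\Omega'}\) is \([r(u),x_2,\ldots]_{\g}+\vartheta'_V(s(u))(\cdots)+(-1)^{n-1}\rho'_{\g}(\cdots)s(u)\). Comparing \(\g\)- and \(V\)-components yields \eqref{eq:morphism-one-gpart} and \eqref{eq:morphism-one-Vpart}, the latter after cancelling the common sign.
\item For \(2\le k\le n-2\), the left side is \(0\) because intermediate brackets vanish in \(\Omega\). For the right side, I would apply \Cref{prop:direct-bracket} to \([r(u_1)+s(u_1),\ldots,r(u_k)+s(u_k),x_{k+1},\ldots,x_n]_{\Omega'}\), i.e.\ evaluate \(G\) and \(H\) with \(V\)-entries \(s(u_1),\ldots,s(u_k),0,\ldots,0\). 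Every term with at least two nonzero \(V\)-entries and at least two zero entries drops out, because \(k\le n-2\) forces \(\omega'\), \(\{\,\}'_V\), \(\rho'_V\) and \(\vartheta'_{\g}\) to receive a zero argument. What remains is the pure \(\g\)-bracket plus the single-\(V\)-entry terms \(\vartheta'_V(s(u_i))\) and \(\rho'_{\g}(\cdots)s(u_i)\) with signs \((-1)^{i-1}\) and \((-1)^{n-i}\). This gives \eqref{eq:morphism-middle-g} and \eqref{eq:morphism-middle-V}.
\item For \(k=n-1\) and \(k=n\), the same evaluation of \(G,H\) keeps additionally the \(\rho'_V\) and \(\vartheta'_{\g}\) terms, respectively the \(\omega'\) and \(\{\,\}'_V\) terms. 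The left side, computed with \eqref{eq:def-vartheta-actions} and \eqref{eq:def-pure-V} and then \(\psi\), contributes the terms with \(r\) and \(s\) applied to the \(V\)-components. Splitting into components gives the four final identities. In the case \(k=n-1\), \(\vartheta'_{\g}\) only sees the \(n-1\) arguments \(s(u_i)\), since its slots not involving \(x\) would need a zero entry. In the case \(k=n\), no \(x\) appears, so the \(\rho'_{\g}\) and \(\vartheta'_{\g}\) terms carry \(r(u_i)\).
\end{itemize}

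The main obstacle is sign bookkeeping. I would write the inputs in the order prescribed by \Cref{prop:direct-bracket}, namely \(x_i\mapsto r(u_i)\) or the actual \(x\), and \(u_i\mapsto s(u_i)\) or \(0\), and read off the signs \((-1)^{i-1}\) and \((-1)^{n-i}\) directly rather than re-permuting entries. The same care is needed to confirm the vanishing claims above.

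For the bijectivity claim, I would use that with respect to \(\g\oplus V\) the matrix of \(\psi\) is block upper triangular:
\[
\psi=\begin{pmatrix}\id & r\\ 0 & s\end{pmatrix}.
\]
If \(s\) is invertible, the inverse is \(x+v\mapsto x-r s^{-1}(v)+s^{-1}(v)\). Conversely, if \(\psi\) is bijective and \(s(u)=0\), then \(\psi(u-r(u))=r(u)+s(u)-r(u)=0\), so \(u=r(u)\in\g\cap V\), hence \(u=0\) and \(s\) is injective. If \(v\in V\) and \(\psi(x+u)=v\), then \(s(u)=v\), so \(s\) is surjective. Finally, an inverse of a bijective homomorphism is automatically a homomorphism, so "isomorphism" reduces to bijectivity.
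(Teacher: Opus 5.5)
Your proposal is correct and follows essentially the same route as the paper: you write \(\psi(x+u)=x+r(u)+s(u)\), check bracket preservation on homogeneous tuples (\(V\)-entries first) for each number of \(V\)-entries, compare \(\g\)- and \(V\)-components, and use the block-triangular matrix for invertibility. The differences are cosmetic. You obtain the target brackets by evaluating \(G,H\) from \Cref{prop:direct-bracket} with zero \(V\)-entries, which correctly kills the \(\omega',\{-,\ldots,-\}'_V,\rho'_V,\vartheta'_{\g}\) terms in the middle degrees. You also prove that \(s\) is invertible by a direct injectivity and surjectivity check, instead of passing to the induced map on \(E/\g\).
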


\begin{proof}
Because \(\psi\) fixes each \(x\in\g\), its restriction to \(V\)
has unique \(\g\)- and \(V\)-components.  Thus
\[
\psi|_{\g}=\id_{\g},\qquad
r=\operatorname{pr}_{\g}\psi|_V,\qquad
s=\operatorname{pr}_V\psi|_V,
\qquad
\psi(x+u)=x+r(u)+s(u).
\]
By multilinearity, bracket preservation can be checked on homogeneous
tuples.  First take \(u\in V\) and \(x_2,\ldots,x_n\in\g\).  The
source and target calculations are
\begin{align*}
\psi([u,x_2,\ldots,x_n]_{\Omega})
&=\vartheta_V(u)(x_2,\ldots,x_n)
 +(-1)^{n-1}r(\rho_{\g}(x_2,\ldots,x_n)u)\\
&\quad+(-1)^{n-1}s(\rho_{\g}(x_2,\ldots,x_n)u),\\
[r(u)+s(u),x_2,\ldots,x_n]_{\Omega'}
&=[r(u),x_2,\ldots,x_n]_{\g}
 +\vartheta'_V(s(u))(x_2,\ldots,x_n)\\
&\quad+(-1)^{n-1}\rho'_{\g}(x_2,\ldots,x_n)s(u),
\end{align*}
The equality
\[
\psi([u,x_2,\ldots,x_n]_{\Omega})
=[\psi(u),x_2,\ldots,x_n]_{\Omega'}
\]
holds exactly when its \(\g\)-components and \(V\)-components agree.
These two component comparisons give
\eqref{eq:morphism-one-gpart} and \eqref{eq:morphism-one-Vpart},
respectively.
Next let \(2\leq k\leq n-2\), take \(u_1,\ldots,u_k\in V\) and
\(x_{k+1},\ldots,x_n\in\g\).  The source bracket is zero, whereas
the target bracket is
\begin{align*}
&\psi([u_1,\ldots,u_k,x_{k+1},\ldots,x_n]_{\Omega})=0,
 \qquad 2\leq k\leq n-2,\\
&[r(u_1)+s(u_1),\ldots,r(u_k)+s(u_k),
 x_{k+1},\ldots,x_n]_{\Omega'}\\
&=[r(u_1),\ldots,r(u_k),x_{k+1},\ldots,x_n]_{\g}\\
&\quad+\sum_{i=1}^{k}(-1)^{i-1}\vartheta'_V(s(u_i))
 (r(u_1),\ldots,\widehat{r(u_i)},\ldots,r(u_k),
 x_{k+1},\ldots,x_n)\\
&\quad+\sum_{i=1}^{k}(-1)^{n-i}\rho'_{\g}
 (r(u_1),\ldots,\widehat{r(u_i)},\ldots,r(u_k),
 x_{k+1},\ldots,x_n)s(u_i),
\end{align*}
whose \(\g\)- and \(V\)-components vanish exactly when
\begin{align*}
&\psi([u_1,\ldots,u_k,x_{k+1},\ldots,x_n]_{\Omega})\\
&\quad=[\psi(u_1),\ldots,\psi(u_k),
        x_{k+1},\ldots,x_n]_{\Omega'}.
\end{align*}
Comparing its two direct-sum components gives
\eqref{eq:morphism-middle-g} and \eqref{eq:morphism-middle-V}.
For \(u_1,\ldots,u_{n-1}\in V\) and \(x\in\g\), direct expansion
gives
\begin{align*}
&\psi([u_1,\ldots,u_{n-1},x]_{\Omega})\\
&=\rho_V(u_1,\ldots,u_{n-1})x
 +r(\vartheta_{\g}(x)(u_1,\ldots,u_{n-1}))
 +s(\vartheta_{\g}(x)(u_1,\ldots,u_{n-1})),\\
&[r(u_1)+s(u_1),\ldots,r(u_{n-1})+s(u_{n-1}),x]_{\Omega'}\\
&=[r(u_1),\ldots,r(u_{n-1}),x]_{\g}\\
&\quad+\sum_{i=1}^{n-1}(-1)^{i-1}\vartheta'_V(s(u_i))
 (r(u_1),\ldots,\widehat{r(u_i)},\ldots,r(u_{n-1}),x)\\
&\quad+\rho'_V(s(u_1),\ldots,s(u_{n-1}))x\\
&\quad+\sum_{i=1}^{n-1}(-1)^{n-i}\rho'_{\g}
 (r(u_1),\ldots,\widehat{r(u_i)},\ldots,r(u_{n-1}),x)s(u_i)\\
&\quad+\vartheta'_{\g}(x)(s(u_1),\ldots,s(u_{n-1})),
\end{align*}
Thus the equality
\[
\psi([u_1,\ldots,u_{n-1},x]_{\Omega})
=[\psi(u_1),\ldots,\psi(u_{n-1}),x]_{\Omega'}
\]
gives \eqref{eq:morphism-nminusone-g} after comparison in \(\g\), and
it gives \eqref{eq:morphism-nminusone-V} after comparison in \(V\).
Finally, for \(u_1,\ldots,u_n\in V\), the two pure \(V\)-input
expansions are
\begin{align*}
&\psi([u_1,\ldots,u_n]_{\Omega})\\
&=\omega(u_1,\ldots,u_n)+r(\{u_1,\ldots,u_n\}_V)
 +s(\{u_1,\ldots,u_n\}_V),\\
&[r(u_1)+s(u_1),\ldots,r(u_n)+s(u_n)]_{\Omega'}\\
&=[r(u_1),\ldots,r(u_n)]_{\g}
 +\omega'(s(u_1),\ldots,s(u_n))\\
&\quad+\sum_{i=1}^{n}(-1)^{i-1}\vartheta'_V(s(u_i))
 (r(u_1),\ldots,\widehat{r(u_i)},\ldots,r(u_n))\\
&\quad+\sum_{i=1}^{n}(-1)^{n-i}\rho'_V
 (s(u_1),\ldots,\widehat{s(u_i)},\ldots,s(u_n))r(u_i)\\
&\quad+\{s(u_1),\ldots,s(u_n)\}'_V\\
&\quad+\sum_{i=1}^{n}(-1)^{n-i}\rho'_{\g}
 (r(u_1),\ldots,\widehat{r(u_i)},\ldots,r(u_n))s(u_i)\\
&\quad+\sum_{i=1}^{n}(-1)^{n-i}\vartheta'_{\g}(r(u_i))
 (s(u_1),\ldots,\widehat{s(u_i)},\ldots,s(u_n)),
\end{align*}
The equality
\[
\psi([u_1,\ldots,u_n]_{\Omega})
=[\psi(u_1),\ldots,\psi(u_n)]_{\Omega'}
\]
therefore gives \eqref{eq:morphism-pure-g} in the \(\g\)-component
and \eqref{eq:morphism-pure-V} in the \(V\)-component.
The all-\(\g\) tuple is preserved because \(\psi|_{\g}=\id_{\g}\);
the four computations above exhaust all remaining mixed degrees.
Therefore \(\psi\) preserves the bracket if and only if
\eqref{eq:morphism-one-gpart}--\eqref{eq:morphism-pure-V} hold.

With respect to \(\g\oplus V\), the matrix of \(\psi\) is
\[
[\psi]_{\g\oplus V}
=\begin{pmatrix}\id_{\g}&r\\0&s\end{pmatrix}.
\]
If \(s\in\GL(V)\), direct multiplication gives
\[
[\psi]^{-1}
=\begin{pmatrix}\id_{\g}&-rs^{-1}\\0&s^{-1}\end{pmatrix}.
\]
Conversely, if \(\psi\) is invertible, the induced linear map on the
quotient \((\g\oplus V)/\g\cong V\) is \(s\), so \(s\) is invertible.
\end{proof}

\begin{definition}\label{def:equivalent-action-data}
Two compatible action data are called \emph{equivalent} if
\eqref{eq:morphism-one-gpart}--\eqref{eq:morphism-pure-V} hold with
\(s=\id_V\).  They are called \emph{isomorphic} if those identities
hold for some \(r\) and some \(s\in\GL(V)\).
\end{definition}

\begin{corollary}[Classification]\label{cor:classification}
Equivalence classes of compatible action data classify extreme
extending structures on \(E\) up to \(n\)-Lie algebra isomorphisms which
fix \(\g\) and induce the identity on \(E/\g\).  Isomorphism classes of
compatible action data classify them up to arbitrary isomorphisms which
fix \(\g\) pointwise.
\end{corollary}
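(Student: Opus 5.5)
The corollary follows by combining \Cref{thm:realization} with \Cref{thm:action-morphisms}, so the plan is to set up the correspondence on objects and then match the morphisms.

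\textbf{Step 1: the correspondence on objects.} By \Cref{thm:realization}, every extreme extending structure on \(E=\g\oplus V\) determines a compatible action datum \(\Omega\) through the projection formulas \eqref{eq:extract-thetaV}--\eqref{eq:extract-Vbracket}, and the identity map identifies \([-,\ldots,-]_E\) with \([-,\ldots,-]_{\Omega}\). Conversely, each datum satisfying \eqref{eq:direct-FI-g}--\eqref{eq:direct-FI-V} gives such a structure; it is extreme because the middle brackets are declared zero. The two constructions are mutually inverse. This holds because a bracket with the sparsity \eqref{eq:extreme-sparsity} is determined by its homogeneous values, and the six extraction formulas read off exactly the components in \eqref{eq:def-rho-actions}--\eqref{eq:def-pure-V}. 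So there is a bijection between extreme extending structures on \(E\) and compatible action data.

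\textbf{Step 2: matching the morphisms.} Take two extreme structures \(E_\Omega\) and \(E_{\Omega'}\) and an \(n\)-Lie isomorphism \(\psi\) between them fixing \(\g\) pointwise. By \Cref{thm:action-morphisms}, \(\psi(x+u)=x+r(u)+s(u)\) with \(s\in\GL(V)\), and \eqref{eq:morphism-one-gpart}--\eqref{eq:morphism-pure-V} hold. The induced map on \(E/\g\cong V\) is \(s\), read off from the block matrix at the end of that proof. Hence \(\psi\) induces the identity on \(E/\g\) exactly when \(s=\id_V\).

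Conversely, given \(r\) and \(s\) satisfying those identities, the same theorem shows that \(\psi\) is a bracket-preserving isomorphism fixing \(\g\). Therefore the two notions of \Cref{def:equivalent-action-data}, equivalence (\(s=\id_V\)) and isomorphism (arbitrary \(s\in\GL(V)\)), correspond exactly to the two kinds of isomorphism in the statement.

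\textbf{Step 3: equivalence relations.} It remains to note that both relations are genuine equivalence relations on data. This is immediate on the structure side: isomorphisms fixing \(\g\), whether or not they induce the identity on \(E/\g\), are closed under composition and inverses, and include the identity. The relations on data are transported from these along the bijection of Step 1, so they are equivalence relations as well.

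The only point needing care, which I expect to be the main (mild) obstacle, is the well-definedness of the bijection in Step 1. An extreme structure has vanishing middle mixed brackets, so the middle equations \eqref{eq:morphism-middle-g}--\eqref{eq:morphism-middle-V} are not extra conditions on the data. They are automatically part of what it means for \(\psi\) to preserve the brackets between two extreme structures.
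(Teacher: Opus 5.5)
Your proposal is correct and follows essentially the same route as the paper: \Cref{thm:realization} gives the bijection between extreme brackets on $E$ and compatible action data, and \Cref{thm:action-morphisms}, with $s$ identified as the induced map on $E/\g$, matches equivalence ($s=\id_V$) and isomorphism ($s\in\GL(V)$) of data with the two kinds of isomorphisms fixing $\g$. Your Step~3, checking that both relations are equivalence relations, is a small addition the paper leaves implicit; your closing remark belongs to Step~2 rather than Step~1, since the middle equations constrain the pair $(r,s)$ and not the object bijection, but this does not affect correctness.
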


\begin{proof}
Fix \(E=\g\oplus V\).  For arbitrary \(x_i\in\g\) and \(u_i\in V\),
\Cref{thm:realization} assigns to each extreme bracket the datum
\[
\Omega=(\rho_{\g},\rho_V,\vartheta_V,\vartheta_{\g},
\omega,\{- ,\ldots,-\}_V)
\]
through \eqref{eq:extract-thetaV}--\eqref{eq:extract-Vbracket}.
The direct calculation in \Cref{thm:reduced-unified-product} shows
that this datum satisfies both \eqref{eq:direct-FI-g} and
\eqref{eq:direct-FI-V}.  Conversely, these two identities reconstruct
an extreme bracket by \eqref{eq:action-direct-expansion}.  If
\(\psi(x+u)=x+r(u)+s(u)\), then its induced map on \(E/\g\cong V\)
is \(s\):
\[
\psi_{\Omega,\Omega'}(x+u)=x+r(u)+s(u),\qquad
\overline{\psi}_{\Omega,\Omega'}=s:E/\g\longrightarrow E/\g.
\]
Applying the elementwise identities of
\Cref{thm:action-morphisms} to
\(u,x_2,\ldots,x_n\), to
\(u_1,\ldots,u_k,x_{k+1},\ldots,x_n\), and to the two extreme
input types gives the following conclusions.  Taking \(s=\id_V\),
the identities \eqref{eq:morphism-one-gpart}--\eqref{eq:morphism-pure-V}
are exactly the bracket-preservation equations for an isomorphism
\(\psi\) which fixes \(\g\) and induces \(\id_{E/\g}\).  Allowing any
\(s\in\GL(V)\), the same identities are exactly the equations for an
isomorphism which fixes \(\g\) pointwise.  These are the two
classification relations in the statement.
\end{proof}

\section{Crossed products and sparse non-abelian extensions}
\label{sec:crossed}

Let both \(\g\) and \(V\) be \(n\)-Lie algebras.  In this section the
bracket of \(V\) is denoted by \([- ,\ldots,-]_V\).

\begin{definition}\label{def:crossed-system}
A \emph{sparse crossed system} consists of
\begin{align*}
 \rho_V&:\bigwedge^{n-1}V\longrightarrow
                  \operatorname{End}(\g),\\
 \vartheta_V&:V\longrightarrow\Hom(\bigwedge^{n-1}\g,\g),\\
 \omega&:\bigwedge^nV\longrightarrow\g.
\end{align*}
Its crossed-product operation on \(\g\oplus V\) is
\begin{align}
&[x_1+u_1,\ldots,x_n+u_n]_{\rho_V,\vartheta_V,\omega}
\notag\\
&=[x_1,\ldots,x_n]_{\g}+[u_1,\ldots,u_n]_V
 +\omega(u_1,\ldots,u_n)
\notag\\
&\quad+\sum_{i=1}^{n}(-1)^{i-1}\vartheta_V(u_i)
 (x_1,\ldots,\widehat{x_i},\ldots,x_n)
\notag\\
&\quad+\sum_{i=1}^{n}(-1)^{n-i}\rho_V
 (u_1,\ldots,\widehat{u_i},\ldots,u_n)x_i.
\label{eq:crossed-bracket}
\end{align}
If this is an \(n\)-Lie bracket, the resulting algebra is denoted by
\(\g\#_{\rho_V,\vartheta_V,\omega}V\).
\end{definition}

This is the specialization
\[
 \rho_{\g}=0,\qquad \vartheta_{\g}=0,
 \qquad \{u_1,\ldots,u_n\}_V=[u_1,\ldots,u_n]_V
\]
of the unified product.  The complete criterion can be written as one
direct Filippov calculation.  For this purpose, denote only the
\(\g\)-component of \eqref{eq:crossed-bracket} by
\begin{align}
\mathcal C(x_1,u_1;\ldots;x_n,u_n)
={}&[x_1,\ldots,x_n]_{\g}+\omega(u_1,\ldots,u_n)
\notag\\
&+\sum_{i=1}^{n}(-1)^{i-1}\vartheta_V(u_i)
 (x_1,\ldots,\widehat{x_i},\ldots,x_n)
\notag\\
&+\sum_{i=1}^{n}(-1)^{n-i}\rho_V
 (u_1,\ldots,\widehat{u_i},\ldots,u_n)x_i.
\label{eq:crossed-C}
\end{align}
For the defining Filippov identity, take
\[
 \begin{gathered}
 x_1,\ldots,x_{n-1},y_1,\ldots,y_n\in\g,\qquad
 u_1,\ldots,u_{n-1},v_1,\ldots,v_n\in V,\\
 z_a=x_a+u_a,\qquad w_i=y_i+v_i.
 \end{gathered}
\]
Set
\begin{align}
 \mathcal C_0&=\mathcal C(y_1,v_1;\ldots;y_n,v_n),
&
 \mathcal V_0&=[v_1,\ldots,v_n]_V,
\label{eq:crossed-C0}\\
 \mathcal C_i&=\mathcal C(x_1,u_1;\ldots;
 x_{n-1},u_{n-1};y_i,v_i),
&
 \mathcal V_i&=[u_1,\ldots,u_{n-1},v_i]_V.
\label{eq:crossed-Ci}
\end{align}
Thus
\begin{align}
 [w_1,\ldots,w_n]&=\mathcal C_0+\mathcal V_0,
&
 [z_1,\ldots,z_{n-1},w_i]&=\mathcal C_i+\mathcal V_i.
\label{eq:crossed-V-components}
\end{align}

\begin{theorem}[Direct crossed-product criterion]
\label{thm:crossed-direct}
The operation \eqref{eq:crossed-bracket} is an \(n\)-Lie bracket if and
only if, for all the elements chosen above,
\begin{align}
&\mathcal C(x_1,u_1;\ldots;x_{n-1},u_{n-1};
              \mathcal C_0,\mathcal V_0)
\notag\\
&\quad=\sum_{i=1}^{n}
\mathcal C(y_1,v_1;\ldots;y_{i-1},v_{i-1};
 \mathcal C_i,\mathcal V_i;
 y_{i+1},v_{i+1};\ldots;y_n,v_n).
\label{eq:crossed-direct-condition}
\end{align}
\end{theorem}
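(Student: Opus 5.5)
The plan is to obtain this criterion as the specialization of \Cref{thm:reduced-unified-product} noted just before the statement. The crossed-product operation \eqref{eq:crossed-bracket} is exactly the action-form operation \eqref{eq:action-direct-expansion} for the datum
\[
\Omega=(0,\rho_V,\vartheta_V,0,\omega,[-,\ldots,-]_V),
\]
that is, with \(\rho_{\g}=0\), \(\vartheta_{\g}=0\) and \(\{-,\ldots,-\}_V=[-,\ldots,-]_V\). Since all intermediate mixed brackets are zero in both constructions, comparing \eqref{eq:G-component} and \eqref{eq:H-component} with \eqref{eq:crossed-bracket} gives
\[
G=\mathcal C,
\qquad
H(x_1,u_1;\ldots;x_n,u_n)=[u_1,\ldots,u_n]_V.
\]
So \Cref{thm:reduced-unified-product} applies directly, and the operation is an \(n\)-Lie bracket if and only if \eqref{eq:direct-FI-g} and \eqref{eq:direct-FI-V} hold.

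Next I will identify the auxiliary quantities. Substituting \(\rho_{\g}=\vartheta_{\g}=0\) into \eqref{eq:G0}--\eqref{eq:Hi} should give
\[
G_0=\mathcal C_0,\quad H_0=\mathcal V_0,\quad G_i=\mathcal C_i,\quad H_i=\mathcal V_i,
\]
with the notation of \eqref{eq:crossed-C0}--\eqref{eq:crossed-Ci}. For \(G_i\) this needs a short sign check: the term \((-1)^{n-1}\vartheta_V(v_i)(x_1,\ldots,x_{n-1})\) in \eqref{eq:Gi} should be the \(i=n\) summand \((-1)^{n-1}\vartheta_V(v_n)(\ldots)\) of \eqref{eq:crossed-C}, with \(v_i\) in the last slot. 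Likewise \(\rho_V(u_1,\ldots,u_{n-1})y_i\) should be the \(i=n\) summand of the \(\rho_V\)-sum. This is a direct reading of the two formulas, not a new computation.

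With these identifications, \eqref{eq:direct-FI-g} becomes literally \eqref{eq:crossed-direct-condition}. The \(V\)-component \eqref{eq:direct-FI-V} becomes
\[
[u_1,\ldots,u_{n-1},[v_1,\ldots,v_n]_V]_V=\sum_{i=1}^n[v_1,\ldots,[u_1,\ldots,u_{n-1},v_i]_V,\ldots,v_n]_V,
\]
because \(H\) depends only on the \(V\)-arguments. This is the Filippov identity \eqref{eq:FI} of the \(n\)-Lie algebra \(V\), so it holds automatically. Hence the pair of conditions in \Cref{thm:reduced-unified-product} reduces to the single condition \eqref{eq:crossed-direct-condition}, which proves both directions.

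The only step needing care is the positional-sign bookkeeping in identifying \(G_i\) with \(\mathcal C_i\). Everything else is substitution, so I expect no real obstacle.
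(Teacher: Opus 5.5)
Your proposal is correct and is essentially the paper's argument. You specialize \Cref{thm:reduced-unified-product} to $\rho_{\g}=\vartheta_{\g}=0$ and $\{-,\ldots,-\}_V=[-,\ldots,-]_V$, whereas the paper re-expands both sides of \eqref{eq:FI} directly for the crossed bracket; the content is the same in both cases. The $\g$-components give \eqref{eq:crossed-direct-condition}, and the $V$-components reduce to the Filippov identity of $V$, which holds automatically. Your identifications $G=\mathcal C$, $H=[u_1,\ldots,u_n]_V$, $G_i=\mathcal C_i$ and $H_i=\mathcal V_i$ are all accurate.
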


\begin{proof}
The left-hand side of \eqref{eq:FI} expands as
\begin{align*}
&[z_1,\ldots,z_{n-1},[w_1,\ldots,w_n]]\\
&=[x_1+u_1,\ldots,x_{n-1}+u_{n-1},
  \mathcal C_0+\mathcal V_0]\\
&=\mathcal C(x_1,u_1;\ldots;x_{n-1},u_{n-1};
             \mathcal C_0,\mathcal V_0)
  +[u_1,\ldots,u_{n-1},\mathcal V_0]_V.
\end{align*}
For \(1\leq i\leq n\), the \(i\)-th summand on the right-hand side is
\begin{align*}
&[w_1,\ldots,w_{i-1},
 [z_1,\ldots,z_{n-1},w_i],
 w_{i+1},\ldots,w_n]\\
&=[y_1+v_1,\ldots,y_{i-1}+v_{i-1},
 \mathcal C_i+\mathcal V_i,
 y_{i+1}+v_{i+1},\ldots,y_n+v_n]\\
&=\mathcal C(y_1,v_1;\ldots;y_{i-1},v_{i-1};
 \mathcal C_i,\mathcal V_i;
 y_{i+1},v_{i+1};\ldots;y_n,v_n)\\
&\quad+[v_1,\ldots,v_{i-1},\mathcal V_i,
 v_{i+1},\ldots,v_n]_V.
\end{align*}
The \(V\)-components agree because
\begin{align*}
&[u_1,\ldots,u_{n-1},[v_1,\ldots,v_n]_V]_V\\
&\quad=\sum_{i=1}^{n}
[v_1,\ldots,v_{i-1},
 [u_1,\ldots,u_{n-1},v_i]_V,
 v_{i+1},\ldots,v_n]_V
\end{align*}
is \eqref{eq:FI} in \(V\).  Equality of the remaining
\(\g\)-components is exactly
\eqref{eq:crossed-direct-condition}.
\end{proof}

Several familiar conditions are visible by setting variables equal to
zero in \eqref{eq:crossed-direct-condition}.

\begin{proposition}[Explicit consequences]
\label{prop:crossed-consequences}
Let \(n>3\) and let
\((\vartheta_V,\rho_V,\omega)\) be a crossed system satisfying
\Cref{thm:crossed-direct}.  Then:

\begin{enumerate}[label=\textup{(\roman*)}]
\item \(\rho_V\) is a representation of \(V\) on \(\g\);

\item \(\omega\) is a one-cocycle for this representation, namely it
satisfies \eqref{eq:one-cocycle}, with
\(\{- ,\ldots,-\}_V=[- ,\ldots,-]_V\);

\item for fixed \(u\in V\) and \(y_2,\ldots,y_{n-1}\in\g\), the map
\[
 D_{u;y_2,\ldots,y_{n-1}}(x)
 =\vartheta_V(u)(x,y_2,\ldots,y_{n-1})
\]
is a derivation of \(\g\):
\begin{align}
&\vartheta_V(u)([x_1,\ldots,x_n]_{\g},y_2,\ldots,y_{n-1})
\notag\\
&\quad=\sum_{i=1}^{n}
 [x_1,\ldots,x_{i-1},
  \vartheta_V(u)(x_i,y_2,\ldots,y_{n-1}),
  x_{i+1},\ldots,x_n]_{\g};
\label{eq:thetaV-derivation}
\end{align}

\item for \(u_1,u_2\in V\) and \(x_3,\ldots,x_n,y_1,\ldots,y_{n-1}
\in\g\),
\begin{align}
0={}&-\vartheta_V(u_2)
 (\vartheta_V(u_1)(y_1,\ldots,y_{n-1}),x_3,\ldots,x_n)
\notag\\
&+\vartheta_V(u_1)
 (\vartheta_V(u_2)(y_1,\ldots,y_{n-1}),x_3,\ldots,x_n);
\label{eq:thetaV-two-V-zero}
\end{align}

\item for \(u_1,\ldots,u_n\in V\) and
\(y_1,\ldots,y_{n-1}\in\g\),
\begin{align}
&[\omega(u_1,\ldots,u_n),y_1,\ldots,y_{n-1}]_{\g}
 +\vartheta_V([u_1,\ldots,u_n]_V)(y_1,\ldots,y_{n-1})
\notag\\
&\quad=\sum_{i=1}^{n}(-1)^{n-i}\rho_V
 (u_1,\ldots,\widehat{u_i},\ldots,u_n)
 \vartheta_V(u_i)(y_1,\ldots,y_{n-1});
\label{eq:omega-rho-theta}\\
&\vartheta_V(v)
 (\omega(u_1,\ldots,u_n),y_2,\ldots,y_{n-1})=0
\label{eq:rho-omega-zero}
\end{align}
for every \(v\in V\) and \(y_2,\ldots,y_{n-1}\in\g\).
\end{enumerate}
\end{proposition}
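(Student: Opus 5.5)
The plan is to read off each item as a single homogeneous component of the direct crossed-product criterion \eqref{eq:crossed-direct-condition}. By \Cref{thm:crossed-direct}, \eqref{eq:crossed-direct-condition} holds for all \(x_a,u_a,y_i,v_i\), so I may set chosen variables to zero. Both sides are multilinear, so each specialisation leaves one identity in which only particular degrees survive. The hypothesis \(n>3\) is used through one counting fact. Every \(\vartheta_V\)-term of \(\mathcal C\) needs one \(V\)-slot and \(n-1\ge3\) nonzero \(\g\)-slots. Every \(\rho_V\)-term needs \(n-1\) \(V\)-slots. The crossed bracket has no component with between \(2\) and \(n-2\) entries from \(V\), and this range is nonempty. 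So in each specialisation most terms vanish, and I only have to identify the survivors and track the positional signs \((-1)^{i-1}\) and \((-1)^{n-i}\) of \eqref{eq:crossed-C}.

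The specialisations I would use are as follows.

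For (i), first identity \eqref{eq:rep1}: take \(z_a=u_a\), \(w_i=v_i\) for \(i\le n-1\), and \(w_n=y_n\), with all other variables zero. Then \(\mathcal C_0=\rho_V(v_1,\ldots,v_{n-1})y_n\) and \(\mathcal V_0=0\). Every \(\vartheta_V\)-term has at most two nonzero \(\g\)-slots and so vanishes. The identity reduces to \([\rho_V(U),\rho_V(V')]y_n=\rho_V(U\circ V')y_n\), where the term \(i=n\) gives \(\rho_V(v_1,\ldots,v_{n-1})\rho_V(U)y_n\).

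For (i), second identity \eqref{eq:rep2}: take \(z_a=u_a\) for \(a\le n-2\), \(z_{n-1}=x_{n-1}\), and all \(w_i=v_i\). I keep only the part of the \(\g\)-component that is linear in \(x_{n-1}\) and contains no \(\omega\). After the same vanishing argument, this gives \eqref{eq:rep2} for \(\rho_V\).

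For (ii): take all entries in \(V\), i.e.\ \(z_a=u_a\) and \(w_i=v_i\). The \(\g\)-component is then \(\omega(u,[v]_V)+\rho_V(u)\omega(v)\) on the left. On the right it is \(\sum\omega(\ldots,[u,v_i]_V,\ldots)+\sum_i(-1)^{n-i}\rho_V(\ldots\widehat{v_i}\ldots)\omega(u,v_i)\). This is exactly \eqref{eq:one-cocycle}.

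For (iii): take \(z_1=u\), \(z_a=y_a\) for \(2\le a\le n-1\), and \(w_i=x_i\). Here \(\mathcal C_0=[x]_{\g}\), and \(\mathcal C_i\) is \(\vartheta_V(u)(y_2,\ldots,y_{n-1},x_i)\) up to sign. The signs then have to be normalised to the ordering \((x,y_2,\ldots,y_{n-1})\) to obtain \eqref{eq:thetaV-derivation}.

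For (iv): take all \(z_a=y_a\) in \(\g\), and \(w_1=u_1\), \(w_2=u_2\), \(w_i=x_i\) for \(i\ge3\). The left side vanishes because \([u_1,u_2,x_3,\ldots,x_n]\) has two \(V\)-entries, a middle degree. On the right, only \(i=1,2\) contribute, and each contributes a single nested \(\vartheta_V\). These give \eqref{eq:thetaV-two-V-zero}.

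For \eqref{eq:omega-rho-theta}: take \(z_a=y_a\) and \(w_i=u_i\). The left side is \([y,\omega(u)]_{\g}+\vartheta_V([u]_V)(y)\) with sign, and the right side gives the \(\rho_V\vartheta_V\) sum.

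For \eqref{eq:rho-omega-zero}: take \(z_1=v\), \(z_a=y_a\) for \(a\ge2\), and \(w_i=u_i\). The left side gives \(\vartheta_V(v)(\ldots,\omega(u))\). On the right, each inner bracket \([v,y_2,\ldots,y_{n-1},u_i]\) has two \(V\)-entries and is zero.

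The main obstacle is sign bookkeeping rather than structure. Every \(\vartheta_V\) term must be brought to the normal form \(\vartheta_V(u)(\ldots)\) with the distinguished \(V\)-entry first, while \(\rho_V\) keeps its \(\g\)-argument last. For (iv), and for the \(\omega\)-linear part of the \(\rho_V\) identity in (i), I must also check that the cross terms produced by the middle-degree zeros really cancel and are not merely zero. I would therefore handle all items by the same routine:
\begin{enumerate}[label=\textup{(\alph*)}]
\item write the surviving terms with their \((-1)^{i-1}\) or \((-1)^{n-i}\) factors;
\item use the alternating property to move arguments into normal position, recording the resulting sign;
\item compare coefficients.
\end{enumerate}
I would first test each sign outcome against \Cref{ex:examples-six-map} with \(\rho_{\g}=\vartheta_{\g}=0\).
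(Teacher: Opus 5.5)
Your proposal is correct and is essentially the paper's proof. The paper evaluates \eqref{eq:FI}, whose $\g$-component is \eqref{eq:crossed-direct-condition}, on exactly the homogeneous tuples you list for each item, and normalises the signs in the same way. Your caution about $\omega$-terms in (i) and (iv) is unnecessary: for $n>3$, every term containing $\omega$ in those specialisations is identically zero, since it sits in a middle-degree bracket or in a $\vartheta_V$ or $\rho_V$ slot with a zero argument, so nothing has to be discarded or shown to cancel.
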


\begin{proof}
Let \(u_1,\ldots,u_n,v_1,\ldots,v_n\in V\) and
\(x,x_1,\ldots,x_n,y_1,\ldots,y_{n-1}\in\g\).  Put
\(U=(u_1,\ldots,u_{n-1})\) and
\(W=(v_1,\ldots,v_{n-1})\).

First use \eqref{eq:FI} on
\(u_1,\ldots,u_{n-1},v_1,\ldots,v_{n-1},x\).  Its
\(\g\)-components are
\begin{align*}
&\operatorname{pr}_{\g}
 [U,[W,x]]
 =\rho_V(U)\rho_V(W)x,\\
&\operatorname{pr}_{\g}
 \sum_{i=1}^{n-1}
 [v_1,\ldots,[U,v_i],\ldots,v_{n-1},x]
 =\sum_{i=1}^{n-1}
 \rho_V(v_1,\ldots,[U,v_i]_V,\ldots,v_{n-1})x,\\
&\operatorname{pr}_{\g}[W,[U,x]]
 =\rho_V(W)\rho_V(U)x,
\end{align*}
Thus
\begin{align*}
[\rho_V(U),\rho_V(W)]=\sum_{i=1}^{n-1}
 \rho_V(v_1,\ldots,[U,v_i]_V,\ldots,v_{n-1}),
\end{align*}
the first representation identity holds.

For the second representation identity, apply \eqref{eq:FI} to the
outer entries \(u_1,\ldots,u_{n-2},x\) and the inner entries
\(v_1,\ldots,v_n\).  The \(\g\)-component of the left-hand side is
\begin{align*}
&\operatorname{pr}_{\g}
[u_1,\ldots,u_{n-2},x,[v_1,\ldots,v_n]_V]\\
&=-\rho_V(u_1,\ldots,u_{n-2},[v_1,\ldots,v_n]_V)x.
\end{align*}
For the \(i\)-th term on the right, direct substitution gives
\begin{align*}
&\operatorname{pr}_{\g}[u_1,\ldots,u_{n-2},x,v_i]
=-\rho_V(u_1,\ldots,u_{n-2},v_i)x,\\
&\operatorname{pr}_{\g}
[v_1,\ldots,v_{i-1},[u_1,\ldots,u_{n-2},x,v_i],
v_{i+1},\ldots,v_n]\\
&=-(-1)^{n-i}
\rho_V(v_1,\ldots,\widehat{v_i},\ldots,v_n)
\rho_V(u_1,\ldots,u_{n-2},v_i)x.
\end{align*}
After the common minus sign is cancelled, \eqref{eq:FI} gives
\eqref{eq:rep2} for \(\rho_V\).  Together with the
preceding commutator identity, this proves (i).  For the pure \(V\)-entries
\(u_1,\ldots,u_{n-1},v_1,\ldots,v_n\), the two
\(\g\)-components of \eqref{eq:FI} are
\begin{align*}
&\operatorname{pr}_{\g}
 [u_1,\ldots,u_{n-1},[v_1,\ldots,v_n]]\\
&=\omega(u_1,\ldots,u_{n-1},[v_1,\ldots,v_n]_V)
 +\rho_V(u_1,\ldots,u_{n-1})\omega(v_1,\ldots,v_n),\\
&\operatorname{pr}_{\g}
 \sum_{i=1}^{n}
 [v_1,\ldots,[u_1,\ldots,u_{n-1},v_i],\ldots,v_n]\\
&=\sum_{i=1}^{n}
 \omega(v_1,\ldots,[u_1,\ldots,u_{n-1},v_i]_V,\ldots,v_n)\\
&\quad+\sum_{i=1}^{n}(-1)^{n-i}
 \rho_V(v_1,\ldots,\widehat{v_i},\ldots,v_n)
 \omega(u_1,\ldots,u_{n-1},v_i),
\end{align*}
Equality of these two displayed components is precisely
\eqref{eq:one-cocycle}; hence (ii) follows.

For (iii), use the outer entries
\(u,y_2,\ldots,y_{n-1}\) and the inner entries
\(x_1,\ldots,x_n\) in \eqref{eq:FI}.  The \(\g\)-components are
\begin{align*}
&\operatorname{pr}_{\g}
[u,y_2,\ldots,y_{n-1},[x_1,\ldots,x_n]_{\g}]\\
&=\vartheta_V(u)(y_2,\ldots,y_{n-1},
 [x_1,\ldots,x_n]_{\g}),
\end{align*}
and
\begin{align*}
&\sum_{i=1}^{n}\operatorname{pr}_{\g}
[x_1,\ldots,x_{i-1},
 [u,y_2,\ldots,y_{n-1},x_i],
 x_{i+1},\ldots,x_n]\\
&=\sum_{i=1}^{n}
[x_1,\ldots,x_{i-1},
 \vartheta_V(u)(y_2,\ldots,y_{n-1},x_i),
 x_{i+1},\ldots,x_n]_{\g}.
\end{align*}
Alternation gives
\begin{align*}
&\vartheta_V(u)([x_1,\ldots,x_n]_{\g},
 y_2,\ldots,y_{n-1})\\
&\quad=(-1)^{n-2}
\vartheta_V(u)(y_2,\ldots,y_{n-1},
 [x_1,\ldots,x_n]_{\g}),\\
&\vartheta_V(u)(x_i,y_2,\ldots,y_{n-1})
=(-1)^{n-2}\vartheta_V(u)(y_2,\ldots,y_{n-1},x_i).
\end{align*}
Multiplying the equality obtained from \eqref{eq:FI} by
\((-1)^{n-2}\) gives \eqref{eq:thetaV-derivation}.

For (iv), take \(y_1,\ldots,y_{n-1}\) as the outer entries and
\(u_1,u_2,x_3,\ldots,x_n\) as the inner entries in \eqref{eq:FI}.
The inner bracket on the left contains two \(V\)-entries and is zero.
On the right,
\begin{align*}
[y_1,\ldots,y_{n-1},u_j]
=(-1)^{n-1}\vartheta_V(u_j)(y_1,\ldots,y_{n-1})
\qquad(j=1,2).
\end{align*}
The summands corresponding to \(x_3,\ldots,x_n\) still contain two
\(V\)-entries and vanish.  Therefore the \(\g\)-component of
\eqref{eq:FI} is
\begin{align*}
0={}&(-1)^n\vartheta_V(u_2)
 (\vartheta_V(u_1)(y_1,\ldots,y_{n-1}),x_3,\ldots,x_n)\\
&+(-1)^{n-1}\vartheta_V(u_1)
 (\vartheta_V(u_2)(y_1,\ldots,y_{n-1}),x_3,\ldots,x_n).
\end{align*}
Cancelling \((-1)^{n-1}\) gives
\eqref{eq:thetaV-two-V-zero}.

For the first identity in (v), use \(y_1,\ldots,y_{n-1}\) as the
outer entries and \(u_1,\ldots,u_n\) as the inner entries.  The
\(\g\)-component on the left of \eqref{eq:FI} is
\begin{align*}
&\operatorname{pr}_{\g}
[y_1,\ldots,y_{n-1},[u_1,\ldots,u_n]]\\
&=(-1)^{n-1}\Bigl(
 [\omega(u_1,\ldots,u_n),y_1,\ldots,y_{n-1}]_{\g}
\notag\\
&\hspace{24mm}
 +\vartheta_V([u_1,\ldots,u_n]_V)
   (y_1,\ldots,y_{n-1})\Bigr).
\end{align*}
For \(1\leq i\leq n\),
\begin{align*}
[y_1,\ldots,y_{n-1},u_i]
=(-1)^{n-1}\vartheta_V(u_i)(y_1,\ldots,y_{n-1}),
\end{align*}
so the \(\g\)-component on the right is
\begin{align*}
\sum_{i=1}^{n}(-1)^{n-1+n-i}
\rho_V(u_1,\ldots,\widehat{u_i},\ldots,u_n)
\vartheta_V(u_i)(y_1,\ldots,y_{n-1}).
\end{align*}
Cancelling \((-1)^{n-1}\) gives
\eqref{eq:omega-rho-theta}.

Finally, put \(v,y_2,\ldots,y_{n-1}\) in the outer block and
\(u_1,\ldots,u_n\) in the inner block.  The right-hand side of
\eqref{eq:FI} is zero because each inner bracket contains two
\(V\)-entries.  Its left-hand side has \(\g\)-component
\begin{align*}
\vartheta_V(v)(y_2,\ldots,y_{n-1},
 \omega(u_1,\ldots,u_n))=0.
\end{align*}
Moving \(\omega(u_1,\ldots,u_n)\) across the \(n-2\) entries
\(y_2,\ldots,y_{n-1}\) gives \eqref{eq:rho-omega-zero}.
\end{proof}

\subsection{The non-abelian extension problem}

Following the non-abelian extension viewpoint of
\cite{SongMakhloufTang,AfiBasdouri}, consider a short exact sequence of
\(n\)-Lie algebra morphisms
\begin{equation}
0\longrightarrow\g\xrightarrow{\,i\,}E
 \xrightarrow{\,p\,}V\longrightarrow0.
\label{eq:nonabelian-extension}
\end{equation}
A linear section is a map \(\sigma:V\to E\) satisfying
\(p\sigma=\id_V\).  We identify \(\g\) with \(i(\g)\).

\begin{definition}\label{def:sparse-extension}
The extension \eqref{eq:nonabelian-extension} is \emph{sparse relative
to \(\sigma\)} if
\[
 [\sigma(u_1),\ldots,\sigma(u_k),x_{k+1},\ldots,x_n]_E=0
 \quad(2\leq k\leq n-2).
\]
\end{definition}

For such a section define
\begin{align}
\vartheta_V(u)(x_2,\ldots,x_n)
 &=[\sigma(u),x_2,\ldots,x_n]_E,
\label{eq:extension-thetaV}\\
\rho_V(u_1,\ldots,u_{n-1})x
 &=[\sigma(u_1),\ldots,\sigma(u_{n-1}),x]_E,
\label{eq:extension-rhoV}\\
\omega(u_1,\ldots,u_n)
 &=[\sigma(u_1),\ldots,\sigma(u_n)]_E
   -\sigma([u_1,\ldots,u_n]_V).
\label{eq:extension-omega}
\end{align}
All three values lie in \(\g\), since \(\g=\Ker p\) is an ideal.

\begin{theorem}\label{thm:nonabelian-crossed}
The linear isomorphism
\[
 \Phi:\g\oplus V\longrightarrow E,
 \qquad \Phi(x+u)=x+\sigma(u),
\]
is an \(n\)-Lie algebra isomorphism
\[
 \g\#_{\rho_V,\vartheta_V,\omega}V\cong E,
\]
where the three maps are given by
\eqref{eq:extension-thetaV}--\eqref{eq:extension-omega}.  Conversely,
every crossed product gives a sparse non-abelian extension
\[
0\longrightarrow\g\longrightarrow
\g\#_{\rho_V,\vartheta_V,\omega}V
\longrightarrow V\longrightarrow0.
\]
\end{theorem}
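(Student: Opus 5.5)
The plan is to avoid checking the Filippov identity by hand and instead transport the bracket of \(E\) along \(\Phi\). Since \(p\sigma=\id_V\) and \(\g=\Ker p\), the map \(\Phi(x+u)=x+\sigma(u)\) is a linear isomorphism. Define the pulled-back bracket on \(\g\oplus V\) by
\[
[a_1,\ldots,a_n]^\Phi:=\Phi^{-1}[\Phi(a_1),\ldots,\Phi(a_n)]_E .
\]
This is automatically an \(n\)-Lie bracket, and \(\Phi\) is an isomorphism onto \(E\) for it. It therefore suffices to show that \([-,\ldots,-]^\Phi\) coincides with the crossed-product operation \eqref{eq:crossed-bracket}. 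That also yields, for free, that \eqref{eq:crossed-bracket} satisfies \Cref{thm:crossed-direct}.

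To identify the two brackets, I will expand \([x_1+\sigma(u_1),\ldots,x_n+\sigma(u_n)]_E\) by multilinearity, exactly as in the proof of \Cref{thm:realization}. Moving the \(\sigma(u)\)-entries to the front produces the signs \((-1)^{i-1}\) and \((-1)^{n-i}\). The terms then sort by degree as follows.

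\begin{itemize}
\item The all-\(\g\) term is \([x_1,\ldots,x_n]_\g\).
\item The terms with one \(V\)-entry give \(\sum_i(-1)^{i-1}\vartheta_V(u_i)(x_1,\ldots,\widehat{x_i},\ldots,x_n)\) by \eqref{eq:extension-thetaV}.
\item The terms with \(2\le k\le n-2\) entries \(\sigma(u)\) vanish by sparsity (Definition~\ref{def:sparse-extension}).
\item The terms with \(n-1\) entries give \(\sum_i(-1)^{n-i}\rho_V(u_1,\ldots,\widehat{u_i},\ldots,u_n)x_i\) by \eqref{eq:extension-rhoV}.
\item The pure term equals \(\omega(u_1,\ldots,u_n)+\sigma([u_1,\ldots,u_n]_V)\) by \eqref{eq:extension-omega}.
\end{itemize}

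Every term except \(\sigma([u_1,\ldots,u_n]_V)\) lies in \(\g\), because \(\g\) is an ideal. Hence applying \(\Phi^{-1}\) fixes all of them and sends \(\sigma([u_1,\ldots,u_n]_V)\) to \([u_1,\ldots,u_n]_V\). The result is exactly \eqref{eq:crossed-bracket}. Two small checks are needed: that \(\omega\) lands in \(\g\), which holds because \(p\) is a morphism, so \(p[\sigma(u_1),\ldots,\sigma(u_n)]_E=[u_1,\ldots,u_n]_V\); and that the alternating conventions match \eqref{eq:def-rho-actions}--\eqref{eq:def-pure-V} with \(\rho_\g=\vartheta_\g=0\).

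For the converse, let \(\g\#_{\rho_V,\vartheta_V,\omega}V\) be a crossed product, so \eqref{eq:crossed-bracket} is an \(n\)-Lie bracket. I will take the inclusion \(i(x)=x\), the projection \(p(x+u)=u\), and the section \(\sigma(u)=u\), and check three things.

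\begin{itemize}
\item \(i\) is a morphism, since the bracket restricted to \(\g\) is \([-,\ldots,-]_\g\).
\item \(p\) is a morphism, since the \(V\)-component of \eqref{eq:crossed-bracket} is exactly \([u_1,\ldots,u_n]_V\). Together with the first point, the sequence is exact.
\item The extension is sparse relative to \(\sigma\), because \eqref{eq:crossed-bracket} has no terms with between \(2\) and \(n-2\) entries from \(V\).
\end{itemize}

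Finally, applying \eqref{eq:extension-thetaV}--\eqref{eq:extension-omega} to this section returns the original \(\vartheta_V,\rho_V,\omega\). For \(\omega\), the correction term \(-\sigma([u_1,\ldots,u_n]_V)\) cancels the \(V\)-component.

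I do not expect any conceptual obstacle. The main care point is the sign bookkeeping in the multilinear expansion, so that \(\vartheta_V\) carries the sign \((-1)^{i-1}\) and \(\rho_V\) carries \((-1)^{n-i}\) consistently with Proposition~\ref{prop:direct-bracket}. To keep this clean, I will reuse the sign computation already carried out in the proof of \Cref{thm:realization} rather than redoing it.
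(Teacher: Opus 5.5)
Your proposal is correct and follows essentially the same route as the paper. Both arguments expand $[x_1+\sigma(u_1),\ldots,x_n+\sigma(u_n)]_E$ multilinearly, use sparsity to kill the middle degrees, and match the surviving terms with \eqref{eq:crossed-bracket}; the converse then uses the canonical inclusion and projection. Your transport-of-structure phrasing is equivalent to the paper's conclusion that $\Phi$ is bijective and bracket-preserving, and your explicit checks in the converse (sparsity, and recovery of $\vartheta_V,\rho_V,\omega$ from the section $\sigma(u)=u$) fill in steps the paper leaves implicit.
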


\begin{proof}
For \(x\in\g\), \(u\in V\), and \(e\in E\), define
\[
\Phi(x+u)=x+\sigma(u),\qquad
\Phi^{-1}(e)=(e-\sigma p(e))+p(e).
\]
These maps are inverse because \(p\sigma=\id_V\) and
\(\ker p=\g\).  Take \(x_1,\ldots,x_n\in\g\) and
\(u_1,\ldots,u_n\in V\).  Expanding the bracket gives
\begin{align*}
&[\Phi(x_1+u_1),\ldots,\Phi(x_n+u_n)]_E\\
&=[x_1+\sigma(u_1),\ldots,x_n+\sigma(u_n)]_E\\
&=[x_1,\ldots,x_n]_{\g}
 +\sum_{i=1}^{n}(-1)^{i-1}
 [\sigma(u_i),x_1,\ldots,\widehat{x_i},\ldots,x_n]_E\\
&\quad+\sum_{k=2}^{n-2}
 \sum_{1\leq i_1<\cdots<i_k\leq n}
 (-1)^{i_1+\cdots+i_k-k(k+1)/2}\\
&\qquad\qquad\cdot
 [\sigma(u_{i_1}),\ldots,\sigma(u_{i_k}),
 x_{j_1},\ldots,x_{j_{n-k}}]_E\\
&\quad+\sum_{i=1}^{n}(-1)^{n-i}
 [\sigma(u_1),\ldots,\widehat{\sigma(u_i)},\ldots,
 \sigma(u_n),x_i]_E\\
&\quad+[\sigma(u_1),\ldots,\sigma(u_n)]_E\\
\intertext{The middle mixed sum is zero by
\Cref{def:sparse-extension}; therefore}
&=[x_1,\ldots,x_n]_{\g}
 +\sum_{i=1}^{n}(-1)^{i-1}\vartheta_V(u_i)
 (x_1,\ldots,\widehat{x_i},\ldots,x_n)\\
&\quad+\sum_{i=1}^{n}(-1)^{n-i}\rho_V
 (u_1,\ldots,\widehat{u_i},\ldots,u_n)x_i\\
&\quad+\omega(u_1,\ldots,u_n)
 +\sigma([u_1,\ldots,u_n]_V)\\
&=\Phi([x_1+u_1,\ldots,x_n+u_n]_{\rho_V,\vartheta_V,\omega}).
\end{align*}
Thus \(\Phi\) preserves the bracket on arbitrary elements
\(x_i+u_i\), and hence is an \(n\)-Lie algebra isomorphism.

Conversely, in a crossed product the inclusion and projection satisfy
\[
\begin{gathered}
i_{\g}(x)=x+0,\qquad p_V(x+u)=u,\\
\ker p_V=\g,\qquad
p_V([x_1+u_1,\ldots,x_n+u_n])=[u_1,\ldots,u_n]_V.
\end{gathered}
\]
Thus we get a sparse non-abelian extension
\[
0\longrightarrow\g\xrightarrow{i_{\g}} \g\#_{\rho_V,\vartheta_V,\omega}V\xrightarrow{p_V}V\longrightarrow 0.
\]
\end{proof}

The next theorem records the change of section without suppressing the
middle-degree equations.

\begin{theorem}[Direct change-of-section formulas]
\label{thm:crossed-equivalence}
Let \(n>3\), and let
\((\vartheta_V^{(1)},\rho_V^{(1)},\omega_1)\) and
\((\vartheta_V^{(2)},\rho_V^{(2)},\omega_2)\) be crossed systems for
the fixed algebras \(\g\) and \(V\).  The map
\begin{equation}
 \psi_{\xi}(x+u)=x-\xi(u)+u,
 \qquad \xi:V\longrightarrow\g.
\label{eq:crossed-psi-xi}
\end{equation}
is an isomorphism from the second crossed product to the first, inducing
the identity on \(\g\) and \(V\), if and only if the following equations
hold:
\begin{align}
\vartheta_V^{(2)}(u)(x_2,\ldots,x_n)
={}&\vartheta_V^{(1)}(u)(x_2,\ldots,x_n)
 -[\xi(u),x_2,\ldots,x_n]_{\g};
\label{eq:change-thetaV}
\end{align}
for \(2\leq k\leq n-2\),
\begin{align}
0={}&[\xi(u_1),\ldots,\xi(u_k),x_{k+1},\ldots,x_n]_{\g}
\notag\\
&+\sum_{i=1}^{k}(-1)^i\vartheta_V^{(1)}(u_i)
 (\xi(u_1),\ldots,\widehat{\xi(u_i)},\ldots,\xi(u_k),
  x_{k+1},\ldots,x_n);
\label{eq:change-middle}
\end{align}
for \(u_1,\ldots,u_{n-1}\in V\) and \(x\in\g\),
\begin{multline}
\rho_V^{(2)}(u_1,\ldots,u_{n-1})x
=\rho_V^{(1)}(u_1,\ldots,u_{n-1})x
\\
+\sum_{i=1}^{n-1}(-1)^{n+i-3}\vartheta_V^{(1)}(u_i)
 (\xi(u_1),\ldots,\widehat{\xi(u_i)},\ldots,
  \xi(u_{n-1}),x)
\\
+(-1)^{n-1}[\xi(u_1),\ldots,\xi(u_{n-1}),x]_{\g};
\label{eq:change-rhoV}
\end{multline}
and
\begin{align}
&\omega_2(u_1,\ldots,u_n)-\xi([u_1,\ldots,u_n]_V)
\notag\\
&=\omega_1(u_1,\ldots,u_n)
\notag\\
&\quad+\sum_{i=1}^{n}(-1)^{n+i-2}\vartheta_V^{(1)}(u_i)
 (\xi(u_1),\ldots,\widehat{\xi(u_i)},\ldots,\xi(u_n))
\notag\\
&\quad+\sum_{i=1}^{n}(-1)^{n-i+1}\rho_V^{(1)}
 (u_1,\ldots,\widehat{u_i},\ldots,u_n)\xi(u_i)
\notag\\
&\quad+(-1)^n[\xi(u_1),\ldots,\xi(u_n)]_{\g}.
\label{eq:change-omega}
\end{align}
\end{theorem}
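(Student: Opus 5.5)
The approach is to apply \Cref{thm:action-morphisms} directly, with the source datum being the second crossed system and the target datum the first. A crossed product is the unified product with $\rho_{\g}=0$, $\vartheta_{\g}=0$ and $\{-,\ldots,-\}_V=[-,\ldots,-]_V$. The map $\psi_\xi$ fixes $\g$ pointwise and has the form \eqref{eq:psi-rs} with $r=-\xi$ and $s=\id_V$. Since $s$ is invertible, $\psi_\xi$ is automatically a linear isomorphism inducing the identity on $\g$ and on $V\cong E/\g$. So the only issue is bracket preservation, and by \Cref{thm:action-morphisms} this is equivalent to the listed family \eqref{eq:morphism-one-gpart}--\eqref{eq:morphism-pure-V}, specialized to these data.

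I would then go through the identities degree by degree.

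\begin{itemize}
\item The $V$-component identities \eqref{eq:morphism-one-Vpart}, \eqref{eq:morphism-middle-V} and \eqref{eq:morphism-nminusone-V} become $0=0$, because $\rho_{\g}=\rho'_{\g}=0$ and $\vartheta_{\g}=\vartheta'_{\g}=0$. The identity \eqref{eq:morphism-pure-V} becomes $[u_1,\ldots,u_n]_V=[u_1,\ldots,u_n]_V$. So no $V$-component conditions survive.
\item For one $V$-entry, \eqref{eq:morphism-one-gpart} reads $\vartheta^{(2)}_V(u)=-[\xi(u),\ldots]_{\g}+\vartheta^{(1)}_V(u)$. This is \eqref{eq:change-thetaV}.
\item For the middle degrees $2\le k\le n-2$ (this is where $n>3$ is used), \eqref{eq:morphism-middle-g} with $r=-\xi$ gives
\[
(-1)^k[\xi(u_1),\ldots,\xi(u_k),x_{k+1},\ldots,x_n]_{\g}
+\sum_{i}(-1)^{i-1}(-1)^{k-1}\vartheta^{(1)}_V(u_i)(\ldots).
\]
Multiplying by $(-1)^k$ yields \eqref{eq:change-middle}.
\item For $n-1$ entries of $V$, \eqref{eq:morphism-nminusone-g} gives \eqref{eq:change-rhoV}. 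The signs are $(-1)^{n-1}$ from the $n-1$ copies of $-\xi$ in the pure $\g$-bracket, and $(-1)^{i-1}(-1)^{n-2}=(-1)^{n+i-3}$ in the $\vartheta^{(1)}_V$-sum.
\item For $n$ entries of $V$, \eqref{eq:morphism-pure-g} with $r([u]_V)=-\xi([u]_V)$ gives \eqref{eq:change-omega}. The sign factors are $(-1)^n$ on the pure bracket, $(-1)^{i-1}(-1)^{n-1}=(-1)^{n+i-2}$ on the $\vartheta$-terms, and $-(-1)^{n-i}=(-1)^{n-i+1}$ on the $\rho^{(1)}_V$-terms.
\end{itemize}

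Since \Cref{thm:action-morphisms} is an ``if and only if'', the converse direction comes for free.

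The only real obstacle is sign bookkeeping. Each substitution $r=-\xi$ contributes $(-1)^{\#\xi}$, and this has to be reconciled with the positional signs $(-1)^{i-1}$ and $(-1)^{n-i}$ already present in \Cref{thm:action-morphisms}. I would check each of the four degree cases separately, counting the number of $\xi$'s as $k$, $k-1$, $n-1$, $n-2$, $n$ or $n-1$ as appropriate. As a safety check, I would compare the $k=1$ analogue against \eqref{eq:change-thetaV} to confirm the pattern.
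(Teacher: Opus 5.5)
Your proposal is correct and is essentially the paper's argument. The paper checks bracket preservation for $\psi_\xi$ directly on the homogeneous tuples with one, $k$ ($2\leq k\leq n-2$), $n-1$ and $n$ entries of $V$, using exactly your sign counts (including multiplying the middle-degree identity by $(-1)^k$); you read off the same identities by specializing \Cref{thm:action-morphisms} to $r=-\xi$, $s=\id_V$, $\rho_{\g}=\vartheta_{\g}=0$, $\{-,\ldots,-\}_V=[-,\ldots,-]_V$, and there the $V$-component conditions do collapse to trivialities, with the only cosmetic difference being that the paper gets invertibility from $\psi_\xi^{-1}=\psi_{-\xi}$ rather than from invertibility of $s$.
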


\begin{proof}
We check bracket preservation on explicit homogeneous tuples.  Let
\(u\in V\) and \(x_2,\ldots,x_n\in\g\).  Then
\begin{align*}
\psi_{\xi}([u,x_2,\ldots,x_n]_{(2)})
&=\vartheta_V^{(2)}(u)(x_2,\ldots,x_n),\\
[\psi_{\xi}(u),x_2,\ldots,x_n]_{(1)}
&=[-\xi(u)+u,x_2,\ldots,x_n]_{(1)}\\
&=-[\xi(u),x_2,\ldots,x_n]_{\g}
 +\vartheta_V^{(1)}(u)(x_2,\ldots,x_n),
\end{align*}
Hence the bracket-preservation equality
\[
\psi_{\xi}([u,x_2,\ldots,x_n]_{(2)})
=[\psi_{\xi}(u),x_2,\ldots,x_n]_{(1)}
\]
is exactly \eqref{eq:change-thetaV}.
Next take \(u_1,\ldots,u_k\in V\),
\(x_{k+1},\ldots,x_n\in\g\), where \(2\leq k\leq n-2\).
The source bracket is zero and the target bracket expands as
\begin{align*}
&\psi_{\xi}([u_1,\ldots,u_k,x_{k+1},\ldots,x_n]_{(2)})=0,\\
&[\psi_{\xi}(u_1),\ldots,\psi_{\xi}(u_k),
 x_{k+1},\ldots,x_n]_{(1)}\\
&=(-1)^k[\xi(u_1),\ldots,\xi(u_k),x_{k+1},\ldots,x_n]_{\g}\\
&\quad+\sum_{i=1}^{k}
 (-1)^{k-1}(-1)^{i-1}\vartheta_V^{(1)}(u_i)
 (\xi(u_1),\ldots,\widehat{\xi(u_i)},\ldots,\xi(u_k),
 x_{k+1},\ldots,x_n),
\end{align*}
After multiplying the target expression by \((-1)^k\), equality of
the two brackets gives
\begin{align*}
0={}&[\xi(u_1),\ldots,\xi(u_k),
 x_{k+1},\ldots,x_n]_{\g}\\
&+\sum_{i=1}^{k}(-1)^i\vartheta_V^{(1)}(u_i)
 (\xi(u_1),\ldots,\widehat{\xi(u_i)},\ldots,\xi(u_k),
 x_{k+1},\ldots,x_n).
\end{align*}
This is \eqref{eq:change-middle} for the chosen \(k\).
For \(u_1,\ldots,u_{n-1}\in V\) and \(x\in\g\), direct expansion
gives
\begin{align*}
&\psi_{\xi}([u_1,\ldots,u_{n-1},x]_{(2)})\\
&\quad=\rho_V^{(2)}(u_1,\ldots,u_{n-1})x,\\
&[\psi_{\xi}(u_1),\ldots,\psi_{\xi}(u_{n-1}),x]_{(1)}\\
&=\rho_V^{(1)}(u_1,\ldots,u_{n-1})x\\
&\quad+\sum_{i=1}^{n-1}
 (-1)^{n-2}(-1)^{i-1}
 \vartheta_V^{(1)}(u_i)
 (\xi(u_1),\ldots,\widehat{\xi(u_i)},\ldots,
 \xi(u_{n-1}),x)\\
&\quad+(-1)^{n-1}
 [\xi(u_1),\ldots,\xi(u_{n-1}),x]_{\g}\\
&=\rho_V^{(1)}(u_1,\ldots,u_{n-1})x\\
&\quad+\sum_{i=1}^{n-1}(-1)^{n+i-3}
 \vartheta_V^{(1)}(u_i)
 (\xi(u_1),\ldots,\widehat{\xi(u_i)},\ldots,
 \xi(u_{n-1}),x)\\
&\quad+(-1)^{n-1}
 [\xi(u_1),\ldots,\xi(u_{n-1}),x]_{\g}.
\end{align*}
Therefore the bracket-preservation equality
\[
\psi_{\xi}([u_1,\ldots,u_{n-1},x]_{(2)})
=[\psi_{\xi}(u_1),\ldots,\psi_{\xi}(u_{n-1}),x]_{(1)}
\]
is precisely \eqref{eq:change-rhoV}.

Finally, for \(u_1,\ldots,u_n\in V\), the two pure-\(V\) brackets are
\begin{align*}
&\psi_{\xi}([u_1,\ldots,u_n]_{(2)})\\
&=\omega_2(u_1,\ldots,u_n)-\xi([u_1,\ldots,u_n]_V)
 +[u_1,\ldots,u_n]_V,\\
&[\psi_{\xi}(u_1),\ldots,\psi_{\xi}(u_n)]_{(1)}\\
&=(-1)^n[\xi(u_1),\ldots,\xi(u_n)]_{\g}
 +\omega_1(u_1,\ldots,u_n)+[u_1,\ldots,u_n]_V\\
&\quad+\sum_{i=1}^{n}
 (-1)^{n-1}(-1)^{i-1}\vartheta_V^{(1)}(u_i)
 (\xi(u_1),\ldots,\widehat{\xi(u_i)},\ldots,\xi(u_n))\\
&\quad+\sum_{i=1}^{n}
 (-1)^{n-i}(-1)\rho_V^{(1)}
 (u_1,\ldots,\widehat{u_i},\ldots,u_n)\xi(u_i)\\
&=(-1)^n[\xi(u_1),\ldots,\xi(u_n)]_{\g}
 +\omega_1(u_1,\ldots,u_n)+[u_1,\ldots,u_n]_V\\
&\quad+\sum_{i=1}^{n}(-1)^{n+i-2}\vartheta_V^{(1)}(u_i)
 (\xi(u_1),\ldots,\widehat{\xi(u_i)},\ldots,\xi(u_n))\\
&\quad+\sum_{i=1}^{n}(-1)^{n-i+1}\rho_V^{(1)}
 (u_1,\ldots,\widehat{u_i},\ldots,u_n)\xi(u_i).
\end{align*}
Thus the equality
\[
\psi_{\xi}([u_1,\ldots,u_n]_{(2)})
=[\psi_{\xi}(u_1),\ldots,\psi_{\xi}(u_n)]_{(1)}
\]
gives \eqref{eq:change-omega} after comparing the \(\g\)-components;
the \(V\)-components are both \([u_1,\ldots,u_n]_V\).
The all-\(\g\) bracket is fixed automatically, and the preceding
tuples cover every remaining mixed degree.  Thus \(\psi_{\xi}\) is an
\(n\)-Lie algebra morphism exactly when
\eqref{eq:change-thetaV}--\eqref{eq:change-omega} hold.  Its inverse
is \(\psi_{-\xi}\), so it is then an isomorphism.
\end{proof}

\begin{corollary}\label{cor:nonabelian-classification}
Sparse non-abelian extensions of \(V\) by \(\g\), up to isomorphisms
which are the identity on the kernel and quotient, are classified by
crossed systems modulo the transformations
\eqref{eq:change-thetaV}--\eqref{eq:change-omega}.
\end{corollary}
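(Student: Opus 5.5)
To prove \Cref{cor:nonabelian-classification}, I will combine \Cref{thm:nonabelian-crossed} with \Cref{thm:crossed-equivalence}, the same way abelian extensions were classified by $\mathbf{H}^1$ in \Cref{sec:prelim}. The goal is a bijection between isomorphism classes of sparse extensions and crossed systems modulo the $\xi$-transformations. It has three parts: a map from sparse extensions to crossed systems, surjectivity of that map, and a comparison of the two equivalence relations.

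\emph{Step 1: from extensions to crossed systems, and surjectivity.} Fix a sparse extension $E$ with a section $\sigma$ for which it is sparse. Formulas \eqref{eq:extension-thetaV}--\eqref{eq:extension-omega} define a crossed system $(\rho_V,\vartheta_V,\omega)$. By \Cref{thm:nonabelian-crossed}, $\Phi$ identifies $E$ with $\g\#_{\rho_V,\vartheta_V,\omega}V$, and $\Phi$ is the identity on $\g$ and induces the identity on $V$. So $E$ is isomorphic, in the required sense, to a crossed product. Conversely, the second half of \Cref{thm:nonabelian-crossed} shows that every crossed system arises from some sparse extension, which gives surjectivity.

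\emph{Step 2: well-definedness and injectivity on classes.} Let $F\colon E\to E'$ be an isomorphism of sparse extensions that is the identity on $\g$ and on $V$. Transport through the two maps $\Phi$. This gives an isomorphism between the two crossed products, with matrix $\begin{pmatrix}\id_{\g}&r\\0&\id_V\end{pmatrix}$ relative to $\g\oplus V$. Its $V$-block is $\id_V$ because the quotient map is fixed. Hence it has the form $\psi_\xi$ with $\xi=-r$. By \Cref{thm:crossed-equivalence}, the two crossed systems are then related by \eqref{eq:change-thetaV}--\eqref{eq:change-omega}. Conversely, if two crossed systems are related by these transformations, the same theorem shows that $\psi_\xi$ is an isomorphism of crossed products that is the identity on kernel and quotient. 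Composing with the maps $\Phi$ gives an isomorphism of the corresponding extensions.

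The same argument shows that changing the section $\sigma$ to $\sigma'=\sigma-\xi$ changes the crossed system by exactly these transformations, provided both sections are sparse. So the class assigned to $E$ does not depend on the chosen section.

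\emph{Step 3: the relation is an equivalence relation.} The relation ``related by \eqref{eq:change-thetaV}--\eqref{eq:change-omega}'' must be an equivalence relation. I will derive this from the group structure rather than from the formulas. We have $\psi_0=\id$, $\psi_\xi^{-1}=\psi_{-\xi}$, and $\psi_\xi\psi_\eta=\psi_{\xi+\eta}$ as linear maps. Composites and inverses of isomorphisms are isomorphisms, and they again have the form $\psi_\zeta$. So the relation is reflexive, symmetric and transitive.

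\emph{Main obstacle.} The delicate point is sparsity. Sparsity is defined relative to a section, so an arbitrary isomorphism $F$ need not carry a sparse section of $E$ to a sparse section of $E'$. The middle-degree equations \eqref{eq:change-middle} are exactly what control this. To handle it, I will note that both crossed products are sparse by construction, since their middle brackets vanish. Consequently \eqref{eq:change-middle} is automatically part of the morphism condition for $\psi_\xi$, and no extra hypothesis is needed. I will then check that every sign bookkeeping step goes through \Cref{thm:crossed-equivalence} rather than recomputing it.
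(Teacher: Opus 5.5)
Your proposal is correct and is the argument the paper leaves implicit, since the corollary is stated without proof right after \Cref{thm:crossed-equivalence}: transport isomorphisms through the maps $\Phi$ of \Cref{thm:nonabelian-crossed}, observe that the result has the form $\psi_\xi$, and invoke the ``if and only if'' of \Cref{thm:crossed-equivalence}, with your identity $\psi_\xi\psi_\eta=\psi_{\xi+\eta}$ giving the equivalence-relation property. One correction to your ``main obstacle'' paragraph: an isomorphism $F$ fixing $\g$ always sends a sparse section $\sigma$ to a sparse section $F\sigma$, because applying $F$ to the vanishing middle brackets gives zero; the real subtlety is only that $\sigma-\xi$ is again sparse exactly when \eqref{eq:change-middle} holds, and your restriction to sparse sections already accounts for this.
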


We now specialize \Cref{ex:examples-six-map} by setting
\(\rho_{\g}=\vartheta_{\g}=0\), exactly as required in
\Cref{def:crossed-system}.  This makes \(\g\) an ideal while retaining
both extreme \(\g\)-valued mixed components.

\begin{example}[Three-parameter crossed product]
\label{ex:examples-crossed}
Let \(\g\) and \(V\) have the same bases as in
\Cref{ex:examples-six-map}.  Give \(\g\) the bracket
\eqref{eq:examples-g-standard}, and give \(V\) the bracket
\begin{equation}
 [u_1,\ldots,u_n]_V=u_n,
\label{eq:examples-crossed-V}
\end{equation}
with \(u_0\) central.  For arbitrary
\(\alpha,\beta,\gamma\in K \), define
\begin{align}
 \vartheta_V(u_0)(X_0)&=(-1)^{n-1}\alpha x_n,
\label{eq:examples-crossed-theta}\\
 \rho_V(U_0)z&=\beta z,
\label{eq:examples-crossed-rho}\\
 \omega(u_1,\ldots,u_n)&=\gamma z,
\label{eq:examples-crossed-omega}
\end{align}
and set \(\rho_{\g}=\vartheta_{\g}=0\).  Then
\(E_{\alpha,\beta,\gamma}
 =\g\#_{\rho_V,\vartheta_V,\omega}V\) is the crossed product with
nonzero brackets
\begin{align}
 [x_1,\ldots,x_{n-1},x_n]&=x_n,
\label{eq:examples-crossed-final-1}\\
 [x_1,\ldots,x_{n-1},u_0]&=\alpha x_n,
\label{eq:examples-crossed-final-2}\\
 [u_1,\ldots,u_{n-1},z]&=\beta z,
\label{eq:examples-crossed-final-3}\\
 [u_1,\ldots,u_{n-1},u_n]&=\gamma z+u_n.
\label{eq:examples-crossed-final-4}
\end{align}
Moreover,
\begin{equation}
 0\longrightarrow\g\longrightarrow E_{\alpha,\beta,\gamma}
 \longrightarrow V\longrightarrow0
\label{eq:examples-crossed-exact}
\end{equation}
is a non-abelian extension.
\end{example}

\section{Matched pairs and extreme factorizations}
\label{sec:matched}

Let \(\g\) and \(V\) be \(n\)-Lie algebras.  Motivated by the
factorization and bicrossed-product constructions of
\cite{AD,MajidMatched,MajidPhysics,ZhangExtending}, a matched pair in the
present extreme-degree setting uses four actions
\begin{align*}
 \rho_{\g}&:\bigwedge^{n-1}\g\longrightarrow\operatorname{End}(V),
 &\rho_V&:\bigwedge^{n-1}V\longrightarrow\operatorname{End}(\g),\\
 \vartheta_V&:V\longrightarrow\Hom(\bigwedge^{n-1}\g,\g),
 &\vartheta_{\g}&:\g\longrightarrow\Hom(\bigwedge^{n-1}V,V).
\end{align*}
For arbitrary \(x_i\in\g\) and \(u_i\in V\), define
\begin{align}
\mathcal M_{\g}(x_1,u_1;\ldots;x_n,u_n)
={}&[x_1,\ldots,x_n]_{\g}
\notag\\
&+\sum_{i=1}^{n}(-1)^{i-1}\vartheta_V(u_i)
 (x_1,\ldots,\widehat{x_i},\ldots,x_n)
\notag\\
&+\sum_{i=1}^{n}(-1)^{n-i}\rho_V
 (u_1,\ldots,\widehat{u_i},\ldots,u_n)x_i,
\label{eq:matched-Mg}\\
\mathcal M_V(x_1,u_1;\ldots;x_n,u_n)
={}&[u_1,\ldots,u_n]_V
\notag\\
&+\sum_{i=1}^{n}(-1)^{n-i}\rho_{\g}
 (x_1,\ldots,\widehat{x_i},\ldots,x_n)u_i
\notag\\
&+\sum_{i=1}^{n}(-1)^{n-i}\vartheta_{\g}(x_i)
 (u_1,\ldots,\widehat{u_i},\ldots,u_n).
\label{eq:matched-MV}
\end{align}

\begin{definition}\label{def:matched-pair}
The quadruple
\((\rho_{\g},\rho_V,\vartheta_V,\vartheta_{\g})\) is an
\emph{extreme matched pair} if
\begin{equation}
 [x_1+u_1,\ldots,x_n+u_n]_{\bowprod}
 =\mathcal M_{\g}(x_1,u_1;\ldots;x_n,u_n)
  +\mathcal M_V(x_1,u_1;\ldots;x_n,u_n)
\label{eq:matched-bracket}
\end{equation}
is an \(n\)-Lie bracket.  The resulting \(n\)-Lie algebra is denoted by
\(\g\bowprod V\).
\end{definition}

The next statement gives the complete conditions without naming
homogeneous input types.

\begin{theorem}[Direct matched-pair criterion]
\label{thm:matched-direct}
The four actions form an extreme matched pair if and only if, for all
\[
x_1,\ldots,x_{n-1},y_1,\ldots,y_n\in\g,\qquad
u_1,\ldots,u_{n-1},v_1,\ldots,v_n\in V,
\]
the following two identities hold:
\begin{align}
&\mathcal M_{\g}\bigl(
 x_1,u_1;\ldots;x_{n-1},u_{n-1};
 \mathcal M_{\g}(y_1,v_1;\ldots;y_n,v_n),
 \mathcal M_V(y_1,v_1;\ldots;y_n,v_n)\bigr)
\notag\\
&\quad=\sum_{i=1}^{n}\mathcal M_{\g}\bigl(
 y_1,v_1;\ldots;y_{i-1},v_{i-1};
 \mathcal M_{\g}(x_1,u_1;\ldots;x_{n-1},u_{n-1};y_i,v_i),
\notag\\
&\hspace{20mm}
 \mathcal M_V(x_1,u_1;\ldots;x_{n-1},u_{n-1};y_i,v_i);
 y_{i+1},v_{i+1};\ldots;y_n,v_n\bigr),
\label{eq:matched-direct-g}\\
&\mathcal M_V\bigl(
 x_1,u_1;\ldots;x_{n-1},u_{n-1};
 \mathcal M_{\g}(y_1,v_1;\ldots;y_n,v_n),
 \mathcal M_V(y_1,v_1;\ldots;y_n,v_n)\bigr)
\notag\\
&\quad=\sum_{i=1}^{n}\mathcal M_V\bigl(
 y_1,v_1;\ldots;y_{i-1},v_{i-1};
 \mathcal M_{\g}(x_1,u_1;\ldots;x_{n-1},u_{n-1};y_i,v_i),
\notag\\
&\hspace{20mm}
 \mathcal M_V(x_1,u_1;\ldots;x_{n-1},u_{n-1};y_i,v_i);
 y_{i+1},v_{i+1};\ldots;y_n,v_n\bigr).
\label{eq:matched-direct-V}
\end{align}
\end{theorem}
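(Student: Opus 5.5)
The plan is to argue exactly as in the proof of \Cref{thm:reduced-unified-product}, of which this statement is a special case. I would first observe that the bracket \eqref{eq:matched-bracket} is the action-form bracket \eqref{eq:action-direct-expansion} for the datum
\[
\Omega=(\rho_{\g},\rho_V,\vartheta_V,\vartheta_{\g},0,[-,\ldots,-]_V),
\]
that is, with \(\omega=0\) and \(\{-,\ldots,-\}_V=[-,\ldots,-]_V\). Comparing \eqref{eq:matched-Mg}--\eqref{eq:matched-MV} with \eqref{eq:G-component}--\eqref{eq:H-component} term by term then gives \(\mathcal M_{\g}=G\) and \(\mathcal M_V=H\) for this datum. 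By \Cref{prop:direct-bracket}, the operation is alternating and multilinear, and it vanishes on the middle mixed degrees. So the only condition needed for it to be an \(n\)-Lie bracket is the Filippov identity \eqref{eq:FI}.

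Next, I would take \(z_a=x_a+u_a\) and \(w_i=y_i+v_i\) as in the statement. The inner brackets are
\[
[w_1,\ldots,w_n]_{\bowprod}=\mathcal M_{\g}(y;v)+\mathcal M_V(y;v),\qquad
[z_1,\ldots,z_{n-1},w_i]_{\bowprod}=\mathcal M_{\g}(x,y_i;u,v_i)+\mathcal M_V(x,y_i;u,v_i).
\]
Substituting these into the outer bracket expresses the left-hand side of \eqref{eq:FI} as \(\mathcal M_{\g}(\ldots)+\mathcal M_V(\ldots)\). Here the inner pair occupies the \(n\)-th slot and takes the role of \((G_0,H_0)\). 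The \(i\)-th summand on the right-hand side is handled in the same way, with the pair in the \(i\)-th slot. This step uses only the definition \eqref{eq:matched-bracket} applied to arbitrary elements of \(\g\oplus V\) written as \((\g\text{-part},V\text{-part})\).

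Finally, because the sum \(\g\oplus V\) is direct, the Filippov difference vanishes if and only if its \(\g\)-projection and its \(V\)-projection both vanish. These two projections are exactly \eqref{eq:matched-direct-g} and \eqref{eq:matched-direct-V}, which gives both directions of the equivalence at once.

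There is no real obstacle. The one point that needs care is to check that the substitution respects positions. The \(i\)-th entry of \(\mathcal M_{\g}\) and \(\mathcal M_V\) is fed the pair \((\mathcal M_{\g},\mathcal M_V)\) of the inner bracket, and the signs \((-1)^{i-1}\) and \((-1)^{n-i}\) were already fixed in \Cref{prop:direct-bracket}, so nothing new has to be computed. Alternatively, the result can be quoted directly as the specialization \(\omega=0\), \(\{\cdot\}_V=[\cdot]_V\) of \Cref{thm:reduced-unified-product}.
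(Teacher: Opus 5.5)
Your proposal is correct and follows essentially the same route as the paper: set $z_a=x_a+u_a$ and $w_i=y_i+v_i$, substitute the inner brackets as the pairs $(\mathcal M_{\g},\mathcal M_V)$ into the outer brackets on both sides of the Filippov identity, and compare the $\g$- and $V$-components in the direct sum. Your further observations are also correct: this is the specialization $\omega=0$, $\{-,\ldots,-\}_V=[-,\ldots,-]_V$ of \Cref{thm:reduced-unified-product}, and alternation of the bracket follows from \Cref{prop:direct-bracket}; the paper leaves both points implicit.
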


\begin{proof}
Put
\[
 z_a=x_a+u_a\quad(1\leq a\leq n-1),\qquad
 w_i=y_i+v_i\quad(1\leq i\leq n).
\]
The inner bracket on the left-hand side of \eqref{eq:FI} is
\begin{align*}
[w_1,\ldots,w_n]_{\bowprod}
&=\mathcal M_{\g}(y_1,v_1;\ldots;y_n,v_n)
 +\mathcal M_V(y_1,v_1;\ldots;y_n,v_n).
\end{align*}
Substitution into the outer bracket gives
\begin{align*}
&[z_1,\ldots,z_{n-1},[w_1,\ldots,w_n]_{\bowprod}]_{\bowprod}\\
&=\mathcal M_{\g}\bigl(
 x_1,u_1;\ldots;x_{n-1},u_{n-1};
\notag\\
&\hspace{16mm}\mathcal M_{\g}(y_1,v_1;\ldots;y_n,v_n),
 \mathcal M_V(y_1,v_1;\ldots;y_n,v_n)\bigr)\\
&\quad+\mathcal M_V\bigl(
 x_1,u_1;\ldots;x_{n-1},u_{n-1};
\notag\\
&\hspace{16mm}\mathcal M_{\g}(y_1,v_1;\ldots;y_n,v_n),
 \mathcal M_V(y_1,v_1;\ldots;y_n,v_n)\bigr).
\end{align*}
For \(1\leq i\leq n\), the inner bracket in the \(i\)-th summand is
\begin{align*}
[z_1,\ldots,z_{n-1},w_i]_{\bowprod}
&=\mathcal M_{\g}(x_1,u_1;\ldots;x_{n-1},u_{n-1};y_i,v_i)\\
&\quad+\mathcal M_V(x_1,u_1;\ldots;x_{n-1},u_{n-1};y_i,v_i).
\end{align*}
Consequently,
\begin{align*}
&[w_1,\ldots,w_{i-1},
 [z_1,\ldots,z_{n-1},w_i]_{\bowprod},
 w_{i+1},\ldots,w_n]_{\bowprod}\\
&=\mathcal M_{\g}\bigl(
 y_1,v_1;\ldots;y_{i-1},v_{i-1};
 \mathcal M_{\g}(x_1,u_1;\ldots;x_{n-1},u_{n-1};y_i,v_i),\\
&\hspace{32mm}
 \mathcal M_V(x_1,u_1;\ldots;x_{n-1},u_{n-1};y_i,v_i);
 y_{i+1},v_{i+1};\ldots;y_n,v_n\bigr)\\
&\quad+\mathcal M_V\bigl(
 y_1,v_1;\ldots;y_{i-1},v_{i-1};
 \mathcal M_{\g}(x_1,u_1;\ldots;x_{n-1},u_{n-1};y_i,v_i),\\
&\hspace{32mm}
 \mathcal M_V(x_1,u_1;\ldots;x_{n-1},u_{n-1};y_i,v_i);
 y_{i+1},v_{i+1};\ldots;y_n,v_n\bigr).
\end{align*}
Comparing the \(\g\)-components in \eqref{eq:FI} gives
\eqref{eq:matched-direct-g}; comparing the \(V\)-components gives
\eqref{eq:matched-direct-V}.  Since the chosen elements are arbitrary,
the two component identities are also sufficient for \eqref{eq:FI}.
\end{proof}

\begin{proposition}\label{prop:matched-representations}
For every extreme matched pair,
\(\rho_{\g}\) is a representation of \(\g\) on \(V\), and
\(\rho_V\) is a representation of \(V\) on \(\g\).
\end{proposition}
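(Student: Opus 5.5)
We extract the two representation identities from the Filippov identity of \(\g\bowprod V\) by specializing variables in \Cref{thm:matched-direct}, in the same way as in the proof of \Cref{prop:crossed-consequences}(i). For \(\rho_{\g}\) we take homogeneous tuples with exactly one \(V\)-entry and read off \(V\)-components. For \(\rho_V\) we take tuples with exactly one \(\g\)-entry and read off \(\g\)-components. No information about \(\vartheta_V,\vartheta_{\g}\) is needed, because these maps land in the complementary component.

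\emph{First identity \eqref{eq:rep1} for \(\rho_{\g}\).} Put \(u_a=0\), \(v_1=\cdots=v_{n-1}=0\), \(y_n=0\), and keep \(x_1,\ldots,x_{n-1},y_1,\ldots,y_{n-1}\) and \(v_n=v\). Write \(X=(x_1,\ldots,x_{n-1})\) and \(Y=(y_1,\ldots,y_{n-1})\). Then \(\mathcal M_V(y_1,0;\ldots;0,v)=\rho_{\g}(Y)v\), since the \(\vartheta_{\g}\) terms need \(n-1\ge 2\) \(V\)-arguments and vanish when \(n\geq3\). The \(V\)-component of the left side of \eqref{eq:matched-direct-V} is \(\rho_{\g}(X)\rho_{\g}(Y)v\). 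The \(\g\)-part \(\mathcal M_{\g}\) of the inner bracket may be nonzero (a \(\vartheta_V(v)\)-term), but it enters \(\mathcal M_V\) with all outer arguments in \(\g\), so it contributes nothing. On the right, the summands \(i\le n-1\) give \(\rho_{\g}(y_1,\ldots,[X,y_i]_{\g},\ldots,y_{n-1})v\), again because the extra \(\g\)-valued pieces only feed \(\mathcal M_V\) through \(\rho_{\g}\) with a \(V\)-argument equal to \(v\). The \(i=n\) summand gives \(\rho_{\g}(Y)\rho_{\g}(X)v\). Comparing the two sides yields \eqref{eq:rep1}.

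\emph{Second identity \eqref{eq:rep2} for \(\rho_{\g}\).} Take the outer entries \(x_1,\ldots,x_{n-2}\) together with \(u_{n-1}=u\), and the inner entries \(y_1,\ldots,y_n\in\g\). The left \(V\)-component is \(\pm\rho_{\g}(x_1,\ldots,x_{n-2},[y_1,\ldots,y_n]_{\g})u\). The \(i\)-th right summand gives \(\pm(-1)^{n-i}\rho_{\g}(y_1,\ldots,\widehat{y_i},\ldots,y_n)\rho_{\g}(x_1,\ldots,x_{n-2},y_i)u\). After cancelling the common sign coming from moving \(u\) to the last slot, as in the proof of \Cref{prop:crossed-consequences}(i), this is \eqref{eq:rep2}.

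\emph{Identities for \(\rho_V\).} Exchange the roles of \(\g\) and \(V\) and read the \(\g\)-component \eqref{eq:matched-direct-g}. This repeats the computation already carried out in the proof of \Cref{prop:crossed-consequences}(i): one uses the tuples \(u_1,\ldots,u_{n-1},v_1,\ldots,v_{n-1},x\) and \(u_1,\ldots,u_{n-2},x,v_1,\ldots,v_n\). The \(V\)-valued byproducts \(\vartheta_{\g}(x)(\cdots)\) enter the \(\g\)-component only through \(\rho_V\) evaluated on tuples containing \(x\), which vanish, or through \(\vartheta_V\) evaluated on \(n-1\) \(\g\)-arguments, of which at most one is present; for \(n\ge3\) these vanish.

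\emph{Main obstacle.} The main difficulty is verifying that every stray cross-term really vanishes. The key observation is that each map requires a specific count of \(\g\)- and \(V\)-arguments, and in each specialization the available counts rule out all terms except the displayed ones. The sign bookkeeping when the distinguished variable is moved to the last position is tedious but routine. For \(n=2\) the same argument works directly, since \(\vartheta\) and \(\rho\) coincide up to sign in degree one.
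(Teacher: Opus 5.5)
For $n\geq 3$ your proof is the same as the paper's. You use the same four specializations of the Filippov identity in $\g\bowprod V$: outer $X$ against $Y,u$; outer $x_1,\ldots,x_{n-2},u$ against $y_1,\ldots,y_n$; and the mirror tuples $U,W,x$ and $u_1,\ldots,u_{n-2},x$ against $v_1,\ldots,v_n$. You read off the $V$- resp.\ $\g$-component, and your term-by-term checks are correct.

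One justification is misstated, and it is exactly what breaks at $n=2$. The $\vartheta$'s do not ``land in the complementary component'': $\vartheta_{\g}$ is $V$-valued like $\rho_{\g}$, and $\vartheta_V$ is $\g$-valued like $\rho_V$. They drop out only because each needs $n-1\geq 2$ arguments from the other factor, while at most one of those is nonzero. Likewise, the $\vartheta_{\g}(x)(\cdots)$ byproducts in the $\rho_V$ computations vanish for two reasons. The $\rho_V$-terms containing them act on a zero $\g$-slot, and the $\vartheta_V$-terms containing them have all $\g$-arguments zero.

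Your last sentence is wrong: the statement itself fails for $n=2$, which the paper's standing assumption $n\geq2$ allows. At $n=2$, write $\vartheta_V(u)x$ for $\vartheta_V(u)(x)$ and $\vartheta_{\g}(x)u$ for $\vartheta_{\g}(x)(u)$. Expanding \eqref{eq:matched-Mg}--\eqref{eq:matched-MV} then gives
\begin{align*}
\mathcal M_{\g}&=[x_1,x_2]_{\g}+(\rho_V+\vartheta_V)(u_1)x_2-(\rho_V+\vartheta_V)(u_2)x_1,\\
\mathcal M_V&=[u_1,u_2]_V+(\rho_{\g}-\vartheta_{\g})(x_1)u_2-(\rho_{\g}-\vartheta_{\g})(x_2)u_1.
\end{align*}
So the $\vartheta$'s sit in the same degree as the $\rho$'s, and the Filippov identity constrains only $\rho_{\g}-\vartheta_{\g}$ and $\rho_V+\vartheta_V$.

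For a counterexample, take $\g=\operatorname{span}\{a,b\}$ with $[a,b]=b$, $V=K$ abelian, and $\rho_V=\vartheta_V=0$. Put $\vartheta_{\g}=\rho_{\g}$ with $\rho_{\g}(a)=0$ and $\rho_{\g}(b)=\id_K$. The bracket is that of the direct product $\g\times K$, so this is an extreme matched pair. Yet $[\rho_{\g}(a),\rho_{\g}(b)]=0\neq\id_K=\rho_{\g}([a,b])$.

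The identification you allude to does not rescue the claim. Imposing $\vartheta_V=\rho_V$ and $\vartheta_{\g}=-\rho_{\g}$ (what consistency of \eqref{eq:def-rho-actions}--\eqref{eq:def-vartheta-actions} would force at $n=2$) makes the bracket see $2\rho_V$ and $2\rho_{\g}$. But $2\rho$ being a representation does not make $\rho$ one: take $\rho_{\g}=\tfrac12\ad$ acting on an abelian copy of a non-abelian $\g$.

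So drop the $n=2$ claim and state $n\geq3$. The paper's proof tacitly needs the same hypothesis, since already its formula $\operatorname{pr}_V[X,[Y,u]_{\bowprod}]_{\bowprod}=\rho_{\g}(X)\rho_{\g}(Y)u$ is false at $n=2$.
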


\begin{proof}
Let \(x_1,\ldots,x_n,y_1,\ldots,y_{n-1}\in\g\),
\(u\in V\), and put
\(X=(x_1,\ldots,x_{n-1})\),
\(Y=(y_1,\ldots,y_{n-1})\).  The \(V\)-components of
\eqref{eq:FI} on \(X,Y,u\) are
\begin{align*}
\operatorname{pr}_V[X,[Y,u]_{\bowprod}]_{\bowprod}
 &=\rho_{\g}(X)\rho_{\g}(Y)u,\\
\operatorname{pr}_V[Y,[X,u]_{\bowprod}]_{\bowprod}
 &=\rho_{\g}(Y)\rho_{\g}(X)u,\\
\operatorname{pr}_V\sum_{i=1}^{n-1}
[y_1,\ldots,[X,y_i]_{\g},\ldots,y_{n-1},u]_{\bowprod}
 &=\sum_{i=1}^{n-1}
\rho_{\g}(y_1,\ldots,[X,y_i]_{\g},\ldots,y_{n-1})u.
\end{align*}
Thus their equality is \eqref{eq:rep1} for \(\rho_{\g}\).  To obtain
\eqref{eq:rep2}, put \(x_1,\ldots,x_{n-2},u\) in the outer block and
\(y_1,\ldots,y_n\) in the inner block of \eqref{eq:FI}.  The two
\(V\)-components are
\begin{align*}
&-\rho_{\g}(x_1,\ldots,x_{n-2},[y_1,\ldots,y_n]_{\g})u,\\
&-\sum_{i=1}^{n}(-1)^{n-i}
\rho_{\g}(y_1,\ldots,\widehat{y_i},\ldots,y_n)
\rho_{\g}(x_1,\ldots,x_{n-2},y_i)u.
\end{align*}
Their equality, after cancelling the minus sign, is
\eqref{eq:rep2} for \(\rho_{\g}\).

Now take \(u_1,\ldots,u_n,v_1,\ldots,v_{n-1}\in V\) and \(x\in\g\).
Put \(U=(u_1,\ldots,u_{n-1})\) and
\(W=(v_1,\ldots,v_{n-1})\).  The \(\g\)-components of
\eqref{eq:FI} on \(U,W,x\) are
\begin{align*}
\operatorname{pr}_{\g}[U,[W,x]_{\bowprod}]_{\bowprod}
 &=\rho_V(U)\rho_V(W)x,\\
\operatorname{pr}_{\g}[W,[U,x]_{\bowprod}]_{\bowprod}
 &=\rho_V(W)\rho_V(U)x,\\
\operatorname{pr}_{\g}\sum_{i=1}^{n-1}
[v_1,\ldots,[U,v_i]_V,\ldots,v_{n-1},x]_{\bowprod}
 &=\sum_{i=1}^{n-1}
\rho_V(v_1,\ldots,[U,v_i]_V,\ldots,v_{n-1})x.
\end{align*}
This is \eqref{eq:rep1} for \(\rho_V\).  Finally, put
\(u_1,\ldots,u_{n-2},x\) in the outer block and
\(v_1,\ldots,v_n\) in the inner block of \eqref{eq:FI}.  The
\(\g\)-components give
\begin{align*}
&-\rho_V(u_1,\ldots,u_{n-2},[v_1,\ldots,v_n]_V)x\\
&\quad=-\sum_{i=1}^{n}(-1)^{n-i}
\rho_V(v_1,\ldots,\widehat{v_i},\ldots,v_n)
\rho_V(u_1,\ldots,u_{n-2},v_i)x.
\end{align*}
After cancellation, this is \eqref{eq:rep2} for \(\rho_V\).  Hence both maps are
representations.
\end{proof}

\begin{definition}\label{def:extreme-factorization}
An \(n\)-Lie algebra \(E\) \emph{factorizes extremely through}
\(\g\) and \(V\) if \(\g,V\subseteq E\) are \(n\)-Lie subalgebras,
\(E=\g\oplus V\) as vector spaces, and
\[
 [u_1,\ldots,u_k,x_{k+1},\ldots,x_n]_E=0
 \quad(2\leq k\leq n-2).
\]
\end{definition}

\begin{theorem}[Factorization theorem]\label{thm:factorization}
An \(n\)-Lie algebra \(E\) factorizes extremely through \(\g\) and
\(V\) if and only if there is an extreme matched pair such that
\(E\cong\g\bowprod V\), by an isomorphism which is the identity on both
factors.
\end{theorem}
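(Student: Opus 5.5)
Both directions are reduced to the realization theorem (\Cref{thm:realization}) and the observation that an extreme matched pair is exactly an action datum with $\omega=0$ and $\{-,\ldots,-\}_V=[-,\ldots,-]_V$ an $n$-Lie bracket.

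For the forward direction, let $E$ factorize extremely through $\g$ and $V$. Since $\g$ is a subalgebra and the middle mixed brackets vanish by \Cref{def:extreme-factorization}, the bracket of $E$ is an extreme extending structure in the sense of \Cref{def:extreme-structure}. \Cref{thm:realization} then extracts a datum $\Omega$ via \eqref{eq:extract-thetaV}--\eqref{eq:extract-Vbracket}, and the identity of $\g\oplus V$ identifies $[-,\ldots,-]_E$ with $[-,\ldots,-]_\Omega$. Because $V$ is also a subalgebra, $[u_1,\ldots,u_n]_E\in V$. Hence $\omega=\pr_{\g}[u_1,\ldots,u_n]_E=0$ and $\{u_1,\ldots,u_n\}_V=[u_1,\ldots,u_n]_V$. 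Comparing \eqref{eq:G-component}--\eqref{eq:H-component} with \eqref{eq:matched-Mg}--\eqref{eq:matched-MV} term by term, with $\omega=0$, gives $G=\mathcal M_{\g}$ and $H=\mathcal M_V$. Thus $[-,\ldots,-]_E=[-,\ldots,-]_{\bowprod}$ for the quadruple $(\rho_{\g},\rho_V,\vartheta_V,\vartheta_{\g})$. Since $E$ is $n$-Lie, this bracket satisfies Filippov, so the quadruple is an extreme matched pair by \Cref{def:matched-pair}. The isomorphism $\g\bowprod V\to E$ is the identity of $\g\oplus V$, hence the identity on both factors.

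For the converse, let $\phi:\g\bowprod V\to E$ be an isomorphism restricting to the identity on $\g$ and on $V$. The images $\phi(\g)=\g$ and $\phi(V)=V$ are subalgebras of $E$ provided they are subalgebras of $\g\bowprod V$, and $E=\g\oplus V$. In $\g\bowprod V$, setting all $x_i=0$ in \eqref{eq:matched-bracket} kills every $\vartheta$, $\rho$ and $\g$-bracket term, leaving $[u_1,\ldots,u_n]_V$; so $V$ is a subalgebra with its own bracket. Setting all $u_i=0$ leaves $[x_1,\ldots,x_n]_{\g}$, so $\g$ is a subalgebra as well. A tuple with $k$ entries from $V$, $2\le k\le n-2$, has zero bracket, because \eqref{eq:matched-Mg}--\eqref{eq:matched-MV} contain only terms with $0$, $1$, $n-1$ or $n$ entries from $V$. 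This is the same multilinear expansion used in the proof of \Cref{prop:direct-bracket}. Transporting through $\phi$, which is the identity on both summands, gives the sparsity condition in $E$.

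The main obstacle is bookkeeping rather than substance: one must make the interpretation of "identity on both factors" explicit, namely that $\g$ and $V$ are identified with the given subspaces of $E$ so that $\phi$ is literally $x+u\mapsto x+u$. One must also verify the sign conventions in $\mathcal M_{\g}$ and $\mathcal M_V$ against the extraction formulas. These signs agree because $\mathcal M_{\g}$ and $\mathcal M_V$ are obtained from $G$ and $H$ simply by deleting $\omega$. I would first display this specialization ($\omega=0$, $\{\,\}_V=[\,]_V$ turns \eqref{eq:action-direct-expansion} into \eqref{eq:matched-bracket}) as a one-line remark, so that both directions become direct citations of \Cref{thm:realization}.
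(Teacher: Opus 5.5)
Your proof is correct and follows essentially the paper's route. The paper also extracts the four actions by the projections $\operatorname{pr}_{\g},\operatorname{pr}_V$ and uses the vanishing of the middle degrees to identify $[-,\ldots,-]_E$ with $\mathcal M_{\g}+\mathcal M_V$; it simply redoes that multilinear expansion directly, where you cite the realization theorem with $\omega=0$ and $\{-,\ldots,-\}_V=[-,\ldots,-]_V$. Your converse explicitly checks that $\g$ and $V$ are subalgebras of $\g\bowprod V$ and that brackets with $2\le k\le n-2$ entries from $V$ vanish, then transports this along the isomorphism, which makes it more complete than the paper's one-line remark.
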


\begin{proof}
Assume first that \(E=\g\oplus V\) is an extreme factorization.  For
\(x,x_i\in\g\) and \(u,u_i\in V\), use the two projections to define
\[
E=\g\oplus V,\qquad
\operatorname{pr}_{\g}+\operatorname{pr}_V=\id_E.
\]
\begin{align*}
\vartheta_V(u)(x_2,\ldots,x_n)
 &=\operatorname{pr}_{\g}[u,x_2,\ldots,x_n]_E,\\
\rho_{\g}(x_2,\ldots,x_n)u
 &=(-1)^{n-1}\operatorname{pr}_V[u,x_2,\ldots,x_n]_E,\\
\rho_V(u_1,\ldots,u_{n-1})x
 &=\operatorname{pr}_{\g}[u_1,\ldots,u_{n-1},x]_E,\\
\vartheta_{\g}(x)(u_1,\ldots,u_{n-1})
 &=\operatorname{pr}_V[u_1,\ldots,u_{n-1},x]_E.
\end{align*}
Now take arbitrary \(x_1+u_1,\ldots,x_n+u_n\in E\).
Multilinearity, together with the vanishing of every middle mixed
degree, gives
\begin{align*}
&[x_1+u_1,\ldots,x_n+u_n]_E\\
&=[x_1,\ldots,x_n]_{\g}+[u_1,\ldots,u_n]_V\\
&\quad+\sum_{i=1}^{n}(-1)^{i-1}\vartheta_V(u_i)
 (x_1,\ldots,\widehat{x_i},\ldots,x_n)\\
&\quad+\sum_{i=1}^{n}(-1)^{n-i}\rho_V
 (u_1,\ldots,\widehat{u_i},\ldots,u_n)x_i\\
&\quad+\sum_{i=1}^{n}(-1)^{n-i}\rho_{\g}
 (x_1,\ldots,\widehat{x_i},\ldots,x_n)u_i\\
&\quad+\sum_{i=1}^{n}(-1)^{n-i}\vartheta_{\g}(x_i)
 (u_1,\ldots,\widehat{u_i},\ldots,u_n)\\
&=\mathcal M_{\g}(x_1,u_1;\ldots;x_n,u_n)
 +\mathcal M_V(x_1,u_1;\ldots;x_n,u_n).
\end{align*}
Applying \eqref{eq:FI} to arbitrary outer elements
\(x_1+u_1,\ldots,x_{n-1}+u_{n-1}\) and inner elements
\(y_1+v_1,\ldots,y_n+v_n\) shows, by
\Cref{thm:matched-direct}, that both
\eqref{eq:matched-direct-g} and \eqref{eq:matched-direct-V} hold.
Therefore the four maps form a matched pair, and the identity on
\(\g\oplus V\) gives \(E\cong\g\bowprod V\).
Conversely, if the defining bracket of \(\g\bowprod V\) evaluated on both summands are subalgebras and their vector-space sum is direct, then the middle mixed brackets vanish by definition.
\end{proof}

\begin{example}[Matched pair with four nonzero actions]
\label{ex:examples-matched}
Let
\[
 \g=\operatorname{span}\{x_1,\ldots,x_n,z\},
 \qquad
 V=\operatorname{span}\{u_0,u_1,\ldots,u_n\},
\]
with
\begin{equation}
 [x_1,\ldots,x_{n-1},x_n]_{\g}=x_n,\qquad
 [u_1,\ldots,u_n]_V=u_n.
\label{eq:examples-matched-pure}
\end{equation}
Define
\begin{align}
 \rho_{\g}(X_0)u_0&=u_0,
&
 \vartheta_V(u_0)(X_0)&=(-1)^{n-1}x_n,
\label{eq:examples-matched-first}\\
 \rho_V(U_0)z&=z,
&
 \vartheta_{\g}(z)(U_0)&=u_n,
\label{eq:examples-matched-second}
\end{align}
and let every unlisted value be zero.  Then
\((\rho_{\g},\rho_V,\vartheta_V,\vartheta_{\g})\) is an extreme
matched pair, and \(E=\g\bowprod V\) has nonzero brackets
\begin{align}
 [x_1,\ldots,x_{n-1},x_n]&=x_n,
\label{eq:examples-matched-final-1}\\
 [u_1,\ldots,u_{n-1},u_n]&=u_n,
\label{eq:examples-matched-final-2}\\
 [x_1,\ldots,x_{n-1},u_0]&=x_n+u_0,
\label{eq:examples-matched-final-3}\\
 [u_1,\ldots,u_{n-1},z]&=z+u_n.
\label{eq:examples-matched-final-4}
\end{align}
\end{example}

\section{Complements and deformation maps}
\label{sec:complements}

Fix an extreme matched pair and write
\(E=\g\bowprod V\).  A subalgebra \(\overline V\subseteq E\) is a
\emph{complement of \(\g\)} if \(E=\g\oplus\overline V\) as vector
spaces.
The graph method below generalizes the deformation-map description of
complements for \(3\)-Lie algebras in~\cite{ZhangExtending}.

For a linear map \(r:V\to\g\), direct substitution of
\(x_i=r(u_i)\) into \eqref{eq:matched-Mg}--\eqref{eq:matched-MV} gives
\begin{align}
G_r(u_1,\ldots,u_n)
={}&[r(u_1),\ldots,r(u_n)]_{\g}
\notag\\
&+\sum_{i=1}^{n}(-1)^{i-1}\vartheta_V(u_i)
 (r(u_1),\ldots,\widehat{r(u_i)},\ldots,r(u_n))
\notag\\
&+\sum_{i=1}^{n}(-1)^{n-i}\rho_V
 (u_1,\ldots,\widehat{u_i},\ldots,u_n)r(u_i),
\label{eq:deformation-Gr}\\
H_r(u_1,\ldots,u_n)
={}&[u_1,\ldots,u_n]_V
\notag\\
&+\sum_{i=1}^{n}(-1)^{n-i}\rho_{\g}
 (r(u_1),\ldots,\widehat{r(u_i)},\ldots,r(u_n))u_i
\notag\\
&+\sum_{i=1}^{n}(-1)^{n-i}\vartheta_{\g}(r(u_i))
 (u_1,\ldots,\widehat{u_i},\ldots,u_n).
\label{eq:deformation-Hr}
\end{align}

\begin{definition}\label{def:deformation-map}
A linear map \(r:V\to\g\) is a \emph{deformation map} of the matched
pair if
\begin{equation}
 G_r(u_1,\ldots,u_n)=r(H_r(u_1,\ldots,u_n))
\label{eq:deformation-equation}
\end{equation}
for all \(u_1,\ldots,u_n\in V\).
\end{definition}

\begin{theorem}[Graph criterion]\label{thm:graph-criterion}
The graph
\[
 \operatorname{Gr}(r)=\{r(u)+u\mid u\in V\}
\]
is an \(n\)-Lie subalgebra of \(\g\bowprod V\) if and only if \(r\) is
a deformation map.  In that case
\begin{equation}
 [u_1,\ldots,u_n]_r:=H_r(u_1,\ldots,u_n)
\label{eq:deformed-bracket}
\end{equation}
is an \(n\)-Lie bracket on \(V\), and
\[
 V_r\longrightarrow\operatorname{Gr}(r),
 \qquad u\longmapsto r(u)+u
\]
is an \(n\)-Lie algebra isomorphism.
\end{theorem}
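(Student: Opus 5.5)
The plan is to compute the bracket of $n$ graph elements directly in $\g\bowprod V$, and to read off both the subalgebra condition and the deformed bracket from its two direct-sum components. For $u_1,\ldots,u_n\in V$, substitute $x_i=r(u_i)$ into \eqref{eq:matched-bracket}. By the definitions \eqref{eq:matched-Mg}--\eqref{eq:matched-MV} and \eqref{eq:deformation-Gr}--\eqref{eq:deformation-Hr}, this gives
\[
 [r(u_1)+u_1,\ldots,r(u_n)+u_n]_{\bowprod}
 =G_r(u_1,\ldots,u_n)+H_r(u_1,\ldots,u_n),
\]
with $G_r\in\g$ and $H_r\in V$. No separate multilinear expansion is needed, since $\mathcal M_{\g}$ and $\mathcal M_V$ are already the full components evaluated on arbitrary elements $x_i+u_i$.

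Next I characterize membership in the graph. An element $x+v\in\g\oplus V$ lies in $\operatorname{Gr}(r)$ if and only if $x=r(v)$, because the $V$-component determines the only possible preimage. Since $\operatorname{Gr}(r)$ is a linear subspace spanned by the elements $r(u)+u$ and the bracket is multilinear, $\operatorname{Gr}(r)$ is closed under the bracket exactly when every $G_r+H_r$ above lies in it. That condition is $G_r(u_1,\ldots,u_n)=r(H_r(u_1,\ldots,u_n))$, which is precisely \eqref{eq:deformation-equation}. This settles the equivalence.

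For the second assertion, let $\phi:V\to\operatorname{Gr}(r)$ be $\phi(u)=r(u)+u$. It is a linear bijection, with inverse the restriction of $\pr_V$. When $r$ is a deformation map, the displayed computation reads
\[
 [\phi(u_1),\ldots,\phi(u_n)]_{\bowprod}=r(H_r)+H_r=\phi([u_1,\ldots,u_n]_r),
\]
so $[-,\ldots,-]_r$ is the transport of the restricted bracket on $\operatorname{Gr}(r)$ along $\phi^{-1}$. A subalgebra of an $n$-Lie algebra is itself an $n$-Lie algebra. Transport of structure along a linear isomorphism therefore gives that $[-,\ldots,-]_r$ is alternating and satisfies \eqref{eq:FI}, with $\phi$ an isomorphism $V_r\cong\operatorname{Gr}(r)$. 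Alternation of $H_r$ can also be seen directly from the alternation of $\rho_{\g}$, $\vartheta_{\g}$ and $[-,\ldots,-]_V$.

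There is no real obstacle here; the only point needing care is the sign bookkeeping. One must check that substituting $x_i=r(u_i)$ into \eqref{eq:matched-Mg}--\eqref{eq:matched-MV} reproduces exactly the signs $(-1)^{i-1}$ and $(-1)^{n-i}$ written in \eqref{eq:deformation-Gr}--\eqref{eq:deformation-Hr}. Since those formulas are obtained by literal substitution, this check is immediate.
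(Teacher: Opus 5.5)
Your proposal is correct and follows the paper's proof essentially step for step: you substitute $x_i=r(u_i)$ to get $G_r+H_r$, test graph membership by the condition $a=r(b)$, and check that $u\mapsto r(u)+u$ preserves the bracket. Your transport-of-structure step is exactly the paper's final argument (applying \eqref{eq:FI} inside $\operatorname{Gr}(r)$ and cancelling the injective map), with the small bonus that you also make the alternation of $[-,\ldots,-]_r$ explicit.
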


\begin{proof}
Take \(u_1,\ldots,u_n\in V\).  Substitution of
\(x_i=r(u_i)\) into \eqref{eq:matched-bracket} gives
\begin{align*}
&[r(u_1)+u_1,\ldots,r(u_n)+u_n]_{\bowprod}\\
&=G_r(u_1,\ldots,u_n)+H_r(u_1,\ldots,u_n).
\end{align*}
An element \(a+b\in\g\oplus V\) belongs to the graph precisely when
\(a=r(b)\), where \(a\in\g\) and \(b\in V\).  Hence the displayed
bracket belongs to \(\operatorname{Gr}(r)\) exactly when
\[
G_r(u_1,\ldots,u_n)=r(H_r(u_1,\ldots,u_n)).
\]
This proves the graph criterion.  If it holds, define
\[
\iota_r:V\longrightarrow\operatorname{Gr}(r),\qquad
\iota_r(u)=r(u)+u,\qquad
\iota_r^{-1}=\operatorname{pr}_V|_{\operatorname{Gr}(r)}.
\]
Then, on arbitrary \(u_1,\ldots,u_n\in V\),
\[
\begin{aligned}
[\iota_r(u_1),\ldots,\iota_r(u_n)]_{\bowprod}
&=r(H_r(u_1,\ldots,u_n))+H_r(u_1,\ldots,u_n)\\
&=\iota_r(H_r(u_1,\ldots,u_n))
=\iota_r([u_1,\ldots,u_n]_r),
\end{aligned}
\]
so \(\iota_r\) preserves the bracket.  Finally, apply the Filippov
identity in \(\operatorname{Gr}(r)\) to the outer entries
\(\iota_r(u_1),\ldots,\iota_r(u_{n-1})\) and the inner entries
\(\iota_r(v_1),\ldots,\iota_r(v_n)\).  It gives
\begin{align*}
\iota_r\!\Biggl(&
 [u_1,\ldots,u_{n-1},[v_1,\ldots,v_n]_r]_r\\
&-\sum_{i=1}^{n}[v_1,\ldots,v_{i-1},
 [u_1,\ldots,u_{n-1},v_i]_r,
 v_{i+1},\ldots,v_n]_r\Biggr)=0.
\end{align*}
Since \(\iota_r\) is injective, the expression inside the parentheses
vanishes.  This is the Filippov identity \eqref{eq:FI} for \(V_r\).
\end{proof}

\begin{theorem}[Classification of complements]
\label{thm:complement-classification}
The assignment
\[
 r\longmapsto\operatorname{Gr}(r)
\]
is a bijection from deformation maps to complements of \(\g\) in
\(\g\bowprod V\).  Two complements \(\operatorname{Gr}(r)\) and
\(\operatorname{Gr}(r')\) are isomorphic as \(n\)-Lie algebras if and
only if there is a linear isomorphism \(\sigma:V\to V\) satisfying
\begin{equation}
 \sigma(H_r(u_1,\ldots,u_n))
 =H_{r'}(\sigma(u_1),\ldots,\sigma(u_n))
\label{eq:deformation-equivalence}
\end{equation}
for all \(u_1,\ldots,u_n\in V\).
\end{theorem}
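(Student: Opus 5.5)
The proof has two halves: the bijection between deformation maps and complements, and the isomorphism criterion.

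\emph{Bijection.} By \Cref{thm:graph-criterion}, $\operatorname{Gr}(r)$ is a subalgebra exactly when $r$ is a deformation map. It is always a vector-space complement of $\g$: any $x+u$ equals $(x-r(u))+(r(u)+u)$, and $\g\cap\operatorname{Gr}(r)=0$ because $r(u)+u\in\g$ forces $u=0$. Hence $r\mapsto\operatorname{Gr}(r)$ lands in complements, and it is injective since $r$ is recovered as $\pr_{\g}\circ(\pr_V|_{\operatorname{Gr}(r)})^{-1}$. For surjectivity, let $\overline V$ be a complement. Since $E=\g\oplus\overline V$, the restriction $\pr_V|_{\overline V}:\overline V\to V$ is a linear isomorphism:
\begin{itemize}
\item it is injective, because its kernel is $\overline V\cap\g=0$;
\item it is surjective, because each $u\in V$ decomposes as $u=x+\bar v$ with $x\in\g$, $\bar v\in\overline V$, and then $\pr_V(\bar v)=u$.
\end{itemize}
Set $r:=\pr_{\g}\circ(\pr_V|_{\overline V})^{-1}$. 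Then $\overline V=\operatorname{Gr}(r)$, and because $\overline V$ is a subalgebra, \Cref{thm:graph-criterion} shows that $r$ is a deformation map.

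\emph{Isomorphism criterion.} By \Cref{thm:graph-criterion}, $\iota_r:V_r\to\operatorname{Gr}(r)$ and $\iota_{r'}:V_{r'}\to\operatorname{Gr}(r')$ are $n$-Lie isomorphisms, where $V_r$ carries the bracket $H_r$. So $\operatorname{Gr}(r)\cong\operatorname{Gr}(r')$ if and only if $V_r\cong V_{r'}$. An isomorphism $V_r\to V_{r'}$ is exactly a linear bijection $\sigma$ with $\sigma([u_1,\dots,u_n]_r)=[\sigma u_1,\dots,\sigma u_n]_{r'}$, which is \eqref{eq:deformation-equivalence}. Explicitly:
\begin{itemize}
\item given $\sigma$ satisfying \eqref{eq:deformation-equivalence}, the composite $\iota_{r'}\sigma\iota_r^{-1}$ is an isomorphism $\operatorname{Gr}(r)\to\operatorname{Gr}(r')$;
\item given an isomorphism $\Psi:\operatorname{Gr}(r)\to\operatorname{Gr}(r')$, the map $\sigma=\iota_{r'}^{-1}\Psi\iota_r$ satisfies \eqref{eq:deformation-equivalence}.
\end{itemize}

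No step is a real obstacle. The only point needing care is that $\pr_V|_{\overline V}$ is bijective, and that the recovered $r$ is well defined and satisfies the deformation equation \eqref{eq:deformation-equation}; the latter comes for free from the graph criterion.
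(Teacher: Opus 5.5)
Your proposal is correct and follows essentially the same route as the paper's proof. Both identify complements with graphs through the linear isomorphism $\pr_V|_{\overline V}$, use the graph criterion for the subalgebra condition, and transport isomorphisms through $\iota_r$ and $\iota_{r'}$ to obtain $\sigma$.
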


\begin{proof}
For a linear map \(r:V\to\g\), projection onto \(V\) gives
\[
\operatorname{pr}_V|_{\operatorname{Gr}(r)}
:\operatorname{Gr}(r)\xrightarrow{\sim}V,
\qquad
\operatorname{Gr}(r)\cap\g=0,\qquad
\g+\operatorname{Gr}(r)=\g\oplus V.
\]
Conversely, if \(\overline V\) is a complement, then
\(\operatorname{pr}_V|_{\overline V}:\overline V\to V\) is injective
because \(\overline V\cap\g=0\), and it is surjective because
\(\g+\overline V=\g\oplus V\).  Hence it is a linear isomorphism.
For each \(u\in V\), write the unique inverse image as \(r(u)+u\).
Then
\[
\left(\operatorname{pr}_V|_{\overline V}\right)^{-1}(u)
=r(u)+u,\qquad
r=\operatorname{pr}_{\g}\circ
\left(\operatorname{pr}_V|_{\overline V}\right)^{-1},
\qquad
\overline V=\operatorname{Gr}(r).
\]
For arbitrary \(u_1,\ldots,u_n\in V\), the elementwise graph
calculation of \Cref{thm:graph-criterion} shows that
\(\overline V\) is a subalgebra precisely when
\[
[r(u_1)+u_1,\ldots,r(u_n)+u_n]\in\operatorname{Gr}(r),
\]
and the latter condition is exactly \eqref{eq:deformation-equation}.
Thus
\[
\{r\in\Hom(V,\g)\mid G_r=rH_r\}
\xrightarrow[\ \sim\ ]{\,r\mapsto\operatorname{Gr}(r)\,}
\{\overline V\leq\g\bowprod V\mid
 \g\oplus\overline V=\g\bowprod V\}.
\]
For two deformation maps \(r,r'\), the graph isomorphisms are
\[
\iota_r:(V,H_r)\xrightarrow{\sim}\operatorname{Gr}(r),\qquad
\iota_{r'}:(V,H_{r'})\xrightarrow{\sim}\operatorname{Gr}(r').
\]
Conjugating an isomorphism between the two complements by
\(\iota_r\) and \(\iota_{r'}\) gives a linear isomorphism
\(\sigma:V\to V\).  It preserves the brackets on arbitrary
\(u_1,\ldots,u_n\in V\) exactly when
\[
\sigma(H_r(u_1,\ldots,u_n))
=H_{r'}(\sigma(u_1),\ldots,\sigma(u_n)).
\]
This is \eqref{eq:deformation-equivalence}.  Conversely, any
\(\sigma\in\GL(V)\) satisfying this equation yields the required
isomorphism \(\iota_{r'}\sigma\iota_r^{-1}\).
\end{proof}

The graph criterion of \Cref{thm:graph-criterion} can change the
isomorphism type of a complement even in a semidirect product.  The
following family generalizes the ternary complement examples in
\cite{ZhangExtending}.

\begin{example}[Two non-isomorphic complements]
\label{ex:examples-complements}
Let
\[
 \g=\operatorname{span}\{x_1,\ldots,x_n\},\qquad
 [x_1,\ldots,x_n]_{\g}=x_n,
\]
and let \(V=\operatorname{span}\{u_1,\ldots,u_n\}\) be abelian.
Define
\(\rho_{\g}:\bigwedge^{n-1}\g\to\operatorname{End}(V)\) by
\begin{equation}
 \rho_{\g}(x_1,\ldots,x_{n-1})u_n=u_n
\label{eq:examples-complement-representation}
\end{equation}
and set every other basis value equal to zero.  The semidirect product
\(E=\g\ltimes_{\rho_{\g}}V\) has only the two nonzero brackets
\begin{equation}
 [x_1,\ldots,x_{n-1},x_n]=x_n,\qquad
 [x_1,\ldots,x_{n-1},u_n]=u_n.
\label{eq:examples-complement-E}
\end{equation}
Define \(r:V\to\g\) by
\begin{equation}
 r(u_i)=x_i\quad(1\leq i\leq n-1),\qquad r(u_n)=0.
\label{eq:examples-deformation-map}
\end{equation}
Then \(r\) is a deformation map.  Its graph is a non-abelian
complement of \(\g\), and the transported bracket on \(V\) is
\begin{equation}
 [u_1,\ldots,u_n]_r=u_n.
\label{eq:examples-deformed-V}
\end{equation}
Thus the original complement \(V\) and
\(\operatorname{Gr}(r)\) are not isomorphic.
\end{example}

\begin{proof}
For the deformation map, only \(\rho_{\g}\) is nonzero, so for
arbitrary \(v_1,\ldots,v_n\in V\),
\[
\vartheta_V=\rho_V=\vartheta_{\g}=0,\qquad [-,\ldots,-]_V=0,
\]
\begin{equation}
\begin{aligned}
H_r(v_1,\ldots,v_n)
&=\sum_{i=1}^{n}(-1)^{n-i}
 \rho_{\g}(r(v_1),\ldots,\widehat{r(v_i)},\ldots,r(v_n))v_i.
\end{aligned}
\label{eq:examples-semidir-deformation}
\end{equation}
Evaluate these components on the basis generator of
\(\bigwedge^nV\).  Using \(r(u_n)=0\), we get
\[
\begin{aligned}
H_r(u_1,\ldots,u_n)
&=\sum_{i=1}^{n-1}(-1)^{n-i}
 \rho_{\g}(x_1,\ldots,\widehat{x_i},\ldots,x_{n-1},0)u_i\\
&\quad+\rho_{\g}(x_1,\ldots,x_{n-1})u_n=u_n,
\end{aligned}
\]
\[
G_r(u_1,\ldots,u_n)
=[x_1,\ldots,x_{n-1},0]_{\g}=0
=r(u_n)=r(H_r(u_1,\ldots,u_n)).
\]
Since \(\bigwedge^nV\) is one-dimensional and
\[
\bigwedge\nolimits^nV^*
= K \,(u_1^*\wedge\cdots\wedge u_n^*),
\]
this single calculation proves \(G_r=rH_r\) for all
\(v_1,\ldots,v_n\in V\).
\[
G_r=rH_r,\qquad
[u_1,\ldots,u_n]_r=H_r(u_1,\ldots,u_n)=u_n.
\]
The corresponding graph bracket is visible directly in \(E\):
\[
\begin{aligned}
&[x_1+u_1,\ldots,x_{n-1}+u_{n-1},u_n]_E\\
&=[x_1,\ldots,x_{n-1},u_n]_E
=u_n=r(u_n)+u_n\in\operatorname{Gr}(r).
\end{aligned}
\]
Thus \(V\) is abelian whereas \(\operatorname{Gr}(r)\) is not.
Consequently \(V\ncong\operatorname{Gr}(r)\).
\end{proof}

\section*{Acknowledgments}
I thank Professor G. Militaru for remarks on this paper and telling me the history that the extending problem was first introduced and studied in \cite{AgoreMilitaru1,AgoreMilitaru2} and subsequently developed for Lie algebras and Leibniz algebras in \cite{AgoreMilitaru,AgoreMilitaru5}.

Most of the first section \ref{sec:prelim} was done while the author was visiting Courant Research Centre, Georg-August Universit\"{a}t G\"{o}ttingen from June to September, 2013. After that I investigated the extending structures for 3-Lie algebras in 2020 and the results were published in \cite{ZhangExtending}.
Recently, I ask chatgpt to help solving the extending problems for $n$-Lie algebras as I did in \cite{ZhangExtending}.
The answer was given in Appendix A of this paper using the full extending datum without any significance in representation theory or cohomology theory,
which is too complicated for non-experts.
Thus I proposed a route to solve the problem by using the action-form unified product which is the ultimate solution to the problem tackled in this paper.
Section \ref{sec:unified} -- section \ref{sec:complements} were done by chatgpt 5.6  under the direction of the author step by step, but the Appendix A and Examples were all most completely done by chatgpt 5.6.
The author has reviewed and corrected the full manuscript in its entirety and takes full responsibility for the work.

\section*{Data availability}

No data was used for the research described in the article.

\appendix

\section{The unrestricted extending problem}
\label{app:full}

The main text imposes the vanishing of all mixed brackets containing
between two and \(n-2\) entries of \(V\).  We now remove that
hypothesis.  The construction below contains every alternating bracket
on \(\g\oplus V\) whose restriction to \(\g\) is the prescribed
\(n\)-Lie bracket.  It is the full \(n\)-ary counterpart of the unified
product philosophy in \cite{AgoreMilitaru,ZhangExtending}.

\subsection{The full extending datum and its direct expansion}

For \(0\leq k\leq n\), consider maps
\begin{align}
 \Gamma_k^{\g}&:\bigwedge^kV\otimes
 \bigwedge^{n-k}\g\longrightarrow\g,
\notag\\
 \Gamma_k^V&:\bigwedge^kV\otimes
 \bigwedge^{n-k}\g\longrightarrow V,
\label{eq:full-component-maps}
\end{align}
alternating separately in their \(V\)-variables and their
\(\g\)-variables.  The degree-zero components are fixed by
\begin{equation}
 \Gamma_0^{\g}(x_1,\ldots,x_n)=[x_1,\ldots,x_n]_{\g},
 \qquad
 \Gamma_0^V(x_1,\ldots,x_n)=0.
\label{eq:full-degree-zero}
\end{equation}

\begin{definition}\label{def:full-datum}
A \emph{full extending datum} of \(\g\) through \(V\) is the family
\[
 \boldsymbol\Gamma=
 \bigl((\Gamma_k^{\g},\Gamma_k^V)\bigr)_{1\leq k\leq n}
\]
together with the boundary convention
\eqref{eq:full-degree-zero}.  No component with
\(2\leq k\leq n-2\) is required to vanish.
\end{definition}

Fix \(x_i\in\g\) and \(u_i\in V\).  For every ordered set of indices
\[
 1\leq i_1<\cdots<i_k\leq n,
\]
let
\[
 1\leq j_1<\cdots<j_{n-k}\leq n
\]
be its ordered complement.  Moving
\(u_{i_1},\ldots,u_{i_k}\) to the first \(k\) positions requires
\begin{equation}
 (i_1-1)+\cdots+(i_k-k)
 =i_1+\cdots+i_k-\frac{k(k+1)}2
\label{eq:full-transposition-count}
\end{equation}
transpositions.  Define
\begin{align}
&\mathcal G_{\boldsymbol\Gamma}
 (x_1,u_1;\ldots;x_n,u_n)
\notag\\
&=\sum_{k=0}^{n}
 \ \sum_{1\leq i_1<\cdots<i_k\leq n}
 (-1)^{i_1+\cdots+i_k-k(k+1)/2}
\notag\\[-1mm]
&\qquad\cdot
 \Gamma_k^{\g}
 (u_{i_1},\ldots,u_{i_k};
  x_{j_1},\ldots,x_{j_{n-k}}),
\label{eq:full-G}\\
&\mathcal H_{\boldsymbol\Gamma}
 (x_1,u_1;\ldots;x_n,u_n)
\notag\\
&=\sum_{k=0}^{n}
 \ \sum_{1\leq i_1<\cdots<i_k\leq n}
 (-1)^{i_1+\cdots+i_k-k(k+1)/2}
\notag\\[-1mm]
&\qquad\cdot
 \Gamma_k^V
 (u_{i_1},\ldots,u_{i_k};
  x_{j_1},\ldots,x_{j_{n-k}}).
\label{eq:full-H}
\end{align}
For \(k=0\), each inner sum contains the single term fixed by
\eqref{eq:full-degree-zero}; for \(k=n\), the list of \(x\)-arguments
is empty.

\begin{proposition}[Full unified-product bracket]
\label{prop:full-bracket}
The formula
\begin{align}
&[x_1+u_1,\ldots,x_n+u_n]_{\boldsymbol\Gamma}
\notag\\
&\qquad=
 \mathcal G_{\boldsymbol\Gamma}(x_1,u_1;\ldots;x_n,u_n)
 +\mathcal H_{\boldsymbol\Gamma}(x_1,u_1;\ldots;x_n,u_n)
\label{eq:full-bracket}
\end{align}
defines an alternating multilinear operation on \(\g\oplus V\), and
every alternating operation on \(\g\oplus V\) restricting to
\([- ,\ldots,-]_{\g}\) is obtained uniquely in this way.
\end{proposition}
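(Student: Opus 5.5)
The plan is to prove the two assertions of \Cref{prop:full-bracket} separately: that the formula defines an alternating operation restricting to \([-,\ldots,-]_{\g}\), and that every such operation arises from exactly one datum. Multilinearity is immediate, since each summand in \eqref{eq:full-G}--\eqref{eq:full-H} is multilinear in the pairs \((x_i,u_i)\). The \(k=0\) term gives \([x_1,\ldots,x_n]_{\g}\) by \eqref{eq:full-degree-zero}, and it is the only surviving term when all \(u_i=0\). So the restriction to \(\g\) is the prescribed bracket.

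For alternation I would use the following reformulation. Extend each pair \((\Gamma_k^{\g},\Gamma_k^V)\) to a multilinear map \(\widetilde\Gamma_k\) on \(E^{n}\), where \(E=\g\oplus V\). On homogeneous tuples with exactly \(k\) entries in \(V\), \(\widetilde\Gamma_k\) takes the value \(\operatorname{sgn}(\pi)\,\Gamma_k(\ldots)\). Here \(\pi\) is the shuffle moving the \(V\)-entries to the front while keeping the relative orders within the \(V\)-block and within the \(\g\)-block. On all other homogeneous tuples \(\widetilde\Gamma_k\) is zero. The count \eqref{eq:full-transposition-count} is precisely the length of this shuffle, so its sign is \((-1)^{i_1+\cdots+i_k-k(k+1)/2}\). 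Expanding \([x_1+u_1,\ldots,x_n+u_n]\) by multilinearity then reproduces \eqref{eq:full-G}--\eqref{eq:full-H} exactly.

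It then suffices to check that \(\widetilde\Gamma_k\) changes sign under each adjacent transposition of homogeneous entries. There are three cases.
\begin{enumerate}[label=\textup{(\alph*)}]
\item Both entries lie in \(V\). The shuffle permutation is unchanged, and the two \(V\)-arguments of \(\Gamma_k\) are swapped; alternation of \(\Gamma_k\) in its \(V\)-variables gives the sign.
\item Both entries lie in \(\g\). This is the same argument, using alternation in the \(\g\)-variables.
\item The entries are mixed. The argument lists of \(\Gamma_k\) are unchanged, and one index \(i_r\) moves by \(\pm1\), so the shuffle sign flips.
\end{enumerate}
An alternating multilinear map must also vanish when two equal entries appear. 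Since the characteristic is zero, antisymmetry under transpositions already gives this, and it extends to nonhomogeneous elements by multilinearity.

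For the converse I would take an alternating \([-,\ldots,-]\) on \(E\) restricting to \([-,\ldots,-]_{\g}\). The datum is read off by
\[
\Gamma_k^{\g}(u_1,\ldots,u_k;x_{k+1},\ldots,x_n)=\pr_{\g}[u_1,\ldots,u_k,x_{k+1},\ldots,x_n],
\]
and similarly \(\Gamma_k^V\) with \(\pr_V\). These maps are separately alternating because the bracket is alternating. The degree-zero convention holds because the bracket restricts to \(\g\), so \(\pr_V\) of an all-\(\g\) bracket is zero. Multilinear expansion of the given bracket, reordering each homogeneous term by the shuffle sign as in the proof of \Cref{thm:realization}, yields \eqref{eq:full-bracket}. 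For uniqueness, the datum is recovered from \([-,\ldots,-]_{\boldsymbol\Gamma}\) by evaluating on tuples \((u_1,\ldots,u_k,x_{k+1},\ldots,x_n)\) with the \(V\)-entries first. Only the term with \(\{i_1,\ldots,i_k\}=\{1,\ldots,k\}\) survives, and its sign is \(+1\), so the two constructions are mutually inverse.

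The main obstacle is the sign bookkeeping in the alternation step: one has to confirm that the explicit exponent in \eqref{eq:full-G} coincides with the shuffle sign under all positional moves. Passing to \(\widetilde\Gamma_k\) and checking adjacent transpositions case by case avoids handling a general permutation of the formula directly.
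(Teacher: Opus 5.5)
Your proposal is correct and follows essentially the same route as the paper. In both, the datum is read off as \(\operatorname{pr}_{\g}\) and \(\operatorname{pr}_V\) of the bracket on \(V\)-first homogeneous tuples, the multilinear expansion carries the shuffle sign \((-1)^{\eta(I)}\), uniqueness comes from restricting to each homogeneous input type, and alternation is checked by the same adjacent-transposition case split (a mixed swap changes the parity of \(\eta(I)\), while a same-type swap uses the separate alternation of \(\Gamma_k^{\bullet}\)); your extension of each \(\Gamma_k\) to a multilinear map \(\widetilde\Gamma_k\) on \(E^n\) just makes explicit the reindexing \(I\mapsto\tau I\) that the paper leaves implicit.
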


\begin{proof}
Let \(B:\bigwedge^n(\g\oplus V)\to\g\oplus V\) be any alternating
operation extending the bracket of \(\g\), and take
\(x_1,\ldots,x_n\in\g\), \(u_1,\ldots,u_n\in V\).  For
\[
I=\{i_1<\cdots<i_k\}\subseteq\{1,\ldots,n\},\qquad
I^c=\{j_1<\cdots<j_{n-k}\},
\]
\[
\eta(I):=\sum_{a=1}^{k}(i_a-a)
=i_1+\cdots+i_k-\frac{k(k+1)}2.
\]
Multilinearity and the \(\eta(I)\) transpositions that move the
\(V\)-entries to the first \(k\) positions give
\begin{align*}
&B(x_1+u_1,\ldots,x_n+u_n)\\
&=\sum_{k=0}^{n}\ \sum_{\substack{I\subseteq\{1,\ldots,n\}\\|I|=k}}
 (-1)^{\eta(I)}
 B(u_{i_1},\ldots,u_{i_k};
 x_{j_1},\ldots,x_{j_{n-k}}),
\end{align*}
where \(I^c=\{j_1<\cdots<j_{n-k}\}\).  Taking the two direct-sum
components defines
\[
\Gamma_k^{\g}=\operatorname{pr}_{\g}
 B|_{\wedge^kV\otimes\wedge^{n-k}\g},
\qquad
\Gamma_k^V=\operatorname{pr}_{V}
 B|_{\wedge^kV\otimes\wedge^{n-k}\g},
\]
and substitution gives
\[
\operatorname{pr}_{\g}B=\mathcal G_{\boldsymbol\Gamma},
\qquad
\operatorname{pr}_{V}B=\mathcal H_{\boldsymbol\Gamma},
\qquad
B=\mathcal G_{\boldsymbol\Gamma}+\mathcal H_{\boldsymbol\Gamma}.
\]
Restriction to each homogeneous input type recovers the two maps
uniquely:
\[
\Gamma_k^{\g},\Gamma_k^V
=
\left.
(\operatorname{pr}_{\g}B,\operatorname{pr}_{V}B)
\right|_{\wedge^kV\otimes\wedge^{n-k}\g}.
\]
Thus the datum \(\boldsymbol\Gamma\) is unique.
It remains to verify alternation of the reconstructed operation.
Interchange the adjacent arguments
\(x_r+u_r\) and \(x_{r+1}+u_{r+1}\).  If exactly one of
\(r,r+1\) belongs to \(I\), the transposition count changes parity;
if both or neither belongs to \(I\), alternation of
\(\Gamma_k^\bullet\) supplies the sign.  Explicitly,
\[
\tau=(r\ r+1),\qquad
\eta(\tau I)\equiv
\eta(I)+
\begin{cases}
1\pmod2,&|\{r,r+1\}\cap I|=1,\\
0\pmod2,&|\{r,r+1\}\cap I|\in\{0,2\},
\end{cases}
\]
\[
\Gamma_k^\bullet(\ldots,u_r,u_{r+1},\ldots;\mathbf x)
=-\Gamma_k^\bullet(\ldots,u_{r+1},u_r,\ldots;\mathbf x),
\]
\[
\Gamma_k^\bullet(\mathbf u;\ldots,x_r,x_{r+1},\ldots)
=-\Gamma_k^\bullet(\mathbf u;\ldots,x_{r+1},x_r,\ldots),
\qquad \bullet\in\{\g,V\},
\]
so every summand changes sign and hence
\[
[x_1+u_1,\ldots,z_r,z_{r+1},\ldots,x_n+u_n]_{\boldsymbol\Gamma}
=-[x_1+u_1,\ldots,z_{r+1},z_r,\ldots,x_n+u_n]_{\boldsymbol\Gamma}.
\]
\end{proof}

If \eqref{eq:full-bracket} satisfies the Filippov identity, the datum is
called a \emph{full extending structure}, and the resulting algebra is
denoted by
\[
 \g\natprod_{\boldsymbol\Gamma}^{\mathrm{full}}V.
\]

\subsection{A direct complete Filippov criterion}

Take arbitrary elements
\[
 \begin{gathered}
 x_1,\ldots,x_{n-1},y_1,\ldots,y_n\in\g,\qquad
 u_1,\ldots,u_{n-1},v_1,\ldots,v_n\in V,\\
 z_a=x_a+u_a\quad(1\leq a\leq n-1),\qquad
 w_i=y_i+v_i\quad(1\leq i\leq n).
 \end{gathered}
\]
Denote the actual components of the inner brackets in \eqref{eq:FI} by
\begin{align}
 \mathcal G_0&=
 \mathcal G_{\boldsymbol\Gamma}(y_1,v_1;\ldots;y_n,v_n),
&
 \mathcal H_0&=
 \mathcal H_{\boldsymbol\Gamma}(y_1,v_1;\ldots;y_n,v_n),
\label{eq:full-inner-zero}\\
 \mathcal G_i&=
 \mathcal G_{\boldsymbol\Gamma}
 (x_1,u_1;\ldots;x_{n-1},u_{n-1};y_i,v_i),
\notag\\[-1mm]
 \mathcal H_i&=
 \mathcal H_{\boldsymbol\Gamma}
 (x_1,u_1;\ldots;x_{n-1},u_{n-1};y_i,v_i).
\label{eq:full-inner-i}
\end{align}
Thus
\[
 [w_1,\ldots,w_n]_{\boldsymbol\Gamma}
 =\mathcal G_0+\mathcal H_0,\qquad
 [z_1,\ldots,z_{n-1},w_i]_{\boldsymbol\Gamma}
 =\mathcal G_i+\mathcal H_i.
\]

\begin{theorem}[Complete full unified-product criterion]
\label{thm:full-criterion}
The datum \(\boldsymbol\Gamma\) is a full extending structure if and
only if, for all the elements chosen above,
\begin{align}
&\mathcal G_{\boldsymbol\Gamma}
 (x_1,u_1;\ldots;x_{n-1},u_{n-1};
  \mathcal G_0,\mathcal H_0)
\notag\\
&\quad=\sum_{i=1}^{n}
 \mathcal G_{\boldsymbol\Gamma}
 (y_1,v_1;\ldots;y_{i-1},v_{i-1};
  \mathcal G_i,\mathcal H_i;
  y_{i+1},v_{i+1};\ldots;y_n,v_n),
\label{eq:full-FI-g}\\
&\mathcal H_{\boldsymbol\Gamma}
 (x_1,u_1;\ldots;x_{n-1},u_{n-1};
  \mathcal G_0,\mathcal H_0)
\notag\\
&\quad=\sum_{i=1}^{n}
 \mathcal H_{\boldsymbol\Gamma}
 (y_1,v_1;\ldots;y_{i-1},v_{i-1};
  \mathcal G_i,\mathcal H_i;
  y_{i+1},v_{i+1};\ldots;y_n,v_n).
\label{eq:full-FI-V}
\end{align}
Equations \eqref{eq:full-G}--\eqref{eq:full-H} expand every term and
every sign in these two identities.
\end{theorem}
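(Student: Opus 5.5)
The proof follows the pattern of \Cref{thm:reduced-unified-product} and \Cref{thm:matched-direct}, with \Cref{prop:full-bracket} in place of \Cref{prop:direct-bracket}. By \Cref{prop:full-bracket}, for all \(a_1,\ldots,a_n\in\g\) and \(b_1,\ldots,b_n\in V\) we have
\[
[a_1+b_1,\ldots,a_n+b_n]_{\boldsymbol\Gamma}
=\mathcal G_{\boldsymbol\Gamma}(a_1,b_1;\ldots;a_n,b_n)
+\mathcal H_{\boldsymbol\Gamma}(a_1,b_1;\ldots;a_n,b_n),
\]
with the first summand in \(\g\) and the second in \(V\). The datum is a full extending structure exactly when \eqref{eq:full-bracket} satisfies \eqref{eq:FI} for all \(z_1,\ldots,z_{n-1},w_1,\ldots,w_n\in\g\oplus V\). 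Every such element is uniquely of the form \(z_a=x_a+u_a\), \(w_i=y_i+v_i\), so it suffices to test \eqref{eq:FI} on the elements fixed before the statement.

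First I evaluate the left-hand side of \eqref{eq:FI}. By \eqref{eq:full-inner-zero}, the inner bracket is \([w_1,\ldots,w_n]_{\boldsymbol\Gamma}=\mathcal G_0+\mathcal H_0\), with \(\mathcal G_0\in\g\) and \(\mathcal H_0\in V\). I apply the displayed formula a second time with last argument \(a_n=\mathcal G_0\), \(b_n=\mathcal H_0\). This gives
\[
\mathcal G_{\boldsymbol\Gamma}(x_1,u_1;\ldots;x_{n-1},u_{n-1};\mathcal G_0,\mathcal H_0)
+\mathcal H_{\boldsymbol\Gamma}(x_1,u_1;\ldots;x_{n-1},u_{n-1};\mathcal G_0,\mathcal H_0).
\]

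Next, for each \(i\), \eqref{eq:full-inner-i} gives \([z_1,\ldots,z_{n-1},w_i]_{\boldsymbol\Gamma}=\mathcal G_i+\mathcal H_i\). Substituting this into the \(i\)-th position of the outer bracket gives a \(\g\)-part
\[
\mathcal G_{\boldsymbol\Gamma}(y_1,v_1;\ldots;y_{i-1},v_{i-1};\mathcal G_i,\mathcal H_i;y_{i+1},v_{i+1};\ldots;y_n,v_n)
\]
and the corresponding \(\mathcal H_{\boldsymbol\Gamma}\) as its \(V\)-part. Summing over \(i\) gives the right-hand side of \eqref{eq:FI}.

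Finally, \(\g\oplus V\) is a direct sum. Hence the Filippov difference vanishes if and only if its \(\g\)-component and its \(V\)-component both vanish. These two conditions are exactly \eqref{eq:full-FI-g} and \eqref{eq:full-FI-V}. Since the elements were arbitrary, this gives both necessity and sufficiency.

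The only real point to check is that the substitution step is legitimate. The formula in \Cref{prop:full-bracket} is valid for \emph{arbitrary} pairs \((a_j,b_j)\in\g\times V\), so it may be applied with computed entries such as \((\mathcal G_0,\mathcal H_0)\) or \((\mathcal G_i,\mathcal H_i)\) in one slot. No homogeneity is needed: multilinearity, already built into \eqref{eq:full-G}--\eqref{eq:full-H}, splits each such entry into its \(\g\)- and \(V\)-parts. The sign bookkeeping is inherited from \eqref{eq:full-transposition-count} and was verified in \Cref{prop:full-bracket}, so the final sentence of the statement, that \eqref{eq:full-G}--\eqref{eq:full-H} expand every term and sign, needs no further work.
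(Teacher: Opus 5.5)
Your proposal is correct and matches the paper's proof: both sides of \eqref{eq:FI} are expanded via \eqref{eq:full-bracket}, with the computed pairs $(\mathcal G_0,\mathcal H_0)$ and $(\mathcal G_i,\mathcal H_i)$ inserted in the relevant slot, and the $\g$- and $V$-components are then compared using the direct sum. The paper's only addition is a closing remark that the homogeneous compatibility identities are recovered by assigning each of the $2n-1$ entries to $\g$ or $V$, which is not needed for the equivalence itself.
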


\begin{proof}
The left-hand side of \eqref{eq:FI} is
\begin{align*}
&[z_1,\ldots,z_{n-1},
 [w_1,\ldots,w_n]_{\boldsymbol\Gamma}]_{\boldsymbol\Gamma}\\
&=[x_1+u_1,\ldots,x_{n-1}+u_{n-1},
 \mathcal G_0+\mathcal H_0]_{\boldsymbol\Gamma}\\
&=\mathcal G_{\boldsymbol\Gamma}
 (x_1,u_1;\ldots;x_{n-1},u_{n-1};
  \mathcal G_0,\mathcal H_0)\\
&\quad+\mathcal H_{\boldsymbol\Gamma}
 (x_1,u_1;\ldots;x_{n-1},u_{n-1};
  \mathcal G_0,\mathcal H_0).
\end{align*}
For \(1\leq i\leq n\), the \(i\)-th summand on the right-hand side is
\begin{align*}
&[w_1,\ldots,w_{i-1},
 [z_1,\ldots,z_{n-1},w_i]_{\boldsymbol\Gamma},
 w_{i+1},\ldots,w_n]_{\boldsymbol\Gamma}\\
&=[y_1+v_1,\ldots,y_{i-1}+v_{i-1},
 \mathcal G_i+\mathcal H_i,
 y_{i+1}+v_{i+1},\ldots,y_n+v_n]_{\boldsymbol\Gamma}\\
&=\mathcal G_{\boldsymbol\Gamma}
 (y_1,v_1;\ldots;y_{i-1},v_{i-1};
  \mathcal G_i,\mathcal H_i;
  y_{i+1},v_{i+1};\ldots;y_n,v_n)\\
&\quad+\mathcal H_{\boldsymbol\Gamma}
 (y_1,v_1;\ldots;y_{i-1},v_{i-1};
  \mathcal G_i,\mathcal H_i;
  y_{i+1},v_{i+1};\ldots;y_n,v_n).
\end{align*}
After summing over \(i\), the \(\g\)-component of the difference
between the two sides of \eqref{eq:FI} is
\begin{align*}
&\mathcal G_{\boldsymbol\Gamma}
 (x_1,u_1;\ldots;x_{n-1},u_{n-1};
  \mathcal G_0,\mathcal H_0)\\
&\quad-\sum_{i=1}^{n}
 \mathcal G_{\boldsymbol\Gamma}
 (y_1,v_1;\ldots;y_{i-1},v_{i-1};
  \mathcal G_i,\mathcal H_i;
  y_{i+1},v_{i+1};\ldots;y_n,v_n),
\end{align*}
and its \(V\)-component is
\begin{align*}
&\mathcal H_{\boldsymbol\Gamma}
 (x_1,u_1;\ldots;x_{n-1},u_{n-1};
  \mathcal G_0,\mathcal H_0)\\
&\quad-\sum_{i=1}^{n}
 \mathcal H_{\boldsymbol\Gamma}
 (y_1,v_1;\ldots;y_{i-1},v_{i-1};
  \mathcal G_i,\mathcal H_i;
  y_{i+1},v_{i+1};\ldots;y_n,v_n).
\end{align*}
Since \(\g\oplus V\) is a direct sum, the Filippov difference vanishes
exactly when these two components vanish, which gives
\eqref{eq:full-FI-g} and \eqref{eq:full-FI-V}.

Every homogeneous compatibility identity is recovered by assigning
each of the \(2n-1\) entries independently to \(\g\) or \(V\), then
setting the unused component of that entry equal to zero in
\eqref{eq:full-FI-g}--\eqref{eq:full-FI-V}.  Thus the two displayed
component equations contain the complete, non-reduced criterion.
\end{proof}

\subsection{Realization of arbitrary extending structures}

Let \(E\) be a vector space containing \(\g\), and let
\([- ,\ldots,-]_E\) be an \(n\)-Lie bracket whose restriction to
\(\g\) is the fixed bracket.  Choose a projection
\[
 p:E\longrightarrow\g,
 \qquad p(x)=x\quad(x\in\g),
\]
and put
\[
 V=\Ker p,
 \qquad q=\id_E-p.
\]
Then \(E=\g\oplus V\).  For \(1\leq k\leq n\), define
\begin{align}
&\Gamma_k^{\g}(u_1,\ldots,u_k;
 x_{k+1},\ldots,x_n)
\notag\\
&\qquad=
 p\bigl([u_1,\ldots,u_k,x_{k+1},\ldots,x_n]_E\bigr),
\label{eq:full-extract-g}\\
&\Gamma_k^V(u_1,\ldots,u_k;
 x_{k+1},\ldots,x_n)
\notag\\
&\qquad=
 q\bigl([u_1,\ldots,u_k,x_{k+1},\ldots,x_n]_E\bigr).
\label{eq:full-extract-V}
\end{align}

\begin{theorem}[Full realization theorem]
\label{thm:full-realization}
The maps \eqref{eq:full-extract-g}--\eqref{eq:full-extract-V} form a
full extending structure.  The linear map
\[
 \Phi:\g\natprod_{\boldsymbol\Gamma}^{\mathrm{full}}V
 \longrightarrow E,
 \qquad \Phi(x+u)=x+u,
\]
is an \(n\)-Lie algebra isomorphism fixing \(\g\) pointwise and
inducing the identity on \(E/\g\cong V\).  Conversely, every full
unified product is an extending structure of \(\g\) on
\(\g\oplus V\).
\end{theorem}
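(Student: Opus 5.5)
The plan is to reduce everything to \Cref{prop:full-bracket} and \Cref{thm:full-criterion}, following the same pattern as the proof of \Cref{thm:realization} but without the sparsity hypothesis.

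\emph{Step 1: the extracted datum is admissible.} Since \(E=\g\oplus V\) with \(V=\Ker p\), the maps \(p\) and \(q=\id_E-p\) are the projections \(\pr_{\g}\) and \(\pr_V\). Each \(\Gamma_k^{\g},\Gamma_k^V\) defined by \eqref{eq:full-extract-g}--\eqref{eq:full-extract-V} is multilinear. Alternation of \([-,\ldots,-]_E\) makes it alternating separately in the \(V\)-variables and in the \(\g\)-variables. For \(k=0\), the restriction of the bracket to \(\g\) is \([-,\ldots,-]_{\g}\in\g\). Hence \(p\) returns the bracket of \(\g\) and \(q\) returns \(0\), which is exactly the boundary convention \eqref{eq:full-degree-zero}. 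So \(\boldsymbol\Gamma\) is a full extending datum in the sense of \Cref{def:full-datum}.

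\emph{Step 2: the transported bracket coincides with \([-,\ldots,-]_E\).} Take \(B=[-,\ldots,-]_E\), transported to \(\g\oplus V\) by the identity. This \(B\) is an alternating operation extending the bracket of \(\g\). By the uniqueness part of \Cref{prop:full-bracket}, \(B\) has the form \eqref{eq:full-bracket} for the datum read off from its homogeneous restrictions, and that datum is precisely \eqref{eq:full-extract-g}--\eqref{eq:full-extract-V}. Concretely, expand \([x_1+u_1,\ldots,x_n+u_n]_E\) by multilinearity. Move the \(V\)-entries to the front using \(\eta(I)\) transpositions, as in \eqref{eq:full-transposition-count}. Then apply \(p\) and \(q\); this yields \(\mathcal G_{\boldsymbol\Gamma}\) and \(\mathcal H_{\boldsymbol\Gamma}\) term by term. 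Thus \(\Phi(x+u)=x+u\) satisfies
\[
\Phi([a_1,\ldots,a_n]_{\boldsymbol\Gamma})=[\Phi(a_1),\ldots,\Phi(a_n)]_E
\]
for arbitrary \(a_i\in\g\oplus V\).

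\emph{Step 3: Filippov identity and the properties of \(\Phi\).} Since \([-,\ldots,-]_E\) satisfies \eqref{eq:FI} and the two brackets agree under the linear bijection \(\Phi\), the bracket \([-,\ldots,-]_{\boldsymbol\Gamma}\) satisfies \eqref{eq:FI}. Equivalently, by \Cref{thm:full-criterion}, the identities \eqref{eq:full-FI-g}--\eqref{eq:full-FI-V} hold. So \(\boldsymbol\Gamma\) is a full extending structure and \(\Phi\) is an \(n\)-Lie isomorphism. It fixes \(\g\) pointwise by construction. It induces the identity on \(E/\g\cong V\) because \(\Phi(u)=u\) for \(u\in V\).

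\emph{Step 4: the converse.} Suppose \(\boldsymbol\Gamma\) is a full extending structure. Then \eqref{eq:full-bracket} is an \(n\)-Lie bracket on \(\g\oplus V\). Restricted to \(u_i=0\), only the \(k=0\) term survives, giving \([x_1,\ldots,x_n]_{\g}+0\). Hence \(\g\) is a subalgebra with its given bracket, so this is an extending structure of \(\g\) on \(\g\oplus V\).

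The only delicate point is the sign bookkeeping in Step 2. I would avoid repeating it by invoking the already-proved uniqueness and alternation statements of \Cref{prop:full-bracket} rather than recomputing the signs \((-1)^{\eta(I)}\).
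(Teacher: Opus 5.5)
Your proof is correct and takes essentially the same route as the paper. Both arguments expand $[x_1+u_1,\ldots,x_n+u_n]_E$ by multilinearity with the signs $(-1)^{\eta(I)}$ and apply $p$ and $q$ to identify the result with $\mathcal G_{\boldsymbol\Gamma}+\mathcal H_{\boldsymbol\Gamma}$. They then transfer the Filippov identity, equivalently \eqref{eq:full-FI-g}--\eqref{eq:full-FI-V} via \Cref{thm:full-criterion}, and handle the converse using $\Gamma_0^{\g}=[-,\ldots,-]_{\g}$ and $\Gamma_0^V=0$. Your Step 1 check that the extracted maps are separately alternating and satisfy \eqref{eq:full-degree-zero} is a detail the paper leaves implicit.
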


\begin{proof}
Let \(x_1,\ldots,x_n\in\g\) and \(u_1,\ldots,u_n\in V\).  Since
\(p|_{\g}=\id_{\g}\) and \(V=\ker p\),
\[
\begin{gathered}
E=\g\oplus V,\qquad p^2=p,\qquad q=\id_E-p,\\
\Phi(x+u)=x+u,\qquad \Phi^{-1}(e)=p(e)+q(e).
\end{gathered}
\]
Moving the selected \(u\)-entries to the front gives
\begin{align*}
&[x_1+u_1,\ldots,x_n+u_n]_E\\
&=\sum_{k=0}^{n}
 \ \sum_{1\leq i_1<\cdots<i_k\leq n}
 (-1)^{i_1+\cdots+i_k-k(k+1)/2}\\
&\qquad\cdot
 [u_{i_1},\ldots,u_{i_k},
 x_{j_1},\ldots,x_{j_{n-k}}]_E.
\end{align*}
Applying \(p\) and \(q\) term by term and using
\eqref{eq:full-extract-g}--\eqref{eq:full-extract-V}, we obtain
\begin{align*}
p[x_1+u_1,\ldots,x_n+u_n]_E
&=\mathcal G_{\boldsymbol\Gamma}
 (x_1,u_1;\ldots;x_n,u_n),\\
q[x_1+u_1,\ldots,x_n+u_n]_E
&=\mathcal H_{\boldsymbol\Gamma}
 (x_1,u_1;\ldots;x_n,u_n),
\end{align*}
and hence, on arbitrary \(x_i+u_i\),
\[
[\Phi(x_1+u_1),\ldots,\Phi(x_n+u_n)]_E
=\Phi([x_1+u_1,\ldots,x_n+u_n]_{\boldsymbol\Gamma}).
\]
Apply \eqref{eq:FI} in \(E\) to the outer elements
\(x_1+u_1,\ldots,x_{n-1}+u_{n-1}\) and the inner elements
\(y_1+v_1,\ldots,y_n+v_n\), and take its two components.  By
\Cref{thm:full-criterion}, these components are exactly
\eqref{eq:full-FI-g} and \eqref{eq:full-FI-V}.  Moreover,
\(\Phi|_{\g}=\id_{\g}\) and
\(\overline{\Phi}=\id_{E/\g}\).

Conversely,
\(\Gamma_0^{\g}=[- ,\ldots,-]_{\g}\) and \(\Gamma_0^V=0\) show
directly that \([\g^n]_{\boldsymbol\Gamma}\subseteq\g\).  If
\eqref{eq:full-FI-g} and \eqref{eq:full-FI-V} hold, their sum is the
Filippov identity on
\(\g\natprod_{\boldsymbol\Gamma}^{\mathrm{full}}V\).
\end{proof}

The theorem shows why the middle components are essential.  For a
general complement \(V\), the values in
\eqref{eq:full-extract-g}--\eqref{eq:full-extract-V} need not vanish
when \(2\leq k\leq n-2\); hence no datum retaining only the two extreme
mixed degrees can represent every extending structure when \(n>3\).

\subsection{Morphisms and classification}

Let \(\boldsymbol\Gamma\) and \(\boldsymbol\Lambda\) be two full
extending structures of \(\g\) through the same vector space \(V\).
Every linear map \(\psi:\g\oplus V\to\g\oplus V\) fixing \(\g\)
pointwise has a unique form
\begin{equation}
 \psi(x+u)=x+r(u)+s(u),
 \qquad r:V\to\g,\qquad s:V\to V.
\label{eq:full-psi}
\end{equation}

\begin{theorem}[Full homomorphism criterion]
\label{thm:full-homomorphism}
The map \eqref{eq:full-psi} is an \(n\)-Lie algebra homomorphism
\[
 \g\natprod_{\boldsymbol\Gamma}^{\mathrm{full}}V
 \longrightarrow
 \g\natprod_{\boldsymbol\Lambda}^{\mathrm{full}}V
\]
if and only if, for all \(x_i\in\g\) and \(u_i\in V\),
\begin{align}
&\mathcal G_{\boldsymbol\Gamma}
 (x_1,u_1;\ldots;x_n,u_n)
 +r\bigl(\mathcal H_{\boldsymbol\Gamma}
 (x_1,u_1;\ldots;x_n,u_n)\bigr)
\notag\\
&\quad=
 \mathcal G_{\boldsymbol\Lambda}
 (x_1+r(u_1),s(u_1);\ldots;
  x_n+r(u_n),s(u_n)),
\label{eq:full-morphism-g}\\
&s\bigl(\mathcal H_{\boldsymbol\Gamma}
 (x_1,u_1;\ldots;x_n,u_n)\bigr)
\notag\\
&\quad=
 \mathcal H_{\boldsymbol\Lambda}
 (x_1+r(u_1),s(u_1);\ldots;
  x_n+r(u_n),s(u_n)).
\label{eq:full-morphism-V}
\end{align}
Moreover, \(\psi\) is an isomorphism if and only if
\(s\in\GL(V)\), and it induces the identity on \(E/\g\) if and only if
\(s=\id_V\).
\end{theorem}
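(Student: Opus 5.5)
The plan is to compare the two sides of the homomorphism condition $\psi([w_1,\ldots,w_n]_{\boldsymbol\Gamma})=[\psi(w_1),\ldots,\psi(w_n)]_{\boldsymbol\Lambda}$ on arbitrary elements $w_i=x_i+u_i$, exactly as in \Cref{thm:action-morphisms}. The difference is that the direct expansion of \Cref{prop:full-bracket} will be used, so no degree-by-degree bookkeeping is needed.

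\textbf{Source side.} By \eqref{eq:full-bracket},
\[
[w_1,\ldots,w_n]_{\boldsymbol\Gamma}=\mathcal G_{\boldsymbol\Gamma}(x_1,u_1;\ldots;x_n,u_n)+\mathcal H_{\boldsymbol\Gamma}(x_1,u_1;\ldots;x_n,u_n),
\]
with the first summand in $\g$ and the second in $V$. Applying \eqref{eq:full-psi} gives
\[
\mathcal G_{\boldsymbol\Gamma}+r(\mathcal H_{\boldsymbol\Gamma})+s(\mathcal H_{\boldsymbol\Gamma}).
\]
Its $\g$-component is $\mathcal G_{\boldsymbol\Gamma}+r(\mathcal H_{\boldsymbol\Gamma})$ and its $V$-component is $s(\mathcal H_{\boldsymbol\Gamma})$.

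\textbf{Target side.} Each $\psi(w_i)$ has $\g$-part $x_i+r(u_i)$ and $V$-part $s(u_i)$. Since $\mathcal G_{\boldsymbol\Lambda}$ and $\mathcal H_{\boldsymbol\Lambda}$ are defined for arbitrary pairs (element of $\g$, element of $V$), \Cref{prop:full-bracket} applied to $\boldsymbol\Lambda$ gives directly that $[\psi(w_1),\ldots,\psi(w_n)]_{\boldsymbol\Lambda}$ equals
\[
\mathcal G_{\boldsymbol\Lambda}(x_1+r(u_1),s(u_1);\ldots)+\mathcal H_{\boldsymbol\Lambda}(x_1+r(u_1),s(u_1);\ldots).
\]

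\textbf{Comparison.} The decomposition $\g\oplus V$ is direct, so the two sides agree exactly when their $\g$-components and their $V$-components agree separately. These two equalities are \eqref{eq:full-morphism-g} and \eqref{eq:full-morphism-V}. The $w_i$ were arbitrary, so this proves both necessity and sufficiency. The only point requiring care is that the target formula really does allow inputs whose $\g$-parts are $x_i+r(u_i)$. This holds because \eqref{eq:full-G}--\eqref{eq:full-H} are multilinear expressions in arbitrary arguments, so no re-expansion in $r$ is required. This is the step I expect to need the most explicit justification, even though it is mild.

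\textbf{Isomorphism and identity on $E/\g$.} I would repeat the block-matrix argument from the end of the proof of \Cref{thm:action-morphisms}. The matrix of $\psi$ is $\begin{pmatrix}\id_{\g}&r\\0&s\end{pmatrix}$. If $s$ is invertible, its inverse is $\begin{pmatrix}\id_{\g}&-rs^{-1}\\0&s^{-1}\end{pmatrix}$. Conversely, $\psi$ induces $s$ on $(\g\oplus V)/\g\cong V$, so invertibility of $\psi$ forces invertibility of $s$. The inverse of a bijective homomorphism is automatically a homomorphism, so bijectivity is all that is needed. Finally, the induced map on $E/\g$ is $s$, so $\psi$ induces the identity there exactly when $s=\id_V$.
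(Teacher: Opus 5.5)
Your proposal is correct and matches the paper's proof: you apply $\psi$ to the source bracket $\mathcal G_{\boldsymbol\Gamma}+\mathcal H_{\boldsymbol\Gamma}$, expand the target bracket by \eqref{eq:full-bracket} with $\g$-arguments $x_i+r(u_i)$ and $V$-arguments $s(u_i)$, compare direct-sum components, and then use the triangular block matrix and the induced quotient map $s$. Your added remarks (a bijective homomorphism automatically has a homomorphic inverse, and no determinant is needed) make the final step slightly cleaner, and it then holds without any finite-dimensionality assumption.
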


\begin{proof}
Take arbitrary \(x_1,\ldots,x_n\in\g\) and
\(u_1,\ldots,u_n\in V\).  Applying \(\psi\) to their source bracket
gives
\begin{align*}
&\psi([x_1+u_1,\ldots,x_n+u_n]_{\boldsymbol\Gamma})\\
&=\mathcal G_{\boldsymbol\Gamma}
 (x_1,u_1;\ldots;x_n,u_n)\\
&\quad+r\!\left(\mathcal H_{\boldsymbol\Gamma}
 (x_1,u_1;\ldots;x_n,u_n)\right)
 +s\!\left(\mathcal H_{\boldsymbol\Gamma}
 (x_1,u_1;\ldots;x_n,u_n)\right).
\end{align*}
On the other hand, applying \(\psi\) to each element first gives
\begin{align*}
&[\psi(x_1+u_1),\ldots,\psi(x_n+u_n)]_{\boldsymbol\Lambda}\\
&=[x_1+r(u_1)+s(u_1),\ldots,
 x_n+r(u_n)+s(u_n)]_{\boldsymbol\Lambda}\\
&=\mathcal G_{\boldsymbol\Lambda}
 (x_1+r(u_1),s(u_1);\ldots;x_n+r(u_n),s(u_n))\\
&\quad+\mathcal H_{\boldsymbol\Lambda}
 (x_1+r(u_1),s(u_1);\ldots;x_n+r(u_n),s(u_n)).
\end{align*}
The \(\g\)-components of these two expressions agree exactly under
\eqref{eq:full-morphism-g}, and the \(V\)-components agree exactly
under \eqref{eq:full-morphism-V}.  Thus
\begin{align*}
&\psi([x_1+u_1,\ldots,x_n+u_n]_{\boldsymbol\Gamma})\\
&\quad=[\psi(x_1+u_1),\ldots,
        \psi(x_n+u_n)]_{\boldsymbol\Lambda}.
\end{align*}
Conversely, bracket preservation on these arbitrary elements gives
\eqref{eq:full-morphism-g} after comparison in \(\g\), and it gives
\eqref{eq:full-morphism-V} after comparison in \(V\).
Finally, with respect to \(\g\oplus V\),
\[
[\psi]_{\g\oplus V}
=\begin{pmatrix}\id_{\g}&r\\0&s\end{pmatrix},
\qquad
\det[\psi]=\det s,
\qquad
[\psi]^{-1}
=\begin{pmatrix}\id_{\g}&-rs^{-1}\\0&s^{-1}\end{pmatrix}.
\]
If \(s\in\GL(V)\), the displayed inverse proves that \(\psi\) is an
isomorphism.  Conversely, invertibility of \(\psi\) implies
invertibility of its quotient map \(s\).  Finally,
\[
\overline{\psi}:E/\g\to E/\g,\qquad
\overline{\psi}(u+\g)=s(u)+\g.
\]
Thus \(\overline{\psi}=\id\) precisely when \(s=\id_V\).
\end{proof}

The preceding criterion can be written degree by degree without any
implicit permutation sign.  Fix \(1\leq k\leq n\), take
\(u_1,\ldots,u_k\in V\) and
\(x_{k+1},\ldots,x_n\in\g\), and for
\(1\leq i_1<\cdots<i_{\ell}\leq k\), let
\(j_1<\cdots<j_{k-\ell}\) be the complementary indices in
\(\{1,\ldots,k\}\).

\begin{corollary}[Explicit degreewise form]
\label{cor:full-degreewise-morphism}
Equations \eqref{eq:full-morphism-g}--\eqref{eq:full-morphism-V} are
equivalent to the following identities, for every \(1\leq k\leq n\):
\begin{align}
&\Gamma_k^{\g}(u_1,\ldots,u_k;x_{k+1},\ldots,x_n)
 +r\bigl(\Gamma_k^V
 (u_1,\ldots,u_k;x_{k+1},\ldots,x_n)\bigr)
\notag\\
&=\sum_{\ell=0}^{k}
 \ \sum_{1\leq i_1<\cdots<i_{\ell}\leq k}
 (-1)^{i_1+\cdots+i_{\ell}-\ell(\ell+1)/2}
\notag\\[-1mm]
&\quad\cdot
 \Lambda_{\ell}^{\g}
 \bigl(s(u_{i_1}),\ldots,s(u_{i_{\ell}});
 r(u_{j_1}),\ldots,r(u_{j_{k-\ell}}),
 x_{k+1},\ldots,x_n\bigr),
\label{eq:full-degree-morphism-g}\\
&s\bigl(\Gamma_k^V
 (u_1,\ldots,u_k;x_{k+1},\ldots,x_n)\bigr)
\notag\\
&=\sum_{\ell=0}^{k}
 \ \sum_{1\leq i_1<\cdots<i_{\ell}\leq k}
 (-1)^{i_1+\cdots+i_{\ell}-\ell(\ell+1)/2}
\notag\\[-1mm]
&\quad\cdot
 \Lambda_{\ell}^V
 \bigl(s(u_{i_1}),\ldots,s(u_{i_{\ell}});
 r(u_{j_1}),\ldots,r(u_{j_{k-\ell}}),
 x_{k+1},\ldots,x_n\bigr).
\label{eq:full-degree-morphism-V}
\end{align}
For \(\ell=0\), the first target component is
\[
 [r(u_1),\ldots,r(u_k),x_{k+1},\ldots,x_n]_{\g},
\]
and the second target component is zero.
\end{corollary}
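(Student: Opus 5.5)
The plan is to derive the degreewise identities by evaluating the elementwise criterion of \Cref{thm:full-homomorphism} on homogeneous inputs. In both directions we use multilinearity: it suffices to check \eqref{eq:full-morphism-g}--\eqref{eq:full-morphism-V} on homogeneous tuples. By alternation (\Cref{prop:full-bracket}), such a tuple may be taken with its \(k\) entries from \(V\) in the first positions. Concretely, fix \(1\le k\le n\). In \eqref{eq:full-morphism-g}--\eqref{eq:full-morphism-V} put the \(a\)-th argument equal to \((0,u_a)\) for \(a\le k\), and to \((x_a,0)\) for \(a>k\). The case \(k=0\) is automatic, since \(r(0)=s(0)=0\), \(\Gamma_0^V=\Lambda_0^V=0\), and \(\Gamma_0^{\g}=\Lambda_0^{\g}=[-,\ldots,-]_{\g}\).

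On the source side, formulas \eqref{eq:full-G}--\eqref{eq:full-H} applied to this tuple keep only the subset \(I=\{1,\ldots,k\}\). Every other subset either selects some zero \(u_a\) or selects an index \(a\le k\) in the \(\g\)-slot, where \(x_a=0\). The sign is \(\eta(I)=0\). So the left sides reduce to \(\Gamma_k^{\g}(u_1,\ldots,u_k;x_{k+1},\ldots,x_n)+r(\Gamma_k^V(\cdots))\) and \(s(\Gamma_k^V(\cdots))\).

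On the target side the arguments are \((r(u_a),s(u_a))\) for \(a\le k\) and \((x_a,0)\) for \(a>k\). In \eqref{eq:full-G}--\eqref{eq:full-H} for \(\boldsymbol\Lambda\), only subsets \(I\subseteq\{1,\ldots,k\}\) survive, since \(s\)-components vanish beyond \(k\). Write \(I=\{i_1<\cdots<i_\ell\}\). Its complement in \(\{1,\ldots,n\}\) is \(\{j_1<\cdots<j_{k-\ell}\}\cup\{k+1,\ldots,n\}\), listed in increasing order, so the \(\g\)-arguments appear exactly as \(r(u_{j_1}),\ldots,r(u_{j_{k-\ell}}),x_{k+1},\ldots,x_n\). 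The sign \((-1)^{\eta(I)}\) depends only on \(I\), and \(\eta(I)=i_1+\cdots+i_\ell-\ell(\ell+1)/2\) is the same whether we regard \(I\) inside \(\{1,\ldots,k\}\) or inside \(\{1,\ldots,n\}\). This yields precisely the right sides of \eqref{eq:full-degree-morphism-g}--\eqref{eq:full-degree-morphism-V}. For \(\ell=0\) the summands are \(\Lambda_0^{\g}=[r(u_1),\ldots,r(u_k),x_{k+1},\ldots,x_n]_{\g}\) and \(\Lambda_0^V=0\), which matches the remark after the statement. This establishes that \eqref{eq:full-morphism-g}--\eqref{eq:full-morphism-V} imply the degreewise identities.

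For the converse we must show that the degreewise identities imply the elementwise ones. Both sides of \eqref{eq:full-morphism-g} and of \eqref{eq:full-morphism-V} are multilinear in the \(2n\) variables \((x_a,u_a)\). Each side is also alternating in the combined entries \(x_a+u_a\). For the right-hand side this holds because \(x+u\mapsto (x+r(u),s(u))\) is linear and \(\mathcal G_{\boldsymbol\Lambda}+\mathcal H_{\boldsymbol\Lambda}\) is alternating. Expanding each entry as \(x_a+u_a\) then writes each side as a signed sum of homogeneous tuples. Alternation reorders each tuple to the normalized form above, and the reordering contributes the same sign \((-1)^{\eta(I)}\) on both sides. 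Hence agreement on normalized homogeneous tuples, which is exactly the degreewise identities, gives agreement everywhere.

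The main obstacle is the sign bookkeeping in the target expansion. The points to verify are that restricting to subsets of \(\{1,\ldots,k\}\) yields the increasing complement list in exactly the displayed order, and that no additional reordering sign arises between the \(r(u_j)\) and the \(x\)'s. I would check these on the adjacent-transposition description of \(\eta\) already used in \Cref{prop:full-bracket}.
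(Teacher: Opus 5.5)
Your proposal is correct and follows essentially the paper's route: the paper also evaluates bracket preservation on the normalized tuple $(u_1,\ldots,u_k,x_{k+1},\ldots,x_n)$. It expands $[\psi(u_1),\ldots,\psi(u_k),x_{k+1},\ldots,x_n]_{\boldsymbol\Lambda}$ over subsets $I\subseteq\{1,\ldots,k\}$ with sign $(-1)^{\eta(I)}$, compares components, and obtains the converse by multilinearity over all homogeneous input types. Your use of alternation to normalize each homogeneous tuple in the converse makes the paper's one-line appeal to multilinearity more precise. One small wording slip: both sides are linear in each combined entry $x_a+u_a$, not multilinear in the $2n$ separate variables.
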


\begin{proof}
Fix \(1\leq k\leq n\), take \(u_1,\ldots,u_k\in V\) and
\(x_{k+1},\ldots,x_n\in\g\).  Since
\[
\psi(u_i)=r(u_i)+s(u_i),\qquad \psi(x_j)=x_j,
\]
multilinearity expands the target bracket by the subset of the first
\(k\) positions at which the \(V\)-component \(s(u_i)\) is chosen.
For
\[
I=\{i_1<\cdots<i_{\ell}\}\subseteq\{1,\ldots,k\},\qquad
I^c=\{j_1<\cdots<j_{k-\ell}\},
\]
\[
\eta(I)=\sum_{a=1}^{\ell}(i_a-a)
=i_1+\cdots+i_{\ell}-\frac{\ell(\ell+1)}2.
\]
the number \(\eta(I)\) is the number of transpositions that move those
\(\ell\) entries to the first positions.  Therefore
\begin{align*}
&[\psi(u_1),\ldots,\psi(u_k),x_{k+1},\ldots,x_n]_{\boldsymbol\Lambda}\\
&=\sum_{\ell=0}^{k}
 \ \sum_{1\leq i_1<\cdots<i_{\ell}\leq k}
 (-1)^{\eta(I)}
 \Bigl[
 s(u_{i_1}),\ldots,s(u_{i_{\ell}});\\
&\hspace{50mm}
 r(u_{j_1}),\ldots,r(u_{j_{k-\ell}}),
 x_{k+1},\ldots,x_n
 \Bigr]_{\boldsymbol\Lambda},
\end{align*}
and its two components are
\begin{align*}
\operatorname{pr}_{\g}(\cdots)
&=\sum_{\ell=0}^{k}\sum_{|I|=\ell}(-1)^{\eta(I)}
 \Lambda_{\ell}^{\g}
 (s(u_I);r(u_{I^c}),x_{k+1},\ldots,x_n),\\
\operatorname{pr}_{V}(\cdots)
&=\sum_{\ell=0}^{k}\sum_{|I|=\ell}(-1)^{\eta(I)}
 \Lambda_{\ell}^{V}
 (s(u_I);r(u_{I^c}),x_{k+1},\ldots,x_n).
\end{align*}
The source bracket on the same explicit tuple is
\begin{align*}
&[u_1,\ldots,u_k,x_{k+1},\ldots,x_n]_{\boldsymbol\Gamma}\\
&\quad=\Gamma_k^{\g}
 (u_1,\ldots,u_k;x_{k+1},\ldots,x_n)\\
&\qquad+\Gamma_k^V
 (u_1,\ldots,u_k;x_{k+1},\ldots,x_n).
\end{align*}
After applying \(\psi\), its components are
\begin{align*}
&\operatorname{pr}_{\g}\psi(
[u_1,\ldots,u_k,x_{k+1},\ldots,x_n]_{\boldsymbol\Gamma})\\
&\quad=\Gamma_k^{\g}
 (u_1,\ldots,u_k;x_{k+1},\ldots,x_n)\\
&\qquad+r\!\left(\Gamma_k^V
 (u_1,\ldots,u_k;x_{k+1},\ldots,x_n)\right),\\
&\operatorname{pr}_{V}\psi(
[u_1,\ldots,u_k,x_{k+1},\ldots,x_n]_{\boldsymbol\Gamma})\\
&\quad=s\!\left(\Gamma_k^V
 (u_1,\ldots,u_k;x_{k+1},\ldots,x_n)\right).
\end{align*}
Consequently, bracket preservation on this mixed degree is the
equality
\begin{align*}
&\psi([u_1,\ldots,u_k,x_{k+1},\ldots,x_n]_{\boldsymbol\Gamma})\\
&\quad=[\psi(u_1),\ldots,\psi(u_k),
        x_{k+1},\ldots,x_n]_{\boldsymbol\Lambda},
\end{align*}
Comparison of its \(\g\)-components gives
\eqref{eq:full-degree-morphism-g}; comparison of its \(V\)-components
gives \eqref{eq:full-degree-morphism-V}.
As \(k=0,\ldots,n\) runs through all homogeneous input types,
multilinearity proves equivalence with
\eqref{eq:full-morphism-g}--\eqref{eq:full-morphism-V}.
\end{proof}

Define \(\boldsymbol\Gamma\equiv\boldsymbol\Lambda\) when
\eqref{eq:full-morphism-g}--\eqref{eq:full-morphism-V} hold for some
\(r:V\to\g\) and \(s\in\GL(V)\).  Define
\(\boldsymbol\Gamma\approx\boldsymbol\Lambda\) when they hold with
\(s=\id_V\).  Let \(\mathfrak T_n^{\mathrm{full}}(\g,V)\) denote the
set of all full extending structures.

\begin{theorem}[Classification of arbitrary extending structures]
\label{thm:full-classification}
There are canonical bijections
\begin{align}
 \mathfrak T_n^{\mathrm{full}}(\g,V)/{\equiv}
 &\longrightarrow \Extd_n(E,\g),
\notag\\
 \mathfrak T_n^{\mathrm{full}}(\g,V)/{\approx}
 &\longrightarrow \Extd'_n(E,\g),
\label{eq:full-classification}
\end{align}
where \(\Extd_n(E,\g)\) denotes extending brackets modulo
isomorphisms fixing \(\g\), and \(\Extd'_n(E,\g)\) denotes the finer
quotient in which the induced map on \(E/\g\) is also the identity.
Both bijections send the class of \(\boldsymbol\Gamma\) to the class
of \(\g\natprod_{\boldsymbol\Gamma}^{\mathrm{full}}V\).
\end{theorem}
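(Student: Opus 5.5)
The plan is to define the map on representatives, $\boldsymbol\Gamma\mapsto(\g\oplus V,[-,\ldots,-]_{\boldsymbol\Gamma})$ transported to $E$ via the fixed identification $E=\g\oplus V$. Then check in turn that it is well defined on classes, injective and surjective, using \Cref{thm:full-realization} and \Cref{thm:full-homomorphism}.

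\emph{Step 1 (well-definedness).} By \Cref{thm:full-realization}, every full extending structure $\boldsymbol\Gamma$ yields an $n$-Lie bracket on $\g\oplus V\cong E$ whose restriction to $\g$ is $[-,\ldots,-]_{\g}$, so it is an element of the set of extending brackets. If $\boldsymbol\Gamma\equiv\boldsymbol\Lambda$ via $(r,s)$ with $s\in\GL(V)$, then \Cref{thm:full-homomorphism} says $\psi(x+u)=x+r(u)+s(u)$ is an $n$-Lie isomorphism $\g\natprod_{\boldsymbol\Gamma}^{\mathrm{full}}V\to\g\natprod_{\boldsymbol\Lambda}^{\mathrm{full}}V$ fixing $\g$ pointwise. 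Hence the two brackets define the same class in $\Extd_n(E,\g)$. When $s=\id_V$, the same theorem says $\psi$ induces the identity on $E/\g$, which gives the statement for $\approx$ and $\Extd'_n$.

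\emph{Step 2 (surjectivity).} Given any extending bracket on $E$, take the projection $p:E\to\g$ with kernel the fixed $V$ and extract $\boldsymbol\Gamma$ by \eqref{eq:full-extract-g}--\eqref{eq:full-extract-V}. \Cref{thm:full-realization} shows that $\boldsymbol\Gamma$ is a full extending structure. It also shows that $\Phi=\id$ is an isomorphism fixing $\g$ and inducing $\id_{E/\g}$, so the class of $\boldsymbol\Gamma$ maps to the given class in both quotients. If a bracket is defined on an abstract $E$ with a different complement, first identify $E\cong\g\oplus V$ by a linear isomorphism that is the identity on $\g$, and transport the bracket along it.

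\emph{Step 3 (injectivity), the main point.} Suppose $\g\natprod_{\boldsymbol\Gamma}^{\mathrm{full}}V$ and $\g\natprod_{\boldsymbol\Lambda}^{\mathrm{full}}V$ are isomorphic via some $\psi$ fixing $\g$ pointwise. Any linear map fixing $\g$ pointwise has the form \eqref{eq:full-psi}, with $r=\operatorname{pr}_{\g}\psi|_V$ and $s=\operatorname{pr}_V\psi|_V$. Bracket preservation is exactly \eqref{eq:full-morphism-g}--\eqref{eq:full-morphism-V} by \Cref{thm:full-homomorphism}. Bijectivity of $\psi$ forces $s\in\GL(V)$, since $s$ is the induced map on $E/\g$. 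Hence $\boldsymbol\Gamma\equiv\boldsymbol\Lambda$. If moreover $\psi$ induces the identity on $E/\g$, then $s=\id_V$ and $\boldsymbol\Gamma\approx\boldsymbol\Lambda$.

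One point needs care. The relations $\equiv$ and $\approx$ must be equivalence relations for the quotients to make sense. This follows because isomorphisms of the required type compose and invert: the inverse matrix $\begin{pmatrix}\id&-rs^{-1}\\0&s^{-1}\end{pmatrix}$ and compositions keep the block-triangular shape with identity upper-left corner. Equivalently, the relations are pullbacks of isomorphism relations on algebras. I expect no real obstacle beyond this bookkeeping, because the heavy sign calculations have already been absorbed into Propositions~\ref{prop:full-bracket} and Theorems~\ref{thm:full-realization}--\ref{thm:full-homomorphism}.
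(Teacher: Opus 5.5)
Your proposal is correct and follows essentially the same route as the paper. Surjectivity comes from the full realization theorem, by extracting $\boldsymbol\Gamma$ through the projection with kernel $V$. Well-definedness and injectivity come from the full homomorphism criterion, together with the fact that an isomorphism fixing $\g$ has the form $x+r(u)+s(u)$, where $s$ is the induced, hence invertible, map on $E/\g$. Your extra check that $\equiv$ and $\approx$ are equivalence relations is correct: it uses closure of these block-triangular maps under inversion and composition, a point the paper leaves implicit.
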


\begin{proof}
Fix a complement \(V\) of \(\g\) in \(E\).  For arbitrary
\(x_i\in\g\) and \(u_i\in V\), the full realization theorem assigns
a unique full datum \(\boldsymbol\Gamma\) to the bracket on \(E\).
Conversely, each
\(\boldsymbol\Gamma\in\mathfrak T_n^{\mathrm{full}}(\g,V)\)
reconstructs the bracket by \eqref{eq:full-bracket}.

Suppose first that
\(\boldsymbol\Gamma\equiv\boldsymbol\Lambda\).  By definition there
are \(r:V\to\g\) and \(s\in\GL(V)\) satisfying both
\eqref{eq:full-morphism-g} and \eqref{eq:full-morphism-V}.
\Cref{thm:full-homomorphism} then gives an isomorphism
\[
\g\natprod_{\boldsymbol\Gamma}^{\mathrm{full}}V
\cong_{\g}
\g\natprod_{\boldsymbol\Lambda}^{\mathrm{full}}V.
\]
Conversely, every isomorphism fixing \(\g\) has the elementwise form
\(\psi(x+u)=x+r(u)+s(u)\); comparing its two bracket components
recovers the same two identities.  If
\(\boldsymbol\Gamma\approx\boldsymbol\Lambda\), the identical argument
uses \(s=\id_V\), which is precisely the requirement that the induced
map on \(E/\g\) be the identity.  Passing to the corresponding
quotient sets therefore gives
\[
\mathfrak T_n^{\mathrm{full}}(\g,V)/{\equiv}
\xrightarrow{\sim}\Extd_n(E,\g),
\qquad
\mathfrak T_n^{\mathrm{full}}(\g,V)/{\approx}
\xrightarrow{\sim}\Extd'_n(E,\g).
\]
\end{proof}

\subsection{Full crossed products, factorizations, and the action form}

The general non-abelian extension and factorization problems are direct
specializations of the full datum.

\begin{corollary}[Unrestricted non-abelian extensions]
\label{cor:full-crossed-extension}
Let \(\g\) and \(V\) be \(n\)-Lie algebras.  For
\(1\leq k\leq n-1\), let
\[
 \gamma_k:\bigwedge^kV\otimes
 \bigwedge^{n-k}\g\longrightarrow\g
\]
be alternating, and let \(\omega:\bigwedge^nV\to\g\).  Set
\begin{align*}
 \Gamma_k^{\g}&=\gamma_k,& \Gamma_k^V&=0
 &&(1\leq k\leq n-1),\\
 \Gamma_n^{\g}&=\omega,&
 \Gamma_n^V(u_1,\ldots,u_n)&=[u_1,\ldots,u_n]_V.
\end{align*}
Then \eqref{eq:full-bracket} defines a full crossed product precisely
when \eqref{eq:full-FI-g}--\eqref{eq:full-FI-V} hold.  Moreover, every
short exact sequence
\[
 0\longrightarrow\g\longrightarrow E\longrightarrow V
 \longrightarrow0
\]
and every linear section \(\sigma:V\to E\) yield such maps by
\begin{align}
\gamma_k(u_1,\ldots,u_k;x_{k+1},\ldots,x_n)
&=[\sigma(u_1),\ldots,\sigma(u_k),
  x_{k+1},\ldots,x_n]_E,
\label{eq:full-crossed-gamma}\\
\omega(u_1,\ldots,u_n)
&=[\sigma(u_1),\ldots,\sigma(u_n)]_E
 -\sigma([u_1,\ldots,u_n]_V).
\label{eq:full-crossed-omega}
\end{align}
The map \(x+u\mapsto x+\sigma(u)\) identifies the full crossed
product with \(E\).
\end{corollary}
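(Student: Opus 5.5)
The statement has two parts. The first is that the specialization $\Gamma_k^V=0$ for $1\le k\le n-1$, $\Gamma_n^V=[-,\ldots,-]_V$ defines an $n$-Lie algebra exactly when \eqref{eq:full-FI-g}--\eqref{eq:full-FI-V} hold. This needs no new argument: the specialization is a particular full extending datum in the sense of \Cref{def:full-datum}, so \Cref{prop:full-bracket} gives an alternating operation, and \Cref{thm:full-criterion} applies verbatim. I would also note a simplification. Since $\Gamma_k^V$ vanishes for $k<n$, we get $\mathcal H_{\boldsymbol\Gamma}(x_1,u_1;\ldots;x_n,u_n)=[u_1,\ldots,u_n]_V$. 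Hence \eqref{eq:full-FI-V} reduces to the Filippov identity of $V$ and holds automatically, exactly as in the proof of \Cref{thm:crossed-direct}. This remark is optional for the statement.

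For the second part, start from a short exact sequence $0\to\g\xrightarrow{i}E\xrightarrow{p}V\to0$ of $n$-Lie algebras and a linear section $\sigma$. The first step is to check that the maps \eqref{eq:full-crossed-gamma}--\eqref{eq:full-crossed-omega} take values in $\g$. For $\gamma_k$ with $k\le n-1$, at least one entry lies in $\g=\Ker p$, and $p$ is a homomorphism, so $p[\ldots]_E=[p\sigma(u_1),\ldots,p(x_n)]_V=0$ because $p(x_j)=0$. For $\omega$, we have $p[\sigma(u_1),\ldots,\sigma(u_n)]_E=[u_1,\ldots,u_n]_V=p\sigma([u_1,\ldots,u_n]_V)$. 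The maps are alternating in each block because the bracket of $E$ is alternating.

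The second step is to define $\Phi(x+u)=x+\sigma(u)$. This is a linear isomorphism, with inverse $e\mapsto(e-\sigma p(e))+p(e)$, as in the proof of \Cref{thm:nonabelian-crossed}. I would then expand $[x_1+\sigma(u_1),\ldots,x_n+\sigma(u_n)]_E$ by multilinearity, moving the $\sigma(u)$-entries to the front with the sign $(-1)^{\eta(I)}$ of \eqref{eq:full-transposition-count}, exactly as in the proof of \Cref{thm:full-realization}. Terms with $k=0$ give $[x_1,\ldots,x_n]_\g$. Terms with $1\le k\le n-1$ give $\gamma_k$. The $k=n$ term equals $\omega(u_1,\ldots,u_n)+\sigma([u_1,\ldots,u_n]_V)$. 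Comparing with \eqref{eq:full-G}--\eqref{eq:full-H} for the specialized datum then yields $[\Phi(\cdot),\ldots,\Phi(\cdot)]_E=\Phi([\cdot,\ldots,\cdot]_{\boldsymbol\Gamma})$. Note that $\Phi$ maps the $V$-component $[u_1,\ldots,u_n]_V$ to $\sigma([u_1,\ldots,u_n]_V)$, which is precisely why $\omega$ subtracts that term.

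The third step is to transport the Filippov identity. Since $\Phi$ is a bracket-preserving linear bijection onto the $n$-Lie algebra $E$, the bracket $[\cdot,\ldots,\cdot]_{\boldsymbol\Gamma}$ satisfies \eqref{eq:FI}. By \Cref{thm:full-criterion}, the datum therefore satisfies \eqref{eq:full-FI-g}--\eqref{eq:full-FI-V}, so it is a full crossed product identified with $E$ by $\Phi$.

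The only delicate point is the sign bookkeeping in the second step. It is already handled by the transposition count used in \Cref{prop:full-bracket}, so I would simply cite that computation rather than redo it.
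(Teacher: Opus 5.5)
Your proposal is correct and follows essentially the same route as the paper. You extract the components through the section, check that they land in $\g=\Ker p$, expand $[x_1+\sigma(u_1),\ldots,x_n+\sigma(u_n)]_E$ with the signs $(-1)^{\eta(I)}$ to see that $\Phi(x+u)=x+\sigma(u)$ preserves brackets, and transport the Filippov identity of $E$ back through \Cref{thm:full-criterion}. Your extra remark that $\mathcal H_{\boldsymbol\Gamma}=[u_1,\ldots,u_n]_V$, so that \eqref{eq:full-FI-V} reduces to the Filippov identity of $V$, is correct; the paper makes this observation only in the sparse case (\Cref{thm:crossed-direct}), and it additionally records the converse that every full crossed product is an extension, which the statement does not require.
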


\begin{proof}
Let \(x,x_i\in\g\) and \(u,u_i\in V\).  Since \(\g=\ker p\) is an
ideal and \(p\sigma=\id_V\),
\[
\g=\ker(E\xrightarrow{p}V),\qquad
[\g,E,\ldots,E]_E\subseteq\g,\qquad p\sigma=\id_V.
\]
For every \(1\leq k\leq n-1\), the mixed bracket contains a
\(\g\)-entry, and therefore
\[
p[\sigma(u_1),\ldots,\sigma(u_k),x_{k+1},\ldots,x_n]_E=0.
\]
\[
\Gamma_k^V=0,\qquad
\Gamma_k^{\g}=\gamma_k
\quad(1\leq k\leq n-1).
\]
For the pure \(V\)-tuple \(u_1,\ldots,u_n\), split its bracket into
kernel and section components:
\begin{align*}
[\sigma(u_1),\ldots,\sigma(u_n)]_E
&=\bigl([\sigma(u_1),\ldots,\sigma(u_n)]_E
-\sigma([u_1,\ldots,u_n]_V)\bigr)\\
&\quad+\sigma([u_1,\ldots,u_n]_V)\\
&=\omega(u_1,\ldots,u_n)
+\sigma(\Gamma_n^V(u_1,\ldots,u_n)).
\end{align*}
\[
\Gamma_n^{\g}=\omega,\qquad
\Gamma_n^V=[- ,\ldots,-]_V.
\]
Define \(\Phi(x+u)=x+\sigma(u)\).  The full multilinear expansion on
arbitrary \(x_1+u_1,\ldots,x_n+u_n\) gives
\[
\Phi(x+u)=x+\sigma(u),
\]
\begin{align*}
&[\Phi(x_1+u_1),\ldots,\Phi(x_n+u_n)]_E\\
&=\Phi\!\left(
\mathcal G_{\boldsymbol\Gamma}(x_1,u_1;\ldots;x_n,u_n)
+\mathcal H_{\boldsymbol\Gamma}(x_1,u_1;\ldots;x_n,u_n)
\right).
\end{align*}
Thus \(\Phi\) preserves the bracket.  Applying \eqref{eq:FI}
in \(E\) to the outer elements
\(x_1+\sigma(u_1),\ldots,x_{n-1}+\sigma(u_{n-1})\) and the inner
elements \(y_1+\sigma(v_1),\ldots,y_n+\sigma(v_n)\), and comparing
the two direct-sum components, gives
\eqref{eq:full-FI-g} and \eqref{eq:full-FI-V}.
Conversely, in any full crossed product the inclusion and projection
satisfy
\[
\begin{gathered}
i_{\g}(x)=x+0,\qquad p_V(x+u)=u,\qquad \ker p_V=\g,\\
0\longrightarrow\g\xrightarrow{i_{\g}}
\g\natprod_{\boldsymbol\Gamma}^{\mathrm{full}}V
\xrightarrow{p_V}V\longrightarrow0.
\end{gathered}
\]
\end{proof}

\begin{corollary}[Unrestricted factorizations]
\label{cor:full-factorization}
Assume that \(\Gamma_n^{\g}=0\) and that
\(\Gamma_n^V=[- ,\ldots,-]_V\) is an \(n\)-Lie bracket.  Then a full
extending structure makes both \(\g\) and \(V\) subalgebras of
\(\g\natprod_{\boldsymbol\Gamma}^{\mathrm{full}}V\).  Conversely, every
factorization \(E=\g\oplus V\) by two \(n\)-Lie subalgebras determines
unique full mixed components \(\Gamma_k^{\g},\Gamma_k^V\) for
\(1\leq k\leq n-1\), and
\(E\cong\g\natprod_{\boldsymbol\Gamma}^{\mathrm{full}}V\).
\end{corollary}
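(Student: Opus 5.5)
The plan is to reduce both directions to the full realization theorem (\Cref{thm:full-realization}) and to the uniqueness part of \Cref{prop:full-bracket}, keeping track of which homogeneous components control the subalgebra property of each factor.

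For the first assertion, let \(\boldsymbol\Gamma\) be a full extending structure with \(\Gamma_n^{\g}=0\) and \(\Gamma_n^V=[-,\ldots,-]_V\). That \(\g\) is a subalgebra is built in: setting all \(u_i=0\) in \eqref{eq:full-G}--\eqref{eq:full-H} leaves only the \(k=0\) terms, and \eqref{eq:full-degree-zero} gives \([x_1,\ldots,x_n]_{\boldsymbol\Gamma}=[x_1,\ldots,x_n]_{\g}\). For \(V\), set all \(x_i=0\). Only the \(k=n\) summand survives, because every \(\Gamma_k^\bullet\) with \(k<n\) has at least one \(\g\)-argument. The \(k=n\) summand has \(I=\{1,\ldots,n\}\), so \(\eta(I)=0\), and it gives \([u_1,\ldots,u_n]_{\boldsymbol\Gamma}=\Gamma_n^{\g}(u)+\Gamma_n^V(u)=[u_1,\ldots,u_n]_V\in V\). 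Hence \(V\) is closed under the bracket. The restricted bracket agrees with \([-,\ldots,-]_V\), and it satisfies the Filippov identity because it is the restriction of an \(n\)-Lie bracket. This is consistent with the hypothesis that \([-,\ldots,-]_V\) is already an \(n\)-Lie bracket, though that hypothesis is not needed beyond the identification.

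For the converse, let \(E=\g\oplus V\) be a factorization by two \(n\)-Lie subalgebras. Take the projection \(p:E\to\g\) along \(V\), so \(V=\Ker p\) and \(q=\id_E-p\), and define \(\Gamma_k^{\g},\Gamma_k^V\) by \eqref{eq:full-extract-g}--\eqref{eq:full-extract-V}. Then \Cref{thm:full-realization} says these maps form a full extending structure, and that \(x+u\mapsto x+u\) is an isomorphism \(\g\natprod^{\mathrm{full}}_{\boldsymbol\Gamma}V\cong E\) fixing \(\g\) and inducing the identity on \(E/\g\). It remains to check the degree-\(n\) boundary. Since \(V\) is a subalgebra, \([u_1,\ldots,u_n]_E\in V\), so \(\Gamma_n^{\g}=p[u_1,\ldots,u_n]_E=0\) and \(\Gamma_n^V=q[u_1,\ldots,u_n]_E=[u_1,\ldots,u_n]_E\), which is the \(n\)-Lie bracket of the subalgebra \(V\). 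Uniqueness of the mixed components for \(1\le k\le n-1\) follows from the uniqueness clause of \Cref{prop:full-bracket}: each component is the restriction of \(\operatorname{pr}_{\g}B\) or \(\operatorname{pr}_VB\) to \(\bigwedge^kV\otimes\bigwedge^{n-k}\g\), so it is determined by the bracket and the fixed decomposition.

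I expect no substantial obstacle. The most delicate point is the bookkeeping showing that setting all \(x_i=0\) kills every \(k<n\) term, which uses multilinearity of the \(\Gamma_k^\bullet\) in their \(\g\)-slots. A second point is that uniqueness must be understood relative to the fixed complement \(V\), which here is forced because \(V\) is itself one of the two factors.
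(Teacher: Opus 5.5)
Your proof is correct and follows essentially the same route as the paper: for the forward direction you read off the pure brackets $[x_1,\ldots,x_n]=\Gamma_0^{\g}$ and $[u_1,\ldots,u_n]=\Gamma_n^{\g}+\Gamma_n^V=[u_1,\ldots,u_n]_V$. For the converse you extract the components by projection and rely on the full multilinear expansion, which you invoke through \Cref{thm:full-realization} and the uniqueness clause of \Cref{prop:full-bracket} rather than rewriting it. You also explicitly check that the extracted degree-$n$ components satisfy $\Gamma_n^{\g}=0$ and $\Gamma_n^V=[-,\ldots,-]_V$, a point the paper leaves implicit.
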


\begin{proof}
Let \(u_1,\ldots,u_n\in V\) and \(x_1,\ldots,x_n\in\g\).  Under the
stated boundary conditions, their pure brackets are
\[
[u_1,\ldots,u_n]_{\boldsymbol\Gamma}
=\Gamma_n^{\g}(u_1,\ldots,u_n)
 +\Gamma_n^V(u_1,\ldots,u_n)
=[u_1,\ldots,u_n]_V\in V,
\]
\[
[x_1,\ldots,x_n]_{\boldsymbol\Gamma}
=\Gamma_0^{\g}(x_1,\ldots,x_n)
=[x_1,\ldots,x_n]_{\g}\in\g.
\]
Thus both factors are subalgebras.  Conversely, suppose
\(E=\g\oplus V\) is a factorization.  For
\(u_1,\ldots,u_k\in V\) and
\(x_{k+1},\ldots,x_n\in\g\), take the two components
\[
E=\g\oplus V,\qquad \g,V\leq E.
\]
\begin{align*}
\Gamma_k^{\g}
&=\operatorname{pr}_{\g}
 [-,\ldots,-]_E|_{\wedge^kV\otimes\wedge^{n-k}\g},\\
\Gamma_k^V
&=\operatorname{pr}_{V}
 [-,\ldots,-]_E|_{\wedge^kV\otimes\wedge^{n-k}\g},
\qquad 1\leq k\leq n-1.
\end{align*}
The full multilinear expansion on arbitrary \(x_i+u_i\) is
\[
[x_1+u_1,\ldots,x_n+u_n]_E
=\mathcal G_{\boldsymbol\Gamma}
 (x_1,u_1;\ldots;x_n,u_n)
+\mathcal H_{\boldsymbol\Gamma}
 (x_1,u_1;\ldots;x_n,u_n),
\]
Restriction to each mixed degree proves that the datum is unique.
The preceding expansion therefore gives
\(E\cong\g\natprod_{\boldsymbol\Gamma}^{\mathrm{full}}V\).
\end{proof}

Finally, the action form of the main text is recovered without changing
any sign convention.

\begin{proposition}[Extreme-degree truncation]
\label{prop:full-to-action}
Let all full components of degrees \(2,\ldots,n-2\) vanish and make the
identifications
\begin{align}
 \Gamma_1^{\g}(u;x_2,\ldots,x_n)
 &=\vartheta_V(u)(x_2,\ldots,x_n),
\label{eq:full-identify-one-g}\\
 \Gamma_1^V(u;x_2,\ldots,x_n)
 &=(-1)^{n-1}\rho_{\g}(x_2,\ldots,x_n)u,
\label{eq:full-identify-one-V}\\
 \Gamma_{n-1}^{\g}(u_1,\ldots,u_{n-1};x)
 &=\rho_V(u_1,\ldots,u_{n-1})x,
\label{eq:full-identify-last-g}\\
 \Gamma_{n-1}^V(u_1,\ldots,u_{n-1};x)
 &=\vartheta_{\g}(x)(u_1,\ldots,u_{n-1}),
\label{eq:full-identify-last-V}\\
 \Gamma_n^{\g}(u_1,\ldots,u_n)
 &=\omega(u_1,\ldots,u_n),
\label{eq:full-identify-n-g}\\
 \Gamma_n^V(u_1,\ldots,u_n)
 &=\{u_1,\ldots,u_n\}_V.
\label{eq:full-identify-n-V}
\end{align}
Then the full bracket \eqref{eq:full-bracket} is exactly the action-form
bracket \eqref{eq:action-direct-expansion}.  Under these
identifications, \eqref{eq:full-FI-g}--\eqref{eq:full-FI-V} become
\eqref{eq:direct-FI-g}--\eqref{eq:direct-FI-V}.  For \(n=3\), the
interval \(2\leq k\leq n-2\) is empty, so every full datum is already
of action form.
\end{proposition}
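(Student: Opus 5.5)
The plan is to compare the two brackets summand by summand and then transport the Filippov criterion through that identification. For the bracket, I would specialize the general expansion \eqref{eq:full-G}--\eqref{eq:full-H} of \Cref{prop:full-bracket}. Once all components of degrees $2\le k\le n-2$ vanish, only the index sets $I$ with $|I|\in\{0,1,n-1,n\}$ survive. I would treat each of these four cases separately and record the sign $(-1)^{\eta(I)}$ in each.

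\emph{Case $k=0$.} There is only one term, $[x_1,\ldots,x_n]_{\g}$ in the $\g$-component and $0$ in the $V$-component, by \eqref{eq:full-degree-zero}.

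\emph{Case $k=1$, $I=\{i\}$.} Here $\eta(I)=i-1$. By \eqref{eq:full-identify-one-g} the $\g$-part is $(-1)^{i-1}\vartheta_V(u_i)(x_1,\ldots,\widehat{x_i},\ldots,x_n)$, which is the corresponding summand of \eqref{eq:G-component}. By \eqref{eq:full-identify-one-V} the $V$-part is $(-1)^{i-1}(-1)^{n-1}\rho_{\g}(\ldots)u_i=(-1)^{n-i}\rho_{\g}(\ldots)u_i$, which matches \eqref{eq:H-component}.

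\emph{Case $k=n-1$, $I^c=\{i\}$.} Then $\eta(I)=\sum_{a<i}0+\sum_{a\ge i}1=n-i$. By \eqref{eq:full-identify-last-g} and \eqref{eq:full-identify-last-V} this gives exactly the $\rho_V$ summand of \eqref{eq:G-component} and the $\vartheta_{\g}$ summand of \eqref{eq:H-component}.

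\emph{Case $k=n$.} The sign is $+1$, and the term contributes $\omega(u_1,\ldots,u_n)+\{u_1,\ldots,u_n\}_V$.

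Adding the four cases gives $\mathcal G_{\boldsymbol\Gamma}=G$ and $\mathcal H_{\boldsymbol\Gamma}=H$ identically in all arguments. This matches the homogeneous values \eqref{eq:def-rho-actions}--\eqref{eq:def-pure-V}, including the convention that middle mixed brackets are zero.

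The second claim then follows formally. The quantities $\mathcal G_0,\mathcal H_0,\mathcal G_i,\mathcal H_i$ of \eqref{eq:full-inner-zero}--\eqref{eq:full-inner-i} are obtained by evaluating $\mathcal G_{\boldsymbol\Gamma},\mathcal H_{\boldsymbol\Gamma}$ at the same arguments used for $G_0,H_0,G_i,H_i$ in \eqref{eq:G0}--\eqref{eq:Hi}. Since the functions agree, $\mathcal G_0=G_0$, $\mathcal H_0=H_0$, $\mathcal G_i=G_i$ and $\mathcal H_i=H_i$. Substituting these into both sides of \eqref{eq:full-FI-g}--\eqref{eq:full-FI-V}, and using the equality of functions once more for the outer evaluations, turns them literally into \eqref{eq:direct-FI-g}--\eqref{eq:direct-FI-V}. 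The statement for $n=3$ is immediate: the range $2\le k\le 1$ is empty, so no component is discarded, and \Cref{prop:full-bracket} shows that every alternating extension is then of this six-map form.

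The only real obstacle is sign bookkeeping. The point to verify carefully is the case $k=1$, where the extra factor $(-1)^{n-1}$ built into \eqref{eq:full-identify-one-V} must combine with $\eta=i-1$ to give $(-1)^{n-i}$. The other point is the case $k=n-1$, where one must check that removing index $i$ from the front block costs exactly $n-i$ transpositions. I would verify both directly from \eqref{eq:full-transposition-count}.
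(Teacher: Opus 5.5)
Your proposal is correct and is essentially the paper's proof: you keep only the index sets with $|I|\in\{0,1,n-1,n\}$, use $\eta(I)=i-1$ and $\eta(I)=n-i$ to match the summands of $G$ and $H$ term by term, and then transport $\mathcal G_{\boldsymbol\Gamma}=G$, $\mathcal H_{\boldsymbol\Gamma}=H$ into the two Filippov component identities. Like the paper, you tacitly need $n\geq 3$, so that the degrees $1$ and $n-1$ are distinct and the identifications for $\Gamma_1$ and $\Gamma_{n-1}$ neither clash nor get counted twice.
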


\begin{proof}
Take \(x_1,\ldots,x_n\in\g\) and \(u_1,\ldots,u_n\in V\).
All intermediate mixed contributions are zero:
\[
\Gamma_k^{\g}=\Gamma_k^V=0
\quad(2\leq k\leq n-2).
\]
For the term having the unique \(V\)-entry \(u_i\), the index set and
transposition count are
\[
k=1,\quad I=\{i\},\quad
\eta(I)=i-1.
\]
Using \eqref{eq:full-identify-one-g}--\eqref{eq:full-identify-one-V}
gives
\begin{align*}
&(-1)^{i-1}\Gamma_1^{\g}
 (u_i;x_1,\ldots,\widehat{x_i},\ldots,x_n)\\
&=(-1)^{i-1}\vartheta_V(u_i)
 (x_1,\ldots,\widehat{x_i},\ldots,x_n),\\
&(-1)^{i-1}\Gamma_1^V
 (u_i;x_1,\ldots,\widehat{x_i},\ldots,x_n)\\
&=(-1)^{i-1}(-1)^{n-1}
 \rho_{\g}(x_1,\ldots,\widehat{x_i},\ldots,x_n)u_i\\
&=(-1)^{n-i}
 \rho_{\g}(x_1,\ldots,\widehat{x_i},\ldots,x_n)u_i.
\end{align*}
For the term having the unique \(\g\)-entry \(x_i\),
\[
k=n-1,\quad I=\{1,\ldots,\widehat{i},\ldots,n\},\quad
\eta(I)=n-i.
\]
Thus \eqref{eq:full-identify-last-g}--\eqref{eq:full-identify-last-V}
give
\begin{align*}
&(-1)^{n-i}\Gamma_{n-1}^{\g}
 (u_1,\ldots,\widehat{u_i},\ldots,u_n;x_i)\\
&=(-1)^{n-i}\rho_V
 (u_1,\ldots,\widehat{u_i},\ldots,u_n)x_i,\\
&(-1)^{n-i}\Gamma_{n-1}^{V}
 (u_1,\ldots,\widehat{u_i},\ldots,u_n;x_i)\\
&=(-1)^{n-i}\vartheta_{\g}(x_i)
 (u_1,\ldots,\widehat{u_i},\ldots,u_n).
\end{align*}
The two pure degrees are
\[
\begin{gathered}
k=0:\quad
(\Gamma_0^{\g},\Gamma_0^V)=([- ,\ldots,-]_{\g},0),\\
k=n:\quad
(\Gamma_n^{\g},\Gamma_n^V)=(\omega,\{- ,\ldots,-\}_V).
\end{gathered}
\]
Adding these four surviving degrees for the arbitrary elements
\(x_i+u_i\) yields
\[
\mathcal G_{\boldsymbol\Gamma}=G,\qquad
\mathcal H_{\boldsymbol\Gamma}=H.
\]
Hence \eqref{eq:full-bracket} is exactly
\eqref{eq:action-direct-expansion}.
Substitution into the two direct Filippov component identities gives
\eqref{eq:direct-FI-g} from \eqref{eq:full-FI-g}, and
\eqref{eq:direct-FI-V} from \eqref{eq:full-FI-V}.
When \(n=3\), there is no integer \(k\) satisfying
\(2\leq k\leq n-2\), so every full datum is already of action form.
\end{proof}

\subsection{A genuinely non-reduced unified product}

The last family isolates the phenomenon that distinguishes \(n>3\)
from the ternary case.  Its only nonzero mixed bracket has an
intermediate number of complement variables.  It therefore cannot be
encoded by the six action-form maps, although it is covered by the
full unified product in the appendix.

\begin{example}[An intermediate mixed component]
\label{ex:examples-nonreduced}
Fix \(k\) with \(2\leq k\leq n-2\).  Let \(E\) have basis
\(\{p_1,\ldots,p_{n-1},w\}\) and the single nonzero bracket
\begin{equation}
 [p_1,\ldots,p_{n-1},w]=w.
\label{eq:examples-nonreduced-bracket}
\end{equation}
Set
\begin{equation}
 V_k=\operatorname{span}\{p_1,\ldots,p_k\},\qquad
 \g_k=\operatorname{span}\{p_{k+1},\ldots,p_{n-1},w\}.
\label{eq:examples-nonreduced-splitting}
\end{equation}
Then \(E=\g_k\oplus V_k\), and both factors are abelian \(n\)-Lie
algebras.  Relative to this splitting, the only nonzero full extending
component is
\begin{equation}
 \Gamma_k^{\g_k}
 (p_1,\ldots,p_k;p_{k+1},\ldots,p_{n-1},w)=w.
\label{eq:examples-nonreduced-Gamma}
\end{equation}
Consequently \(E\) is a full unified product but is not an action-form
unified product for the splitting
\eqref{eq:examples-nonreduced-splitting}.
\end{example}

\begin{proof}
Take the operator-block data
\[
 P=\operatorname{span}\{p_1,\ldots,p_{n-1}\},\qquad
W=\operatorname{span}\{w\},\qquad D(w)=w.
\]
By \Cref{prop:examples-operator-block},
\[
[p_1,\ldots,p_{n-1},w]
=\nu(p_1,\ldots,p_{n-1})D(w)=w,
\qquad
\eqref{eq:FI}_E.
\]
Because both summands in
\eqref{eq:examples-nonreduced-splitting} have dimension less than
\(n\),
\[
\dim V_k=k<n,\qquad
\dim\g_k=n-k<n,
\]
\[
\bigwedge\nolimits^nV_k=0,\qquad
\bigwedge\nolimits^n\g_k=0,\qquad
[V_k^n]=0=[\g_k^n].
\]
On the unique nonzero basis tuple, however, the two projections are
\[
\operatorname{pr}_{\g_k}
[p_1,\ldots,p_k,p_{k+1},\ldots,p_{n-1},w]=w,
\qquad
\operatorname{pr}_{V_k}[p_1,\ldots,p_{n-1},w]=0,
\]
and therefore the only nonzero full component is
\[
\Gamma_k^{\g_k}
(p_1,\ldots,p_k;p_{k+1},\ldots,p_{n-1},w)=w,
\qquad
\Gamma_j^{\g_k}=\Gamma_j^{V_k}=0\quad(j\ne k).
\]
Since \(2\leq k\leq n-2\), the action form would force this precise
mixed degree to vanish:
\[
[V_k^{\,k},\g_k^{\,n-k}]_{\mathrm{action}}=0
\ne[V_k^{\,k},\g_k^{\,n-k}]_E,
\]
which proves that
\(E=\g_k\natprod_{\boldsymbol\Gamma}^{\mathrm{full}}V_k\), while no
action datum \(\Omega\) can satisfy
\(E=\g_k\natprod_{\Omega}V_k\).
\end{proof}

\begin{example}[Smallest non-reduced degree]
\label{ex:examples-four-nonreduced}
For \(n=4\), take \(k=2\).  Then
\[
 E=\operatorname{span}\{p_1,p_2,p_3,w\},\qquad
 [p_1,p_2,p_3,w]=w,
\]
and
\[
 V_2=\operatorname{span}\{p_1,p_2\},\qquad
 \g_2=\operatorname{span}\{p_3,w\}.
\]
Both factors are abelian, while
\([V_2,V_2,\g_2,\g_2]\neq0\).  This is the first mixed degree absent
from the ternary theory.
\end{example}

\begin{remark}\label{rem:examples-summary}
The examples separate the roles of the structure maps.  In
\Cref{ex:examples-six-map} all six action-form maps may be nonzero.
\Cref{ex:examples-crossed} retains
\(\vartheta_V,\rho_V,\omega\) and realizes a non-abelian extension.
\Cref{ex:examples-matched} retains all four actions and realizes a
factorization.  \Cref{ex:examples-complements} uses the representation
\(\rho_{\g}\) and a deformation map to produce two complement types.
Finally, \Cref{ex:examples-nonreduced} is controlled by the
intermediate component \(\Gamma_k^{\g_k}\), which is the precise reason
that the full theory of \Cref{app:full} is needed when \(n>3\).
\end{remark}

\end{document}